\theoremstyle{plain} 
\newtheorem{thm}{Theorem}
\newtheorem{cor}{Corollary}
\newtheorem{prop}{Proposition}
\newtheorem*{lem0}{Lemma}
\newtheorem*{thm0}{Theorem}
\theoremstyle{definition}
\newtheorem{defn}{Definition}
\theoremstyle{remark}
\newtheorem*{remark0}{Remark}
\newcommand{\prob}{\mathsf{P}} 
\newcommand{\E}{\mathsf{E}}
\newcommand{\bin}{{\sf Bin}}
\newcommand{\unif}{{\sf Unif}}
\newcommand{\nm}{{\sf N}}
\newcommand{\chisq}{{\sf ChiSq}}
\newcommand{\A}{\mathcal{A}} 
\newcommand{\RR}{\mathbb{R}}
\newcommand{\U}{\mathcal{U}}
\newcommand{\YY}{\mathbb{Y}}
\newcommand{\UU}{\mathbb{U}}
\newcommand{\TT}{\mathbb{T}}
\newcommand{\T}{\mathcal{T}}
\newcommand{\eps}{\varepsilon}
\newcommand{\prior}{\mathsf{Q}}
\newcommand{\credal}{\mathscr{Q}}
\newcommand{\cred}{\mathscr{C}}
\newcommand{\lPi}{\underline{\Pi}}
\newcommand{\uPi}{\overline{\Pi}}
\newcommand{\pid}{\pi^{\text{\sc d}}}
\newcommand{\pic}{\pi^{\text{\sc c}}}
\newcommand{\lprior}{\underline{\mathsf{Q}}}
\newcommand{\uprior}{\overline{\mathsf{Q}}}
\newcommand{\lprob}{\underline{\mathsf{P}}}
\newcommand{\uprob}{\overline{\mathsf{P}}}
\title{Valid and efficient imprecise-probabilistic inference with partial priors, I. First results}
\author{Ryan Martin\footnote{Department of Statistics, North Carolina State University; {\tt rgmarti3@ncsu.edu}}}
\date{\today}
\begin{document}

\maketitle 

\begin{abstract}  
Between Bayesian and frequentist inference, it's commonly believed that the former is for cases where one has a prior and the latter is for cases where one has no prior.  But the prior/no-prior classification isn't exhaustive, and most real-world applications fit somewhere in between these two extremes.  That neither of the two dominant schools of thought are suited for these applications creates confusion and slows progress.  A key observation here is that ``no prior information'' actually means no prior distribution can be ruled out, so the classically-frequentist context is best characterized as {\em every prior}.  From this perspective, it's now clear that there's an entire spectrum of contexts depending on what, if any, partial prior information is available, with Bayesian (one prior) and frequentist (every prior) on opposite extremes.  This paper ties the two frameworks together by formally treating those cases where only partial prior information is available using the theory of imprecise probability. The end result is a unified framework of (imprecise-probabilistic) statistical inference with a new validity condition that implies both frequentist-style error rate control for derived procedures and Bayesian-style coherence properties, relative to the given partial prior information.  This new theory contains both the  Bayesian and frequentist frameworks as special cases, since they're both valid in this new sense relative to their respective partial priors.  Different constructions of these valid inferential models are considered, and compared based on their efficiency.  

\smallskip

\emph{Keywords and phrases:} Bayesian; coherence; consonance; frequentist; inferential model; plausibility function; possibility measure; two-theory problem.
\end{abstract}

\vfill

\pagebreak

\tableofcontents 

\pagebreak

\section{Introduction}
\label{S:intro}

Statistical inference has two dominant schools of thought: {\em Bayesian} and {\em frequentist}.  The fundamental difference between the two is that the former quantifies uncertainty about unknowns in a formal way, using classical/ordinary/precise probability theory, while the latter does so in a less formal way, focusing on procedures that suitably control error rates.  While the disputes between the two sides have now more-or-less ceased, the fact that there are still two distinct theories of statistics remains problematic for the field of statistics and for science more generally.  \citet{fraser2011, fraser2011.rejoinder, fraser.copss} commented on this a number of times (see Section~\ref{SS:big.picture} below), as did \citet{efron2013.ams}:
\begin{quote}
{\em Two contending philosophical parties, the Bayesians and the frequentists, have been vying for supremacy over the past two-and-a-half centuries...
Unlike most philosophical arguments, this one has important practical consequences. The two philosophies represent competing visions of how science progresses and how mathematical thinking assists in that progress.
}
\end{quote}
Rather than choosing a side, Fisher's fiducial argument \citep[e.g.,][]{fisher1933, fisher1935a, zabell1992, seidenfeld1992} aimed to navigate between the two, to offer a different solution to the two-theory problem. 
The consensus, however, is that Fisher's fiducial argument was a bust; but the problem he aimed to solve remains ``the most important unresolved problem in statistical inference'' \citep{efron.cd.discuss}.  Of course, there have been many new attempts, 
including generalized fiducial distributions \citep[e.g.,][]{hannig.review}, confidence distributions \citep[e.g.,][]{schweder.hjort.book}, and other ``data-dependent measures'' \citep[e.g.,][]{belitser.ddm, belitser.nurushev.uq, martin.walker.deb}, but these modern efforts focus mainly on the frequentist properties of their derived procedures, not on probabilistic reasoning and inference.  Procedures having good error rate control properties are important but, at the end of the day, having new ways to construct such procedures isn't going to resolve the two-theory problem.  As I see it, this resolution can only come from expressing what frequentists do in terms compatible with Bayesians' probabilistic reasoning.  This is what the present paper aims to do.  

Along these lines, I've recently been focusing on the construction of data-dependent, imprecise probability distributions with two key features:
\begin{itemize}
\item like a Bayesian posterior, it assigns (lower and upper) probabilities to any assertion about the unknown quantity of interest, but does so without a prior, and 
\vspace{-2mm}
\item the data-dependent (lower and upper) probabilities that users reason with to make inference are calibrated so that their inferences are reliable in a specific sense.  
\end{itemize} 
These data-dependent imprecise probabilities are referred to as {\em inferential models} (IMs), and the property in the second bullet is called {\em validity}; see \citet{imbasics,imbook}, \citet{martin.nonadditive}, and the discussion/references in Section~\ref{S:background} below. 
More recently, I've shown that statistical procedures with control on frequentist error rates correspond to valid IMs \citep{imchar}.  In other words, behind every provably reliable frequentist procedure is a valid IM, a suitably calibrated imprecise probability, based on which probabilistic reasoning can be carried out, just as I think Fisher intended.  

This aligns with what's been known for a long time: frequentists and Bayesians sit on opposite ends of a spectrum.  But this begs the question: {\em what's the spectrum on which these two are the extremes?}  An answer to this question would go a long way towards resolving the two-theory problem, as it would genuinely unify the two extremes under one single framework or perspective.  The key insight is that the frequentist context, which is often understood as one where there is ``no prior,'' is better interpreted as {\em every prior}.  That is, the lack of prior information implies that no prior distribution can be ruled out, so one is effectively considering all possible priors.  That the frequentist theory focuses on cases with a fixed unknown parameter is a consequence of those point-mass priors being the extreme points in the class of all prior distributions.  

If {\em frequentist = every prior} and {\em Bayesian = one prior}, then the aforementioned spectrum must consist of cases where genuine (partial) prior information about the unknowns is available but falls short of a complete prior distribution that the analyst is willing to believe in; see Figure~\ref{fig:diagram}.  This situation must be common in applications---it's just not discussed in the statistics literature because no one knows how to handle it.  There's one important special case of this partial prior information problem that is common in the statistics literature, namely, low-dimensional structural assumptions in high-dimensional inference problems, and I'll discuss this specifically below.  In any case, when only partial prior information is available, what should the data analyst do?  Currently, the two options are to ignore the prior information and carry out a ``frequentist'' analysis, or fill in what's missing to make a full prior distribution and carry out a ``Bayesian'' analysis?  Clearly, neither of these options is fully satisfactory, so something new is needed that apparently neither of the two existing theories can accommodate.  

\begin{figure}[t]
\begin{center}
\fbox{
\begin{tikzpicture}
\draw[red, thick] (-5,0) -- (5,0);
\filldraw[black] (-5, 0) circle (2pt) node[anchor=south]{frequentist};
\filldraw[black] (5, 0) circle (2pt) node[anchor=south]{Bayesian};
\filldraw[black] (-5, 0) circle (0pt) node[anchor=north]{every prior};
\filldraw[black] (5, 0) circle (0pt) node[anchor=north]{one prior};
\filldraw[red] (0, 0) circle (0pt) node[anchor=north]{$\gets$ some priors $\to$};
\end{tikzpicture}
}
\end{center}
\caption{Diagram showing the spectrum with the traditional frequentist and Bayesian frameworks, corresponding to {\em every prior} and {\em one prior}, respectively, at opposite ends, with ``partial priors'' (sets of prior distributions) in the middle.}
\label{fig:diagram}
\end{figure}
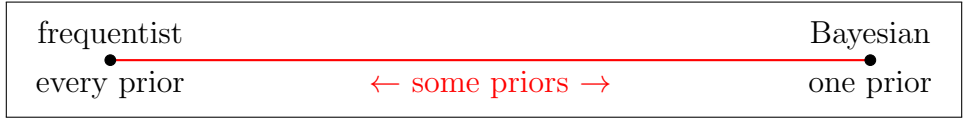

The overarching goal of this series is to develop a new and unified framework in which valid and efficient statistical inference, prediction, decision-making, model assessment, etc., can be carried out while simultaneously accommodating partial prior information.  This optimistic goal will be achieved by leveraging the previously-untapped power of {\em imprecise probability}.  The idea is that, when genuine partial prior information is available---too much to justify ignoring and carrying out a frequentist analysis but too little to justify a single prior distribution and corresponding Bayesian analysis---the new framework should be able to accommodate exactly the information available in the form of an imprecise prior distribution.  This creates opportunities for efficiency gain compared to a frequentist solution while simultaneously avoiding the risk of bias resulting from the strong assumptions that a Bayesian solution requires.  

Making the decision to handle this challenging case of partial prior information using imprecise probability is the easy part.  Valid IMs already work with imprecise probability so this is a pretty obvious idea to consider.  The next step, however, requires answers to two non-trivial questions.  In particular, the primary goal of the present paper is to answer the following two questions. 
\begin{enumerate}
\item {\em What kind of properties would one want the IM that incorporates both data and partial prior information to satisfy?} The original, vacuous-prior IM construction achieves validity and, just like how Bayes estimators are necessarily biased, the incorporation of prior information is sure to ruin validity.  So a new notion of validity is needed that acknowledges the available partial prior information. 
\vspace{-2mm}
\item {\em How to incorporate the prior information in such a way that the aforementioned validity property is satisfied?} There are so many ways this might be achieved, some starting from the basic ingredients (data, prior, etc.)~and others that aim to directly combine the prior information with a suitable vacuous-prior IM.  Which ones ensure validity and, among those, which are most efficient in some sense?  
\end{enumerate}

After a review of the basic construction, interpretation, and properties of vacuous-prior IM in Section~\ref{S:background}, I proceed to address the first question above in Section~\ref{S:partial}.  There I present the new partial-prior-dependent notion of validity, a generalization of vacuous-prior validity, and investigate its consequences.  In particular, I explore the statistical and behavioral consequences of this new notion of validity.  First, on the statistical side, the relevant consequences are sort of obvious: if the IM is valid for inference proper, then procedures derived from it ought to have the desired frequentist error rate control properties, but now with respect to the assumed partial prior information.  Next, on the behavioral side, I draw new connections between validity and the subjectivist notions of coherence/no-sure-loss, \`a la \citet{definetti1937}, that are important to Bayesians and the logic of probabilistic reasoning more generally.  That is, I show that my new validity property implies, among other things, that the IM's output avoids sure-loss in the sense of \citet{walley1991}, \citet{lower.previsions.book}, and \citet{gong.meng.update}. 

Section~\ref{S:achieving} addresses the second question above.  It turns out that there are lots of ways that the new validity property can be achieved and, since this is new territory, I take the liberty of presenting several different options, related to ideas already existing in the imprecise probability literature, including the generalized Bayes approach advocated for in \citet{walley1991} and elsewhere.  A somewhat surprising but important take-away message from Section~\ref{S:achieving} is that an IM that's valid in the original, vacuous-prior sense is also valid in this new, prior-dependent sense.  So, if the previous IM developments already cover the case with partial prior information, then what's the contribution here?  This boils down to a question about {\em why} one might want to incorporate this partial prior information.  The answer to this latter question is clear: the additional structure introduced by the partial prior ought to result in an efficiency gain.  

Section~\ref{S:efficiency} discusses a notion of efficiency and demonstrates empirically how the methods of incorporating prior information presented in Section~\ref{S:achieving} actually make the IM {\em less efficient} than the original vacuous-prior IM.  This implies alternative IM constructions are needed, ones that aim to achieve validity but with an eye towards efficiency gain.  In Section~\ref{S:other}, I consider two very natural alternative constructions---one based on Dempster's rule of combination and another based on a combination rule that I don't believe has a name---and show that these do tend to be more efficient than the other strategies discussed above.  Unfortunately, despite their strong empirical performance in my running example, I'm only able to establish a limited validity-related property for these constructions.  So it remains an open question if these IMs are viable in the sense of being both valid and more efficient than the constructions in Section~\ref{S:achieving}.  

Of course, the strategies above are not the only options and, in Section~\ref{S:validify}, I consider a different strategy: one that starts with an IM that may or may not be valid, and applies a certain transformation to {\em validify} it.  There I prove that a version of the so-called imprecise probability-to-possibility transform \citep{hose2022thesis, dubois.etal.2004} can successfully take an IM that may not be valid and transform it into one that's valid.  Given the desirable properties and empirical performance of the strategies presented in Section~\ref{S:other}, and my inability to prove that they're valid on their own, these make for natural candidates to process through this validification transformation.  Indeed, this makes my favorite of the aforementioned not-yet-provably-valid IMs provably valid and I demonstrate empirically how the validification preserves the efficiency gain.  

My primary motivation for considering the incorporation of partial prior information was the high-dimensional problems mentioned briefly above.  A classical example is inference on the mean vector of a multivariate normal distribution that's assumed to have a certain  low-dimensional structure, e.g., {\em sparsity}.  Structural assumptions like this are frequently encountered in the statistical literature and the go-to strategy is to introduce a penalty that shrinks a fully data-driven estimator towards parameter values that meet the structural constraint \citep[e.g.,][]{hastie.tibshirani.friedman.2009}.  It's often pointed out that the penalty resembles or has an effect similar to that of a prior distribution.  Alternatively, the penalty provides a data-free ranking of how plausible different values of the feature parameter are relative to the posited low-dimensional structure.  There might be lots of prior distributions that provide the same ``plausibility ranking'' and, therefore, it's arguably more natural to describe this as partial prior information.  From this perspective, the developments in the present paper shed some light on the probabilistic reasoning-based principles behind the introduction of penalties to impose structural constraints in high-dimensional inference problems.  A thorough investigation into these high-dimensional problems is beyond the scope of the present paper but, in Section~\ref{S:sparsity}, I explain the fundamental need for a new notion of validity in these problems, I briefly sketch a possible formulation of the partial prior information in this sparsity case, and I show a simple numerical illustration to highlight how one of the proposed solutions here might also be suitable for valid and efficient inference in high-dimensional problems.  

Finally, in Section~\ref{S:discuss}, I give some general concluding remarks, some open questions, and some specific recommendations for next steps.  Part~II of the series will develop the {\em validification} strategy presented in Section~\ref{S:validify} below more formally into a general framework for valid and efficient imprecise-probabilistic inference that accommodates partial prior information when available.

\section{Background}
\label{S:background}

\subsection{Setup and notation}

Let $Y$ denote observable data taking values in a sample space $\YY$; note that the sample space is general, so the data could be a vector, a matrix, etc.  There is an unknown quantity or parameter, denoted by $\theta$, taking values in a general parameter space $\TT$, about which the data $Y$ is believed to be informative.  This relationship is described in a probabilistic manner, so that the distribution of $Y$ depends on $\theta$.  With one exception (see Section~\ref{S:partial} below), all the distributions in this paper will be denoted by $\prob$, with subscripts to indicate which quantity is random.  As a first instance, let $\prob_{Y|\theta}$ denote the conditional distribution of $Y$, given $\theta$; that is, $B \mapsto \prob_{Y|\theta}(B)$ is a probability measure on $\YY$ for each $\theta$ and $\theta \mapsto \prob_{Y|\theta}(B)$ is measurable for each $B$.  Examples include: $Y$ is a scalar count and $\prob_{Y|\theta}$ is a binomial distribution with a fixed number of trials $n$ and unknown success probability $\theta \in [0,1]$; $Y$ is a vector of $n$ independent and identically distributed (iid) samples from a normal distribution, $\prob_{Y|\theta}$, where $\theta=(\mu,\sigma^2)$ denotes the unknown mean and variance parameters; $Y$ is a collection of $n$ iid $d$-vectors with a Gaussian graphical model $\prob_{Y|\theta}$, where $\theta$ is the unknown positive definite precision matrix; or $Y$ is a vector of $n$ iid samples from a distribution $\prob_{Y|\theta}$ where $\theta$ denotes its density function. In any case, the goal is to quantify uncertainty about $\theta$ based on the observed data $Y=y$.  For the moment, I'm assuming that nothing about $\theta$ is known {\em a priori}, but see Section~\ref{S:partial}.  

By uncertainty quantification, here I mean having a data-dependent, probability-like structure defined on a collection $\A$ of subsets of $\TT$.  In other words, the goal is to use the data, statistical model, etc.~to construct something like a ``posterior probability distribution'' for $\theta$, that assigns (lower and upper) probabilities, or degrees of belief, to various assertions/hypotheses about the unknown $\theta$.  Further details are presented in the following subsections.  To be able to manipulate the resulting  probability-like structure akin to the way probabilities are manipulated, the collection $\A$ of relevant assertions/hypotheses needs to be sufficiently rich, so here I'll take $\A$ to be the Borel $\sigma$-algebra on $\TT$; note, in particular, that $\A$ is closed under complementation and contains all the singletons.  To each $A \in \A$, I will associate an assertion about the unknown $\theta$, i.e., both $A$ and ``$\theta \in A$'' will henceforth be referred to as {\em assertions}.   

Especially in the present case where I'm {\em a priori} ignorant about $\theta$, it's not clear how this uncertainty quantification can proceed.  In the statistics literature, the most common strategy is to proceed in a Bayesian way but with a default/non-informative prior for $\theta$ \citep[e.g.,][]{berger2006}.  Other less familiar approaches include Fisher's fiducial inference, Dempster's generalization \citep{dempster1968a, dempster1968b}, Fraser's structural inference \citep{fraser1968}, Hannig's generalized fiducial inference, and the confidence distributions championed by \citet{xie.singh.2012} and \citet{schweder.hjort.book}.  With the exception of a few cases having especially nice structure, no finite-sample guarantees concerning the operating characteristics of these methods are available, so the modern focus is on large-sample properties.  Their reliance on precise probabilities subjects them to {\em false confidence} \citep{balch.martin.ferson.2017}, which implies that inferences are unreliable---or at least at risk of being unreliable---in a certain sense at each fixed sample size.  The point is that reliable inference can't be guaranteed in general with artificial probabilities.  This risk can be avoided, however, using suitable imprecise probabilities, as I describe below.  


\subsection{Inferential models}

Following \citet{martin.nonadditive}, I define an {\em inferential model} (IM) as a mapping from data $y$, depending on statistical model and potentially other optional inputs (e.g., prior information), to a pair of lower and upper probabilities $(\lPi_y, \uPi_y)$ defined on $\A \subseteq 2^\TT$. The IM's output is the aforementioned probability-like structure to be used for uncertainty quantification and inference.  Mathematically, $\uPi_y$ is required to be a sub-additive capacity, i.e., a monotone set function, normalized to satisfy $\uPi_y(\varnothing) = 0$ and $\uPi_y(\Theta) = 1$, where sub-additivity means $\uPi_y(A \cup A') \leq \uPi_y(A) + \uPi_y(A')$ for all disjoint $A$ and $A'$.  Define the corresponding dual/conjugate lower probability 
\[ \lPi_y(A) = 1-\uPi_y(A^c), \quad A \in \A, \]
which, by sub-additivity, satisfies 
\[ \lPi_y(A) \leq \uPi_y(A), \quad A \in \A. \]
This partially explains the names ``lower'' and ``upper probabilities.'' To ensure that the IM output indeed corresponds to tight lower and upper bounds on a collection of probabilities, I'll assume that the IM output is {\em coherent} in the sense that
\[ \lPi_y(A) = \inf_{\Pi \in \cred(\uPi_y)} \Pi(A) \quad \text{and} \quad \uPi_y(A) = \sup_{\Pi \in \cred(\uPi_y)} \Pi(A), \quad A \subseteq \TT, \quad y \in \YY, \]
where $\cred(\uPi_y) = \{\Pi: \Pi(A) \leq \uPi_y(A) \text{ for all $A$}\}$ is the credal set of (data-dependent) probability measures on $\TT$ dominated by $\uPi_y$.  Finally, since I'll be interested in the distribution of the random variable $\uPi_Y(A)$, for fixed $A$, I'll assume throughout that $y \mapsto \uPi_y(A)$ is Borel measurable for all $A \in \A$.  

There's an obvious question: {\em what do these lower/upper probabilities mean, how should they be interpreted?}  In the imprecise probability literature, a behavioral interpretation is common.  Imagine a situation where, after data $Y=y$ has been observed, the value of $\theta$ will be revealed and any gambles made on the truthfulness/falsity of assertions could be settled.  Then \citet{walley1991} and others suggest the following (subjective/personal) behavioral interpretations of my (data-dependent) lower and upper probabilities:
\begin{align*}
\lPi_y(A) & = \text{my maximum buying price for \$$1(\theta \in A)$, given $y$} \\
\uPi_y(A) & = \text{my minimum selling price for \$$1(\theta \in A)$, given $y$}.
\end{align*}
Here and in what follows, $1(E)$ denotes the indicator function of the event $E$.  The idea is that, if the ``real price'' were smaller than $\lPi_y(A)$ or larger than $\uPi_y(A)$, then I'd buy or sell the gamble, respectively; otherwise, the gamble is too risky, so I'd neither buy nor sell. When $\lPi_y \equiv \uPi_y$, the buying and selling prices coincide and the above description reduces to de~Finetti's classical subjective interpretation of probability. 

In the context of statistical inference, however, the gambling terminology generally isn't used; but see \citet{crane.fpp} and \citet{shafer.betting}.  So, instead of treating the lower and upper probabilities as bounds on prices for bets, I can treat them as my own data-dependent degrees of belief about $\theta$.  That is, small $\uPi_y(A)$ suggests to me that the data $y$ doesn't support the truthfulness of the assertion $A$; similarly, large $\lPi_y(A)$ suggests to me that data $y$ does support the truthfulness of $A$; otherwise, if $\lPi_y(A)$ is small and $\uPi_y(A)$ is large, then data $y$ apparently isn't sufficiently informative to make a definitive judgment about $A$.  Like the gambler who refrains from betting in certain situations, in the latter case it's best/safest if I refrain from making inference on $A$ and consider a less complex assertion and/or collect more informative data.  

But there's still something missing here in terms of interpretation.  Clearly, the lower/upper probabilities can't be treated as ``objective'' or frequency-based in any real sense.  But they're also not ``subjective'' in the usual sense because I don't actually have beliefs about $\theta$ that are being processed and somehow quantified in the IM's output.  The objective versus subjective categorization isn't important: what's essential is that the IM output has real-world meaning.  That is, the numerical values assigned by the IM to assertions ought to affect both my opinion and the opinions of like-minded others about what's probably true and not true.  More precisely, ``$\uPi_y(A)$ is small'' certainly makes me doubt the truthfulness of $A$, but it should also have the same effect on others of like mind, i.e., those who accept the statistical model assumptions, etc.  To achieve this, a connection between the IM's lower/upper probabilities and the real world is needed, and the validity condition described below---a statistical constraint---does just that.  Circling back: what does this say about the interpretation of my IM output?  It's essential that my IM be valid, so that it's real-world relevant, but among those valid IMs, I'll pick the one that's ``best,'' striking a balance between {\em efficiency} (see Section~\ref{S:efficiency}), computational simplicity, etc.  So the numerical values my IM assigns to assertions about $\theta$ are more-or-less determined by the IM's statistical properties, since this is what will make my inferences real-world relevant.  In other words, my beliefs are what they are because my (valid) IM is ``best,''\footnote{This isn't really different from modern Bayesian statistics.  Priors are typically chosen in a ``default'' way and the posterior probabilities are never interpreted directly.  The posterior is used to construct procedures (e.g., estimators, confidence regions, etc.), and these are assessed by proving (frequentist) asymptotic consistency results and/or comparing with existing methods based on (frequentist) simulation studies.  In other words, the Bayesian's posterior probabilities are what they are because the various inputs they selected leads to procedures with good statistical properties.} akin to Lewis's {\em best-system} interpretation \citep{lewis1980, lewis1994}. 


At this point, I should emphasize that I have no formal rules in mind for the construction of the IM.  That is, it need not be based on Bayes or generalized Bayes rules, Dempster's rule of combination, etc.  These formal updating rules are appealing in many respects, but they're not without their difficulties,\footnote{The formal updating rules all involve conditional probability and, while it generally doesn't affect things in practice, there are issues with conditional probabilities not being well-defined.  If conditioning wasn't necessary, then conditional distributions not being well-defined wouldn't be an issue.} so I'll not impose any restrictions at the outset.  My focus is on the basic statistical properties we want the IM output to satisfy, so that its inferences are reliable, and then I'll assess what particular kinds of constructions can achieve it.  Interestingly, it'll be shown in Section~\ref{S:achieving} that the generalized Bayes rule does achieve my proposed notion of reliability, but it does so in a way that's not fully satisfactory.  Then I'll consider some alternatives in Section~\ref{S:other}.  


\subsection{Vacuous-prior validity}
\label{SS:no.prior}

Motivated by the behavioral interpretation, the imprecise probability literature mainly focuses on coherence, or internal rationality.  For data analysts on the front lines, the ones crunching the numbers behind real-world decisions, this internal rationality is crucial.  From the perspective of a statistician who is not on the front lines himself, but is developing methods to be used off-the-shelf by front-line data analysts, there are other considerations.  A front-line data analyst's only reason to use a method developed by someone off the front lines is that they believe it's reliable, that it's ``likely to work'' in some specific sense.  This goes beyond the internal rationality of coherence---lots of things that are coherent won't ``work''---and it's what the off-the-front-lines statistician is concerned with.  This external reliability notion is what I call {\em validity}.  


Here I'll review the notion of validity in the case where there's no prior information available; this will be generalized in Section~\ref{S:partial} below.  Recall that, for each $A \in A$, $\lPi_Y(A)$ and $\uPi_Y(A)$ are random variables as functions of $Y \sim \prob_{Y|\theta}$.  Then the IM's reliability is determined by certain features of the distribution of these random variables.  In particular, based on the interpretation of the IM's output, reliability would be questionable if 
\begin{itemize}
\item $\uPi_Y(A)$ doesn't tend to be large when $A$ is true, or
\vspace{-2mm}
\item $\lPi_Y(A)$ doesn't tend to be small when $A$ is false.
\end{itemize}
And since the IM is intended to provide ``uncertainty quantification'' in a broad sense, this notion of reliability should hold for all $A \in \A$.  So the formal property of validity, stated below, requires exactly this.

\begin{defn}
\label{def:no.prior.valid}
An IM with output $(\lPi_Y, \uPi_Y)$ is {\em valid} (with respect to the vacuous-prior model) if either---and hence both---of the following equivalent conditions hold:
\begin{align}
\sup_{\theta \not\in A} \prob_{Y|\theta}\{\lPi_Y(A) > 1-\alpha\} & \leq \alpha, \quad \text{for all $(\alpha,A) \in [0,1] \times \A$}, \notag \\
\sup_{\theta \in A} \prob_{Y|\theta}\{ \uPi_Y(A) \leq \alpha \} & \leq \alpha, \quad \text{for all $(\alpha,A) \in [0,1] \times \A$}. \label{eq:old.valid}
\end{align}
(The equivalence follows from the duality between $\lPi_Y$ and $\uPi_Y$.)
\end{defn}

To put it differently, recall that the IM's lower/upper probabilities are inherently subjective, but the goal is for them to be meaningful to others of like mind in the real world.  That is, an observation ``$\uPi_Y(A)$ is small'' should make others who are willing to accept the model assumptions, etc., inclined to think that assertion $A$ is probably false.  This can only be achieved, as Fisher explained, through an indirect argument.  According to \eqref{eq:old.valid}, ``$\uPi_Y(A)$ is small'' entails a logical disjunction: either $A$ is false or a small-probability event took place.  Since others of like mind can't be inclined to believe a small-probability event took place, they must conclude that $A$ is probably false.  This goes beyond what \citet[][p.~42]{fisher1973} describes in the context of a simple significance test---I'm focused on probabilistic uncertainty quantification, so Definition~\ref{def:no.prior.valid} concerns properties of the IM's entire lower/upper probability output.  See, also, \citet{balch2012} and, for an investigation into these and other efforts to construct calibrated belief functions for statistical inference and prediction, see \citet{denoeux.li.2018}. 

A sufficient condition for vacuous-prior validity is
\begin{equation}
\label{eq:uniform}
\sup_{\theta \in \TT} \prob_{Y|\theta}\bigl\{ \uPi_Y(\{\theta\}) \leq \alpha \bigr\} \leq \alpha, \quad \alpha \in [0,1].
\end{equation}
That is, the random variable $\uPi_Y(\{\theta\})$ is stochastically no smaller than $\unif(0,1)$ as a function of $Y \sim \prob_{Y|\theta}$, for all $\theta \in \TT$.  That this is sufficient for vacuous-prior validity is an immediate consequence of monotonicity of the upper probability, 
\[ \text{if $A \ni \theta$, then $\uPi_Y(\{\theta\}) \leq \alpha$ implies $\uPi_Y(A) \leq \alpha$}. \]
Property \eqref{eq:uniform} is roughly what \citet{walley2002} calls the {\em fundamental frequentist principle}. The only difference is that Walley's version says ``$\alpha \in [0,\bar\alpha]$,'' for some $\bar\alpha \leq 1$.  

One important consequence of  validity is that statistical procedures---e.g., hypothesis tests and confidence regions---derived from a valid IM have frequentist error rate control guarantees.  This explains Walley's choice of name for this property.  For example, if the goal is to test ``$\theta \in A$,'' then the IM-based test that rejects the hypothesis if and only if $\uPi_Y(A) \leq \alpha$ would control the Type~I error at level $\alpha$.  Furthermore, define the $y$-dependent collection $\{C_\alpha(y): \alpha \in [0,1]\}$ of subsets of $\TT$, 
\begin{equation}
\label{eq:pl.region}
C_\alpha(y) = \bigcap \{A \in \A: \lPi_y(A) > 1-\alpha\}.
\end{equation}
I'll refer to $C_\alpha(y)$ as a $100(1-\alpha)$\% {\em plausibility region}. Then validity of the IM implies that its plausibility regions are also confidence regions, i.e., 
\[ \inf_{\theta \in \TT} \prob_{Y|\theta}\{ C_\alpha(Y) \ni \theta \} \geq 1-\alpha, \quad \alpha \in [0,1]. \]
These desirable statistical properties are exact for the valid IM, while most other frameworks only guarantee such properties approximately, e.g., asymptotically.  

It's important to emphasize that validity is about more than controlling frequentist error rates.  Indeed, fiducial or default-prior Bayes solutions often have the property that credible regions derived from the posterior are (approximate) confidence regions.  However, according to the {\em false confidence theorem} in \citet{balch.martin.ferson.2017}, no IM whose output is countably additive can be valid; see, also, \citet{martin.nonadditive} and \citet{prsa.response}.  So validity requires something beyond what these more familiar statistical frameworks can provide.  A brief description of how a valid IM can be constructed is given below.

\subsection{Achieving vacuous-prior validity}
\label{SS:original.im}

One of the only IM constructions that I'm aware of that achieves vacuous-prior validity is that described first in \citet{imbasics, imcond, immarg} and later summarized in the monograph \citet{imbook}, with some new developments more recently that I'll mention later.  This construction has a familiar starting point, like that of Fisher and Dempster, but departs quickly through a novel incorporation of an imprecise probability that's designed specifically to achieve the vacuous-prior validity property above.  

Start by expressing an observation $Y \sim \prob_{Y|\theta}$ using a ``data-generating equation'' 
\begin{equation}
\label{eq:assoc.new}
Y = a(\theta, U), 
\end{equation}
where $U$ has a known distribution $\prob_U$.  This form is familiar in the context of simulating data from a known distribution.  That is, we can generate data $Y$ from $\prob_{Y|\theta}$, with known $\theta$, by first generating $U$ from distribution $\prob_U$ and then plugging the pair $(\theta,U)$ into the expression \eqref{eq:assoc.new} to get $Y$.  Any statistical model $\prob_{Y|\theta}$ can be represented by an expression like \eqref{eq:assoc.new}, but not uniquely.  This representation is also useful for the inference problem, because it's clear that uncertainty quantification about $\theta$, given an observed $Y=y$, can be recast as uncertainty quantification about the unobserved value, say, $u^\star$, of the auxiliary variable $U$.  Fisher, Dempster, and Fraser would quantify uncertainty about $u^\star$, given $Y=y$, with the same probability distribution $\prob_U$.  However, as Martin and Liu argue, the uncertainty about $u^\star$, after $Y=y$ is observed, is fundamentally different than that for the variable $U$ in \eqref{eq:assoc.new} before $Y$ is observed.  As such, they suggest a different quantification of their uncertainty about $u^\star$, an imprecise probability determined by a suitable random set $\U$, with distribution $\prob_\U$, taking values in the power set of $\UU$. 

The choice of random set $\U$ requires some care but, once chosen, the IM is readily constructed as follows.  For observed data $Y=y$ and the specified $\U$, there's a corresponding random set on the parameter space given by 
\[ \TT_y(\U) = \bigcup_{u \in \U} \{\vartheta \in \TT: y = a(\vartheta, u)\}, \quad \U \sim \prob_\U. \]
If I assume, as Martin and Liu do, that $\TT_y(\U)$ is non-empty with $\prob_\U$-probability~1 for (almost) all $y$,\footnote{When $\TT_y(\U)$ is empty with positive probability, this can often be resolved by modifying the initial formulation in \eqref{eq:assoc.new}.  This is described in Chapters~6--7 in \citet{imbook}.  In other cases, one can condition the conflict away in the spirit of Dempster, or can suitably stretch $\U$ to avoid conflict altogether \citep{leafliu2012}.} then the IM's upper probability is given by 
\[ \uPi_y(A) = \prob_\U\{ \TT_y(\U) \cap A \neq \varnothing\}, \quad A \subseteq \TT. \]
The intuition is that, if $\U$ contains ``plausible'' values of $u^\star$, then $\TT_y(\U)$ contains those parameter values that are equally ``plausible,'' given $Y=y$.  The upper probability is a special type of plausibility function, namely, the type corresponding to the distribution of a random set, which is both continuous and infinitely-alternating \citep[e.g.,][]{molchanov2005}, and the associated lower probability $\lPi_y$ is a belief function \citep[e.g.,][]{shafer1976, kohlas.monney.hints, cuzzolin.book}.  It's clear that this IM is largely determined by the user's choice of $\U \sim \prob_\U$, so that same choice must also determine whether validity holds or not.  For the relatively mild sufficient conditions on $\U \sim \prob_\U$ under which validity holds, see \citet{imbasics, imbook}.  The latter reference also contains lots of examples to illustrate this construction; see, also, \citet{martin.nonadditive}. 

While not necessary for validity, \citet{imbook} recommend the use of a random set $\U$ that's {\em nested}, i.e., it's support is nested in the sense that for any pair of sets in the support, one is a subset of the other.  This recommendation, made formal in Theorem~4.3 of \citet{imbook}, was based largely on considerations of efficiency as discussed in Section~\ref{S:efficiency} below.  If, as is often the case, nested $\U$ implies nested $\TT_y(\U)$, then the corresponding IM is a {\em consonant} belief/plausibility function or, equivalently, a {\em possibility measure} \citep[e.g.,][]{dubois2006, dubois.prade.book, destercke.dubois.2014}.  The key feature of a possibility measure is that the IM is completely determined by its associated {\em plausibility contour} function 
\[ \pi_y(\vartheta) = \prob_\U\{ \TT_y(\U) \ni \vartheta \}, \quad \vartheta \in \TT, \]
which is an ordinary point function, much simpler than a general set function.  The characterization of $\uPi_y$ in terms of the plausibility contour is given by
\[ \uPi_y(A) = \sup_{\vartheta \in A} \pi_y(\vartheta), \quad A \subseteq \TT. \]
As an analogy, in classical probability theory, a density or mass function is integrated over the set $A$ to calculate probabilities; here, upper probabilities $\uPi_y(A)$ are calculated by maximizing the contour function $\pi_y$ over $A$. 

\citet{imchar} recently recognized the importance of consonance for both the IM construction and, more generally, for valid statistical inference.  A slightly more direct construction of a valid (and consonant) IM can be achieved through the arguments in \citet{imposs}; see, also, \citet{hose2022thesis}. This possibility-theoretic perspective on the IM construction fits nicely with the ``generalized'' IM framework first presented in \citet{plausfn, gim} and more recently applied in \citet{immeta, imcens} and \citet{imconformal, impred.isipta, cella.martin.imrisk}. 

\section{Validity with partial priors}
\label{S:partial}

\subsection{Definitions}
\label{SS:def}

The main objective of this paper is to extend the IM construction and the vacuous-prior validity property to the case where partial prior information about $\theta$ is available.  First, I need to describe what I mean by ``partial prior information'' and how it's encoded. 

Suppose prior information about $\theta$ is available in the form of a (closed and convex) credal set $\credal$ of prior distributions $\prior$ on $\TT$.  As is customary, when I'm referring to a $\theta$ that's to be interpreted as a random variable having distribution, say, $\prior$, I'll use the upper-case font $\Theta$. Roughly speaking, the ``size'' of $\credal$ controls the prior information's precision, with $\credal = \{\prior\}$ being the most precise and $\credal = \{\text{all probability distributions}\}$ being the least.  These two extreme credal sets are special: the former is classical Bayes while the latter aligns with the frequentist setup.  

I'm particularly interested here in cases between the two extremes, where the credal set is neither a singleton nor the set of all distribution.  There are lots of examples like this, two of these are presented in Section~\ref{SS:running} and \ref{S:sparsity}.  
The relevant question to be considered in this section is how vacuous-prior validity as described above should be modified to account for the availability of this partial prior information.  A formal definition of validity, more general than that in Definition~\ref{def:no.prior.valid}, is given below, along with its  consequences. 

Before getting into these details, some additional notation is needed.  If $\prob_{Y|\theta}$ is the conditional distribution of $Y$, given $\Theta=\theta$, and $\prior$ is a prior distribution for $\Theta$, let $\prob_{Y|\prior}(\cdot) = \int \prob_{Y|\theta}(\cdot) \, \prior(d\theta)$ denote the corresponding marginal distribution of $Y$.  Moreover, let $\prior(\cdot \mid y)$ denote the conditional distribution of $\Theta$, given $Y=y$, obtained by Bayes's rule applied to prior $\Theta \sim \prior$ and likelihood $\prob_{Y|\theta}$.  This setup defines a collection of joint distributions, indexed by the credal set $\credal$, so let $\uprob_\credal$ denote the upper envelope.  That is, if $E$ is any (appropriately measurable) joint event about $(Y,\Theta)$, then the upper probability $\uprob_\credal(E)$ can be expressed more concretely as 
\begin{align}
\uprob_\credal(E) & = \sup_{\prior \in \credal} \iint 1\{(y,\theta) \in E\} \, \prob_{Y|\theta}(dy) \, \prior(d\theta) \notag \\ 
& = \sup_{\prior \in \credal} \iint 1\{(y,\theta) \in E\} \, \prior(d\theta \mid y) \, \prob_{Y|\prior}(dy). \label{eq:total.prob} 
\end{align}
Similarly, there is a corresponding lower probability, $\lprob_\credal$, with the supremum above replaced with an infimum, but this will rarely be needed in what follows.

When partial prior information is available, it makes sense for this to be incorporated into the IM construction, so that $(\lPi_y, \uPi_y)$ depend on $\credal$ in some way.  But remember that I'm not requiring the IM to be obtained based on any formal rule, e.g., conditioning, for updating prior information about $\theta$ in light of the observation $Y=y$.  These updating rules are certainly candidates for the IM construction, and I'll consider these in more detail below.  Again, my primary objective is to achieve a suitable notion of validity, or reliability, so I don't limit my options concerning the IM construction until after the precise goal has been identified.  So, $(\lPi_{y,\credal},\uPi_{y,\credal})$ is just a suitable pair of capacities depending on data $y$ and partial prior $\credal$.  

As discussed above, the data analyst will use his data-dependent lower and upper probabilities to make judgments about various relevant assertions concerning the unknown $\theta$.  Still, large values of $\lPi_{Y,\credal}(A)$ support the truthfulness of $A$ and small values $\uPi_{Y,\credal}(A)$ support the truthfulness of $A^c$. Consequently, the events 
\[ \{(y,\theta): \uPi_{y,\credal}(A) \leq \alpha, \, \theta \in A\} \quad \text{and} \quad \{(y,\theta): \lPi_{y,\credal}(A) > 1-\alpha, \, \theta \not\in A\} \]
in the joint $(Y,\Theta)$-space are cases when incorrect or erroneous conclusions might be made, so the new definition of validity is designed to ensure that these undesirable events are suitably rare, thus making the IM's uncertainty quantification reliable.  

\begin{defn}
\label{def:valid}
Let $\A' \subseteq \A$ be a sub-collection of assertions.  An IM with output $(\lPi_{Y,\credal}, \uPi_{Y,\credal})$ is {\em $\A'$-valid}, relative to $\credal$, if either---and hence both---of the following equivalent conditions holds:
\begin{align}
\uprob_\credal\bigl\{ \uPi_{Y,\credal}(A) \leq \alpha, \, \Theta \in A \bigr\} & \leq \alpha, \quad \text{for all $(\alpha,A) \in [0,1] \times \A'$}, \label{eq:valid.up} \\
\uprob_\credal\bigl\{ \lPi_{Y,\credal}(A) \geq 1-\alpha, \, \Theta \not\in A \bigr\} & \leq \alpha, \quad \text{for all $(\alpha,A) \in [0,1] \times \A'$}. \label{eq:valid.lo}
\end{align}
If an IM is $\A'$-valid with respect to $\credal$ for $\A' = \A$, then I'll simply say it's {\em valid}. 
\end{defn}

This definition is clearly in the same spirit as Definition~\ref{def:no.prior.valid}, the main difference here being that the ``probability'' is based on considering all possible joint distributions of $(Y,\Theta)$ consistent with the statistical model $\prob_{Y|\theta}$ and the (genuine) partial prior information encoded in $\credal$.  The other difference is that I'm relaxing the requirement that the bounds hold uniformly in $\A$, allowing them to hold only on a sub-collection $\A' \subseteq \A$.  The reason for this relaxation is a technical one---for certain IMs I'm currently only able to establish the bounds above for a proper sub-collection $\A'$.  This relaxation, however, should not be seen as license to tailor the IM construction to a very narrow, specific sub-collection $\A'$; at least not to the extent that the IM is effectively useless for assertions $A \not\in \A'$.  Remember, the goal still is valid uncertainty quantification in a broad sense, so I want $\A'$ to be as close to $\A$ as possible. 

That Definition~\ref{def:valid} is a generalization of Definition~\ref{def:no.prior.valid} can be seen by considering the case where $\credal$ is the set of all probability distributions on $\TT$.  In that case, validity in the sense of Definition~\ref{def:valid} reduces to Definition~\ref{def:no.prior.valid}.  

While the validity condition in Definition~\ref{def:valid} seems strong in the sense that it requires the $\uprob_\credal$-probability bounds (\ref{eq:valid.up}--\ref{eq:valid.lo}) to hold for all assertions $A$, there is another sense in which it is too weak.  In a gambling scenario, the agent will advertise his buying and selling prices based on his specified IM $(\lPi_{Y,\credal}, \uPi_{Y,\credal})$, depending on data $Y$, and his opponents can decide what, if any, transactions they'd like to make.  If the opponents also have access to data $Y$, then of course they will use that information to make a strategic choice of $A$ in order to beat the agent.  If the opponents can use data-dependent assertions, then it's not enough to consider the assertion-wise guarantees provided by Definition~\ref{def:valid}, some kind of uniformity in $A$ is required. This scenario is not so far-fetched.  Imagine a statistician who's developing a method for the applied data analyst to use.  If the statistician can prove that his method satisfies (\ref{eq:valid.up}--\ref{eq:valid.lo}), then his method is reliable for any fixed $A$.  But what if the data analyst peeks at the data for guidance about relevant assertions?  Without some uniformity, validity cannot be ensured in such cases.  With this in mind, consider the following stronger notion of validity.

\begin{defn}
\label{def:strong}
An IM with output $(\lPi_{Y,\credal}, \uPi_{Y,\credal})$ is {\em strongly valid}, relative to $\credal$, if one---and hence both---of the following equivalent conditions holds:
\begin{align}
\uprob_\credal\{ \uPi_{Y,\credal}(A) \leq \alpha \text{ for some $A \ni \Theta$} \} & \leq \alpha, \quad \text{for all $\alpha \in [0,1]$} \label{eq:strong.up} \\
\uprob_\credal\{ \lPi_Y(A) \geq 1-\alpha \text{ for some $A \not\ni \Theta$} \} & \leq \alpha, \quad \text{for all $\alpha \in [0,1]$}. \label{eq:strong.lo}
\end{align}
\end{defn}

Strong validity is mathematically stronger than validity in the sense of Definition~\ref{def:valid} because the ``for some $A \ni \Theta$'' amounts to taking a union over all $A$ that contain $\Theta$.  Therefore, the event on the left-hand side of \eqref{eq:strong.up} is much larger than the corresponding event in \eqref{eq:valid.up}, so the bound of the former by $\alpha$ implies the same of the latter.  Since the aforementioned union would generally be uncountable, there is no obvious guarantee that it would be measurable; if it's not measurable, then obviously the statements in \eqref{eq:strong.up} and \eqref{eq:strong.lo} are meaningless.  However, the following lemma gives a mathematically equivalent---and measurable---version of the events in \eqref{eq:strong.up} and \eqref{eq:strong.lo}, which eliminates the aforementioned measurability concerns.  It also provides a simpler condition to check.   

\begin{lem0}
For an IM with output $(\lPi_{Y,\credal}, \uPi_{Y,\credal})$, define its plausibility contour as 
\begin{equation}
\label{eq:contour}
\pi_{y,\credal}(\vartheta) = \uPi_{y,\credal}(\{\vartheta\}), \quad \vartheta \in \TT. 
\end{equation}
Then strong validity in the sense of Definition~\ref{def:strong} is equivalent to 
\begin{equation}
\label{eq:strong.alt}
\uprob_\credal\{\pi_{Y,\credal}(\Theta) \leq \alpha\} \leq \alpha, \quad \alpha \in [0,1]. 
\end{equation}
\end{lem0}

\begin{proof}
That the conditions \eqref{eq:strong.up} and \eqref{eq:strong.lo} are equivalent is a consequence of the duality between lower and upper probabilities.  So here I'll show that the two events 
\begin{align*}
E_1 & = \{(y,\theta): \text{$\uPi_{y,\credal}(A) \leq \alpha$ and $\theta \in A$ for some $A$}\} \\
E_2 & = \{(y,\theta): \pi_{y,\credal}(\theta) \leq \alpha\}
\end{align*}
are the same, by proving $E_1 \supseteq E_2$ and $E_1 \subseteq E_2$. First it is easy to see that $E_1 \supseteq E_2$ since, if $(y,\theta) \in E_2$, then $A$ can be taken as $A=\{\theta\}$.  Next, to show that $E_1 \subseteq E_2$, recall that the upper probability is monotone: if $A \subseteq B$, then $\uPi_{y,\credal}(A) \leq \uPi_{y,\credal}(B)$ for all $y$.  If $(y,\theta) \in E_1$, then there is a set $A$ such that $\uPi_{y,\credal}(A) \leq \alpha$ and $\theta \in A$.  By monotonicity, it follows that $\pi_{y,\credal}(\theta) := \uPi_{y,\credal}(\{\theta\}) \leq \alpha$; therefore, $(y,\theta) \in E_2$ and, hence, $E_1 \subseteq E_2$.  If the two events are the same, then of course the probabilities in \eqref{eq:strong.alt} and \eqref{eq:strong.up} are the same, which proves the claim. 
\end{proof}

Condition \eqref{eq:strong.alt} also sheds light on what mathematical form the IM output likely needs to have in order to achieve strong validity.  Indeed, since \eqref{eq:strong.alt} says that the random variable $\pi_{Y,\credal}(\Theta)$ is stochastically no smaller than $\unif(0,1)$, as a function of $(Y,\Theta)$, this might not hold if the plausibility contour couldn't reach values arbitrarily close to 1.  While not strictly impossible for other models, those IMs whose upper probabilities take the form of a consonant plausibility function, or possibility measure, are those where, by definition, the contour can take values close to 1.  In what follows, I'll focus on these consonant IMs when considering strong validity.


\subsection{Statistical implications}

Following the logic laid out above and in Section~\ref{SS:no.prior}, a very basic requirement is that validity ought to imply that statistical procedures derived from the IM have certain desirable operating characteristics.  In particular, in order for the validity property to have any ``reliability'' implications, it ought to ensure that the derived procedures have error rate control guarantees.  Theorem~\ref{thm:error} below makes this precise.

\begin{thm}
\label{thm:error}
Let $(\lPi_{Y,\credal}, \uPi_{Y,\credal})$ be an IM depending on the partial prior $\credal$.  Then the following error rate control properties hold.  
\begin{enumerate}
\item If the IM is valid in the sense of Definition~\ref{def:valid}, then the test {\em reject hypothesis ``$\theta \in A$'' if and only if $\uPi_{Y,\credal}(A) \leq \alpha$} has size $\alpha$, i.e., it satisfies  
\begin{equation}
\label{eq:size}
\uprob_\credal\{ \text{\em test rejects and $\Theta \in A$} \} \leq \alpha. 
\end{equation}
\item If the IM is strongly valid in the sense of Definition~\ref{def:strong}, then the set $C_\alpha(y)$ defined in \eqref{eq:pl.region} is a $100(1-\alpha)$\% confidence region, i.e., it satisfies 
\begin{equation}
\label{eq:coverage}
\uprob_\credal\{C_\alpha(Y) \not\ni \Theta\} \leq \alpha. 
\end{equation}
\end{enumerate}
\end{thm}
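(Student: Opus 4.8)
The plan is to handle the two parts separately, each reducing to a single application of the matching validity condition after the relevant event has been rewritten in the form appearing in Definition~\ref{def:valid} or Definition~\ref{def:strong}.

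For part~1, I would simply note that the proposed test rejects ``$\theta \in A$'' exactly on the event $\{\uPi_{Y,\credal}(A) \leq \alpha\}$, so the joint event ``test rejects and $\Theta \in A$'' is literally $\{\uPi_{Y,\credal}(A) \leq \alpha, \, \Theta \in A\}$. Then \eqref{eq:size} is nothing more than a restatement of the ordinary-validity bound \eqref{eq:valid.up} for this particular pair $(\alpha, A)$. No further argument is needed; this part is immediate once the test is unfolded.

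For part~2, I would first translate the non-coverage event into the ``for some $A$'' form used in strong validity. Unfolding the definition \eqref{eq:pl.region} of $C_\alpha(y)$, the event $\{\Theta \notin C_\alpha(Y)\}$ occurs if and only if there exists $A \in \A$ with $\lPi_{Y,\credal}(A) > 1-\alpha$ and $\Theta \notin A$. This is exactly the event in \eqref{eq:strong.lo}, save for the strict inequality ``$> 1-\alpha$'' in place of ``$\geq 1-\alpha$.'' Since $\{\lPi_{Y,\credal}(A) > 1-\alpha\} \subseteq \{\lPi_{Y,\credal}(A) \geq 1-\alpha\}$, the non-coverage event is contained in the strong-validity event, so monotonicity of $\uprob_\credal$ together with \eqref{eq:strong.lo} delivers the coverage bound \eqref{eq:coverage}.

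The one point I would treat carefully, and the closest thing to an obstacle, is measurability of the non-coverage event, since as written it is an uncountable union over $A$. Here I would lean on the equivalence proved in the Lemma: by duality and monotonicity of $\uPi_{y,\credal}$, the ``for some $A \not\ni \Theta$'' event coincides with the point-contour event, namely $\{\pi_{Y,\credal}(\Theta) < \alpha\}$ with $\pi_{y,\credal}(\vartheta) = \uPi_{y,\credal}(\{\vartheta\})$ as in \eqref{eq:contour}, which is measurable because $\pi_{y,\credal}$ is an ordinary measurable point function. Thus the non-coverage event is contained in $\{\pi_{Y,\credal}(\Theta) \leq \alpha\}$, and the contour form \eqref{eq:strong.alt} of strong validity gives $\uprob_\credal\{\pi_{Y,\credal}(\Theta) \leq \alpha\} \leq \alpha$ directly. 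Either route---through \eqref{eq:strong.lo} or through \eqref{eq:strong.alt}---closes the argument, and the contour route has the advantage of making the measurability concern disappear entirely.
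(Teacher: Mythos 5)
Your proof is correct and follows essentially the same route as the paper's: Part~1 is read off directly from \eqref{eq:valid.up}, and Part~2 unfolds the non-coverage event into ``$\lPi_{Y,\credal}(A) \geq 1-\alpha$ for some $A \not\ni \Theta$'' and bounds it via \eqref{eq:strong.lo}. Your additional care with the strict-versus-nonstrict inequality and with measurability (routed through the contour equivalence \eqref{eq:strong.alt} of the Lemma) only tightens points the paper passes over silently.
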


\begin{proof}
Part~1 follows immediately from \eqref{eq:valid.up}. For Part~2, note that the event where $C_\alpha(Y)$ misses $\Theta$ can be written as 
\begin{align*}
C_\alpha(Y) \not\ni \Theta & \iff \Theta \not\in \textstyle\bigcap \{A: \lPi_{Y,\credal}(A) \geq 1-\alpha\} \\
& \iff \Theta \in \textstyle\bigcup \{A^c: \lPi_{Y,\credal}(A) \geq 1-\alpha\} \\
& \iff \text{$\Theta \in A^c$ and $\lPi_{Y,\credal}(A) \geq 1-\alpha$ for some $A$}.
\end{align*}
The right-most event can be rewritten as 
\[ \text{$\lPi_{Y,\credal}(A) \geq 1-\alpha$ for some $A \not\ni \Theta$}. \]
Since the IM is assumed to be strongly valid, it follows from \eqref{eq:strong.lo} that this event has probability no more than $\alpha$, which proves \eqref{eq:coverage}. 
\end{proof}

For some intuition behind Theorem~\ref{thm:error}, consider two important (extreme) special cases corresponding to the traditional frequentist and Bayes approaches.  For the frequentist case, where $\credal$ is all possible distributions on $\TT$, \eqref{eq:coverage} immediately reduces to the familiar non-coverage probability bound, $\sup_\theta \prob_{Y|\theta}\{C_\alpha(Y) \not\ni \theta\} \leq \alpha$, which is satisfied if $C_\alpha$ is a $100(1-\alpha)$\% confidence region in the traditional sense.  Next, for the purely Bayes case, where $\credal$ is a singleton $\{\prior\}$, $\uprob_\credal$ corresponds to a specific joint distribution of $(Y,\Theta)$ and \eqref{eq:coverage} is the condition automatically satisfied when $C_\alpha$ is the $100(1-\alpha)$\% Bayesian posterior credible region; see Corollary~\ref{cor:suff} below and the related discussion. 

From the lemma in Section~\ref{SS:def}, the $100(1-\alpha)$\% plausibility region $C_\alpha(Y)$ for $\Theta$ in \eqref{eq:pl.region} can be re-expressed in more convenient terms 
\begin{equation}
\label{eq:pl.region.alt}
C_\alpha(y) = \{\vartheta \in \TT: \pi_{y,\credal}(\vartheta) > \alpha\}. 
\end{equation}
Next is an immediate consequence of Theorem~\ref{thm:error}.2 and the lemma. 

\begin{cor}
\label{cor:strong}
If the IM is strongly valid in the sense of Definition~\ref{def:strong} or, equivalently, if its plausibility contour satisfies \eqref{eq:strong.alt}, then the set $C_\alpha(Y)$ in \eqref{eq:pl.region.alt} is a $100(1-\alpha)$\% confidence region in the sense that it satisfies \eqref{eq:coverage}.  
\end{cor}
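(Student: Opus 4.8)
The plan is to treat this corollary exactly as advertised, as a bookkeeping exercise that glues together the Lemma of Section~\ref{SS:def} and Part~2 of Theorem~\ref{thm:error}, with the contour reformulation \eqref{eq:pl.region.alt} doing the real work. First I would dispose of the ``or, equivalently'' clause: the Lemma already states that strong validity in the sense of Definition~\ref{def:strong} holds if and only if \eqref{eq:strong.alt} holds, so the two hypotheses in the corollary are literally the same condition and there is nothing further to prove there.

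The cleanest route to \eqref{eq:coverage}, and the one I would actually write, bypasses the intersection form \eqref{eq:pl.region} entirely and works directly with the contour form \eqref{eq:pl.region.alt}. By definition of $C_\alpha(y)$ in \eqref{eq:pl.region.alt}, a point $\vartheta \in \TT$ fails to lie in $C_\alpha(y)$ precisely when $\pi_{y,\credal}(\vartheta) \leq \alpha$; applied at $\vartheta = \Theta$ this gives the set identity $\{C_\alpha(Y) \not\ni \Theta\} = \{\pi_{Y,\credal}(\Theta) \leq \alpha\}$ in the joint $(Y,\Theta)$-space. Taking $\uprob_\credal$ of both sides and invoking \eqref{eq:strong.alt} then yields $\uprob_\credal\{C_\alpha(Y) \not\ni \Theta\} = \uprob_\credal\{\pi_{Y,\credal}(\Theta) \leq \alpha\} \leq \alpha$, which is exactly \eqref{eq:coverage}.

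For completeness, and to connect with Theorem~\ref{thm:error}.2 (which is phrased for the intersection form \eqref{eq:pl.region}), I would reprise the monotonicity argument from the Lemma's proof to check that the two descriptions of $C_\alpha$ agree. A point $\vartheta$ fails to lie in $\bigcap\{A: \lPi_{y,\credal}(A) > 1-\alpha\}$ iff some $A$ has $\lPi_{y,\credal}(A) > 1-\alpha$ with $\vartheta \notin A$; setting $B = A^c$ and using $\lPi_{y,\credal}(A) = 1-\uPi_{y,\credal}(B)$, this is the existence of $B \ni \vartheta$ with $\uPi_{y,\credal}(B) < \alpha$, which by monotonicity of $\uPi_{y,\credal}$ reduces to $\uPi_{y,\credal}(\{\vartheta\}) = \pi_{y,\credal}(\vartheta) < \alpha$. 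So the intersection form equals $\{\vartheta: \pi_{y,\credal}(\vartheta) \geq \alpha\}$, which differs from \eqref{eq:pl.region.alt} only on the boundary $\{\pi_{y,\credal}(\vartheta) = \alpha\}$. This boundary convention is the only delicate point in the whole argument, and it is exactly what the direct route avoids: strong validity is tuned in the Lemma to the non-strict sublevel event $\{\pi_{Y,\credal}(\Theta) \leq \alpha\}$, which matches the strict-inequality region \eqref{eq:pl.region.alt} on the nose. I therefore anticipate no genuine obstacle—the proof is a chain of definitional unpackings—other than being careful to invoke \eqref{eq:strong.alt} against the correctly matched ($\geq$ versus $>$) version of the non-coverage event.
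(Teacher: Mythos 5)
Your proposal is correct, and your primary argument is essentially the paper's own proof unwound: the paper disposes of Corollary~\ref{cor:strong} in one line, as ``an immediate consequence of Theorem~\ref{thm:error}.2 and the lemma,'' and chasing through that chain (non-coverage $\iff$ $\lPi_{Y,\credal}(A) \geq 1-\alpha$ for some $A \not\ni \Theta$ $\iff$ $\pi_{Y,\credal}(\Theta) \leq \alpha$, then apply \eqref{eq:strong.lo}/\eqref{eq:strong.alt}) lands exactly on your direct identity $\{C_\alpha(Y) \not\ni \Theta\} = \{\pi_{Y,\credal}(\Theta) \leq \alpha\}$ followed by \eqref{eq:strong.alt}.

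One thing in your write-up is a genuine improvement over the paper's presentation rather than mere completeness. The paper treats \eqref{eq:pl.region.alt} as a straight ``re-expression'' of \eqref{eq:pl.region}, but, as your monotonicity computation shows, the intersection form actually equals $\{\vartheta: \pi_{y,\credal}(\vartheta) \geq \alpha\}$, which contains the strict-inequality region \eqref{eq:pl.region.alt} and can be strictly larger on the boundary $\{\pi_{y,\credal} = \alpha\}$. Since coverage of a larger set does not transfer to a subset, the route ``Theorem~\ref{thm:error}.2 for the intersection form, then substitute the claimed set equality'' is not quite airtight as stated; it is your direct invocation of \eqref{eq:strong.alt}---whose non-strict event $\{\pi_{Y,\credal}(\Theta) \leq \alpha\}$ is precisely the non-coverage event of the strict region \eqref{eq:pl.region.alt}---that makes \eqref{eq:coverage} hold on the nose. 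So your ``for completeness'' paragraph is not redundant bookkeeping: it documents exactly why the direct route is the right one, and it quietly patches the $\geq$-versus-$>$ mismatch that the paper glosses over.
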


\subsection{Behavioral implications}

Validity not only has implications for the operating characteristics of procedures derived from the IM, it also has behavioral implications.  Towards this, below I show that if an IM experiences (a weak form of) {\em contraction}, i.e., $\lPi_{y,\credal}(A)$ or $\uPi_{y,\credal}(A)$ uniformly larger or smaller, respectively, than all of the $\prior(A)$ values, then it can't be valid in the sense of Definition~\ref{def:valid}.  An important consequence of this result is that {\em validity implies no sure-loss}.  Avoiding sure-loss is related to the aforementioned coherence properties \citep[e.g.,][Ch.~6--7]{walley1991}, as I describe below.  This establishes a new perspective on validity compared to what had been discussed in previous works.  This helps solidify the intuition that a procedure which is externally reliable shouldn't be internally irrational.  

First a bit of additional notation and terminology.  For the (closed and convex) set $\credal$ of prior distributions $\prior$, define 
\[ \lprior(A) = \inf_{\prior \in \credal} \prior(A) \quad \text{and} \quad \uprior(A) = \sup_{\prior \in \credal} \prior(A), \quad A \in \A. \]
These are {\em a priori} bounds on the buying and selling prices for gambles of the form \$$1(\theta \in A)$, or on degrees of belief in the assertion ``$\theta \in A$.''  If I consider the IM with $y$-dependent output $(\lPi_y, \uPi_y)$ as an ``update''\footnote{The quotation marks around ``update'' are meant to make clear that there is no single, formal updating rule being considered here for the IM construction.  So, in principle, I can't rule out the case that the IM construction ignores the prior entirely, in which it wouldn't make sense to describe the IM construction as an ``update.'' The results here, however, establish that IMs that ignore the prior information may not be valid, which is fully expected.} of the prior beliefs based on the observation $Y=y$, then I'll say that the IM {\em contracts} at $A$ if 
\begin{equation}
\label{eq:contraction}
\lprior(A) < \inf_{y \in \YY} \lPi_{\credal}(A) \leq \sup_{y \in \YY} \uPi_{y,\credal}(A) < \uprior(A). 
\end{equation}
In words, contraction implies that, regardless of what data is observed, the posterior beliefs become more precise.  It's the ``regardless of what's observed'' part of this that's problematic, since it suggests a logical inconsistency, i.e., one should've started with more precise prior beliefs if the updates become more precise no matter what is observed.  This contraction property and its consequences were discussed recently in \citet{gong.meng.update}.  Of course, the same problems are encountered even if the contraction property only happens on one side.  I'll say that the IM has one-sided contraction if
\begin{equation}
\label{eq:contraction.onesided}
\inf_{y \in \YY} \lPi_{y,\credal}(A) > \lprior(A) \quad \text{or} \quad \sup_{y \in \YY} \uPi_{y,\credal}(A) < \uprior(A), \quad \text{for some $A \in \A$}. 
\end{equation}
The intuition behind this is as follows.  Suppose, for example, that the second inequality holds for some $A$, so that $\uPi_y(A)$ is strictly less than $\uprior(A)$ for all observations $y$.  In this case, you know that the price at which I'm willing to sell gambles on the event ``$\Theta \in A$'' will go down as soon as $Y=y$ is observed, no matter what $y$ is, so you'll surely be in a better position if you wait until $y$ is revealed to make your purchase.  This doesn't guarantee that I lose money, but the amount I win is strictly less if you purchase the gamble after $Y$ is observed instead of before. It'd be silly for me to give you a risk-free strategy to improve your circumstances, hence I should avoid the case in \eqref{eq:contraction.onesided}. 

Extreme forms of the one-sided contraction are 
\begin{equation}
\label{eq:sure.loss}
\sup_{y \in \YY} \uPi_{y,\credal}(A) < \lprior(A) \quad \text{and} \quad \inf_{y \in \YY} \lPi_{y,\credal}(A) > \uprior(A). 
\end{equation}
These cases correspond to what's called {\em sure-loss} \citep[e.g.,][Def.~3.3]{gong.meng.update}, since this severe inconsistency between the prior $\credal$ and IM output $(\lPi_{y,\credal}, \uPi_{y,\credal})$ could be exploited by an opponent to force the user who adopts these as bounds on prices for gambles to lose money.  Given that sure-loss is an especially egregious logical violation, it's a pleasure to see that, as I show below, {\em validity implies no sure-loss}. 

It turns out, however, that a notion of contraction that's weaker than those discussed above leads to incompatibility with validity.  I'll say that a partial-prior IM with lower and upper probability output $(\lPi_{y,\credal}, \uPi_{y,\credal})$, depending on the credal set $\credal$ having envelopes $\lprior$ and $\uprior$, {\em almost surely contracts at $A$} if one---and hence both---of the following equivalent conditions holds:
\begin{align}
\prob_{Y|\theta}\{ \uPi_{Y,\credal}(A) \leq \alpha \} & = 1 \quad \text{for some $\alpha < \uprior(A)$ and all $\theta \in A$} \label{eq:pcontract} \\
\prob_{Y|\theta}\{ \lPi_{Y,\credal}(A) \geq 1 - \alpha\} & = 1 \quad \text{for some $\alpha < \uprior(A^c)$ and all $\theta \in A^c$}. \notag
\end{align}
The equivalence of the two statements follows from the duality between $\lPi_{Y,\credal}$ and $\uPi_{Y,\credal}$.  I'll focus on the version \eqref{eq:pcontract} because I prefer to explain things in terms of the upper probability.  To compare with contraction proper, note that \eqref{eq:pcontract} still refers to an unexpected case where the IM's upper probability is ``always'' less than the prior upper probability, but it doesn't require uniformity, no supremum over data sets $y$ is required.  Since validity is a probabilistic notion, it makes intuitive sense that validity can be broken by a high-probability logical violation.  The following result establishes that almost-sure contraction is incompatible with validity in the sense of Definition~\ref{def:valid}.

\begin{thm}
\label{thm:no.sure.loss}
Given $\credal$ with lower and upper envelopes $\lprior$ and $\uprior$, let $(\lPi_{Y,\credal}, \uPi_{Y,\credal})$ denote the partial prior IM's lower and upper probability output.  Suppose that the IM experiences almost sure contraction at $A$ in the sense of \eqref{eq:pcontract}.  Then the IM isn't $\A'$-valid with respect to $\credal$ for any $\A' \ni A$. 
\end{thm}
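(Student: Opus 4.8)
The plan is to show directly that the assertion-wise validity bound \eqref{eq:valid.up} fails at the very assertion $A$ where almost-sure contraction occurs, for a suitable choice of $\alpha$. Since $\A'$-validity for any $\A' \ni A$ requires \eqref{eq:valid.up} to hold at $(\alpha, A)$ for every $\alpha \in [0,1]$, exhibiting a single failing $\alpha$ is enough to rule out $\A'$-validity.

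First I would fix the threshold $\alpha_0 < \uprior(A)$ furnished by the almost-sure contraction hypothesis \eqref{eq:pcontract}, so that $\prob_{Y|\theta}\{\uPi_{Y,\credal}(A) \leq \alpha_0\} = 1$ for every $\theta \in A$. I then consider the joint event
\[ E = \{(y,\theta): \uPi_{y,\credal}(A) \leq \alpha_0, \; \theta \in A\}, \]
which is measurable by the standing assumption that $y \mapsto \uPi_{y,\credal}(A)$ is Borel and that $A \in \A$. The key computation is to evaluate $\uprob_\credal(E)$ using the first representation in \eqref{eq:total.prob}, integrating against $\prob_{Y|\theta}$ first and then against the prior. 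For each fixed $\theta \in A$, the inner $\prob_{Y|\theta}$-integral of $1\{\uPi_{y,\credal}(A) \leq \alpha_0\}$ equals $1$ by almost-sure contraction; for $\theta \notin A$, the factor $1\{\theta \in A\}$ makes the inner integral $0$. Hence the double integral collapses to $\prior(A)$ for each $\prior \in \credal$, and taking the supremum gives $\uprob_\credal(E) = \uprior(A)$.

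With this in hand the conclusion is immediate: validity at $(\alpha_0, A)$ would demand $\uprob_\credal(E) \leq \alpha_0$, yet $\uprob_\credal(E) = \uprior(A) > \alpha_0$ by the choice of $\alpha_0$. Thus \eqref{eq:valid.up} is violated at $(\alpha_0, A)$, so the IM cannot be $\A'$-valid for any $\A'$ containing $A$. The dual statement in \eqref{eq:pcontract} would yield the same conclusion through \eqref{eq:valid.lo} by the duality noted immediately after \eqref{eq:pcontract}, so there is nothing further to check.

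The argument is essentially a one-line computation, so I do not anticipate a serious obstacle; the only point requiring care is the order of integration in \eqref{eq:total.prob}. The clean route uses the $\prob_{Y|\theta}$-first factorization, where almost-sure contraction forces the inner integral to be identically $1$ on $A$ and the outer integral to recover $\prior(A)$; attempting the Bayes factorization (prior-predictive first, then posterior) would obscure this and is best avoided.
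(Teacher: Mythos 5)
Your proposal is correct and follows essentially the same argument as the paper: both fix the $(\alpha, A)$ pair from the almost-sure contraction hypothesis, evaluate $\uprob_\credal\{\uPi_{Y,\credal}(A) \leq \alpha, \, \Theta \in A\}$ via the conditional-on-$\theta$ factorization in \eqref{eq:total.prob}, observe that the integrand is identically $1$ on $A$, and conclude that the joint upper probability equals $\uprior(A) > \alpha$, contradicting \eqref{eq:valid.up}. The only difference is presentational—you spell out the measurability of the event and the choice of integration order, which the paper leaves implicit.
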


\begin{proof}
Take the case where $(A,\alpha)$ satisfies \eqref{eq:pcontract}.  For this $(A,\alpha)$ pair, I get 
\[ \uprob_\credal\{\uPi_{Y,\credal}(A) \leq \alpha, \, \Theta \in A\} = \sup_{\prior \in \credal} \int_A \prob_{Y|\theta}\{\uPi_{Y,\credal}(A) \leq \alpha\} \, \prior(d\theta). \]
The integrand on the right-hand side is constant equal to 1 and, therefore, 
\[ \uprob_\credal\{\uPi_{Y,\credal}(A) \leq \alpha, \, \Theta \in A\} = \uprior(A) > \alpha, \]
which implies \eqref{eq:valid.up} fails.  
\end{proof}

It is clear that almost sure contraction---\eqref{eq:pcontract} and its equivalent---is implied by the more familiar notions of contraction discussed above.  In particular, sure-loss in the sense of \eqref{eq:sure.loss} implies \eqref{eq:pcontract} and, Theorem~\ref{thm:no.sure.loss}, the conclusion {\em validity implies no sure-loss}.  But it should be kept in mind that the result is considerably stronger than this.  Validity implies that even a much weaker logical inconsistency, a probabilistic notion of contraction is avoided.  Moreover, that the proof above is basically just one line would suggest that there's a still weaker notion of contraction that is enough to break validity. 

It'll help to draw some connections to the more classical notions of coherence in, e.g., \citet[][Ch.~6--7]{walley1991}.  Walley's Section~6.5.2 gives a pair of (necessary and) sufficient conditions, which he denotes as (C8) and (C9), for a (unconditional, conditional) upper probability pair to be coherent.  In my notation, (C8) is equivalent to the right-most inequality in \eqref{eq:contraction.onesided} {\em failing}, i.e., that $\sup_{y \in \YY}\lPi_{y,\credal}(A) \geq \uprior(A)$ for all $A$.  Condition (C9) is more complicated but, in my notation, reduces to 
\[ \lprior(A) \leq \max\Bigl\{\lPi_{y,\credal}(A), \sup_{x \neq y} \uPi_{x,\credal}(A) \Bigr\}, \quad \text{for all $y \in \YY$ and all $A \subseteq \TT$}. \]
The intuition behind this latter condition is as follows.  If it fails, then there exists a $(y,A)$ pair such that you can sell me a gamble on ``$\Theta \in A$'' for $\lprior(A)$ and then:
\begin{itemize}
\item if $Y \neq y$, then you buy it back from me for $\uPi_Y(A) < \lprior(A)$, so I lose, or
\vspace{-2mm}
\item if $Y=y$, then you do nothing, which effectively forces me to pay more than $\lPi_y(A)$, my advertised largest buying price.
\end{itemize} 
It would often be the case that $y \mapsto \uPi_{y,\credal}(A)$ is continuous, in which case the above condition reduces to $\lprior(A) \leq \sup_{y \in \YY} \uPi_{y,\credal}(A)$, which is implied by the valid IM's no-sure-loss.  Therefore, in many cases, the valid IM would also be coherent in Walley's sense, if only trivially.  Since there are cases \citep[e.g.,][Sec.~6.5.4]{walley1991} in which generalized Bayes (see Section~\ref{SS:gbayes}) is the only IM construction that is coherent, one can't expect that Walley's coherence property can be established for a general IM construction. 

An important example where contraction would be a concern is in the vacuous-prior setting where $\credal$ consists of all priors, so that $\lprior(A) \equiv 0$ and $\uprior(A) \equiv 1$ for all $A$.  In this case, at least intuitively, any IM whose output, say $\Pi_y$, is an ordinary probability distribution experiences contraction---because the vacuous prior is converted into a precise ``posterior'' with values $\Pi_y(A)$ between 0 and 1---and, therefore, is at risk of not being valid relative to the the vacuous prior according to Theorem~\ref{thm:no.sure.loss}.  This suggests a new perspective on the false confidence theorem in \citet{balch.martin.ferson.2017}.  Briefly, the false confidence theorem states that IMs whose output is a probability distribution can't be valid in the vacuous-prior sense of Definition~\ref{def:no.prior.valid}. 

\begin{cor}[A weak false confidence theorem]
\label{cor:false.confidence}
Consider the case where the prior is vacuous, so that $\lprior(A) = 0$ and $\uprior(A) = 1$ for all $A$, and suppose that the IM's output is a probability measure $\Pi_Y$.  Suppose that there exists $A \subseteq \TT$ and $\alpha < 1$ such that 
\[ \prob_{Y|\theta}\{ \Pi_Y(A) \leq \alpha \} = 1 \quad \text{for all $\theta \in A$}. \]
Then the IM fails to be valid relative to the vacuous $\credal$ in the sense of Definition~\ref{def:valid} and, hence, also in the sense of Definition~\ref{def:no.prior.valid}.
\end{cor}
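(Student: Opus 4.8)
The plan is to recognize the corollary's hypothesis as nothing more than a special instance of almost-sure contraction in the sense of \eqref{eq:pcontract}, and then to apply Theorem~\ref{thm:no.sure.loss} directly. No new machinery is needed; the entire content is a translation of the stated condition into the language of the preceding theorem, followed by the observation that, under a vacuous prior, validity in the sense of Definition~\ref{def:valid} coincides with validity in the sense of Definition~\ref{def:no.prior.valid}.

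Concretely, I would proceed as follows. First, since the IM's output is a (precise) probability measure $\Pi_Y$, its upper and dual lower probabilities collapse onto $\Pi_Y$ itself: by additivity of $\Pi_Y$ one has $\lPi_{Y,\credal}(A) = 1 - \uPi_{Y,\credal}(A^c) = 1 - \Pi_Y(A^c) = \Pi_Y(A) = \uPi_{Y,\credal}(A)$, so in particular $\uPi_{Y,\credal}(A) = \Pi_Y(A)$. Second, under the vacuous prior we have $\uprior(A) = 1$ for every $A$, so the hypothesized bound $\alpha < 1$ is exactly $\alpha < \uprior(A)$. Third, substituting $\uPi_{Y,\credal} = \Pi_Y$, the assumed condition $\prob_{Y|\theta}\{\Pi_Y(A) \leq \alpha\} = 1$ for all $\theta \in A$ is verbatim the almost-sure contraction condition \eqref{eq:pcontract} for this $(A,\alpha)$ pair. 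Theorem~\ref{thm:no.sure.loss} then immediately yields that the IM is not $\A'$-valid with respect to the vacuous $\credal$ for any $\A' \ni A$; in particular, taking $\A' = \A$, it fails to be valid in the sense of Definition~\ref{def:valid}. Finally, because Definition~\ref{def:valid} reduces to Definition~\ref{def:no.prior.valid} when $\credal$ is the set of all priors (as noted after Definition~\ref{def:valid}), failure of the former entails failure of the latter, completing the argument.

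I do not expect any genuine obstacle here: the result is a one-line consequence of Theorem~\ref{thm:no.sure.loss}, and the only care required is bookkeeping. Specifically, I would make sure to justify that the precise output forces $\uPi_{Y,\credal}(A) = \Pi_Y(A)$ (so that the contraction hypothesis, stated in terms of $\Pi_Y$, really does match the upper-probability form in \eqref{eq:pcontract}), and to note that the strict inequality $\alpha < 1 = \uprior(A)$ is precisely what \eqref{eq:pcontract} demands. If one preferred a fully self-contained argument, one could alternatively reproduce the short computation in the proof of Theorem~\ref{thm:no.sure.loss}: for the given $(A,\alpha)$, $\uprob_\credal\{\Pi_Y(A) \leq \alpha,\, \Theta \in A\} = \sup_{\prior \in \credal} \int_A \prob_{Y|\theta}\{\Pi_Y(A) \leq \alpha\}\, \prior(d\theta) = \sup_{\prior \in \credal} \prior(A) = \uprior(A) = 1 > \alpha$, so \eqref{eq:valid.up} fails. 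Either route establishes the claim.
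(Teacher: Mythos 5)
Your proof is correct and takes essentially the same route as the paper: the corollary is stated without a separate proof precisely because, as you observe, its hypothesis (with $\alpha < 1 = \uprior(A)$ and $\uPi_{Y,\credal} = \Pi_Y$ for a precise output) is verbatim the almost-sure contraction condition \eqref{eq:pcontract}, so Theorem~\ref{thm:no.sure.loss} applies immediately. Your final step, that failure of validity in the sense of Definition~\ref{def:valid} under the vacuous $\credal$ entails failure in the sense of Definition~\ref{def:no.prior.valid}, is exactly the reduction the paper notes after Definition~\ref{def:valid}, so nothing is missing.
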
 

If, e.g., the collection $\{\Pi_y: y \in \YY\}$ of probability distributions on $\TT$ is tight in the sense of, e.g., \citet[][p.~380]{billingsley}, which automatically holds if $\TT$ is compact, then there exists $A \subset \TT$ with $\alpha := \sup_{y \in \YY} \Pi_y(A) < 1$.  Then the conditions of Corollary~2 are satisfied with this pair $(A,\alpha)$ and, therefore, the IM $y \mapsto \Pi_y$ is not valid. 

It's important to emphasize that the version of the false confidence theorem in \citet{balch.martin.ferson.2017} is stronger than that above.  The principle reason is that the original false confidence theorem doesn't require any contraction-related assumptions, they use the structure of the countably-additive IM to show that the assertions having a problematic contraction-like property always exist.  So the value I see to the above discussion is that it successfully connects the false confidence phenomenon, which is very much frequentist in spirit, to the behavioral considerations which are almost exclusively Bayesian.  Perhaps that ``still weaker notion of contraction'' I referred to above would make the connection to the false confidence phenomenon even tighter.




\subsection{Big-picture implications}
\label{SS:big.picture}

P-values, statistical significance, and other aspects of modern statistical practice have recently come under fire for their alleged contribution to the replication crisis in science \citep[e.g.,][]{nuzzo2014, wasserstein.lazar.asa, wasserstein.schirm.lazar.2019}. My opinion is that the replication crisis has very little to do with statistical practice, but, at least superficially, these are attacks on the foundations of statistics so the statistical community must respond.  And, inevitably, the {\em frequentist vs.~Bayesian} debates re-emerge.  

All sorts of recommendations have been put forth,\footnote{The specific details of these recommendations aren't relevant to the discussion here, so I'll just refer the reader to, e.g., the 2019 special issue of {\em The American Statistician} (\url{https://www.tandfonline.com/toc/utas20/73/sup1}) on ``Statistical Inference in the 21st Century: A World Beyond $p < 0.05$.''} including suggestions to ban p-values and statistical significance from the scientific literature, replacing frequentist with Bayesian reasoning.  While no consensus has been reached, there is common ground: a general lack of understanding or appreciation of uncertainty, variability, etc.~is a major contributor to the replication crisis.  But what can/should be done about it?  The American Statistical Association's President recently charged a task force to prepare an official statement on statistical significance and replicability, and on this point of ``understanding uncertainty,'' the statement\footnote{\url{https://magazine.amstat.org/blog/2021/08/01/task-force-statement-p-value/}} reads
\begin{quote}
{\em Much of the controversy surrounding statistical significance can be dispelled through a better appreciation of uncertainty, variability, ...} 
\end{quote}
Unfortunately, they give no explanation of what ``a better appreciation of uncertainty'' means, so it's not at all clear how their target audience of non-statisticians can make this actionable.  Furthermore, in their section that lists strategies for dealing with uncertainty, they trip up when they lump together frequentist and Bayesian approaches for dealing with uncertainty as if the two are interchangeable.  If there's ever an instance when differences between Bayesian and frequentist can be ignored, it's {\em not} when providing general guidance to non-statisticians on statistical practice and appreciation of uncertainty!  So the two-theory problem plagues the statistical community, and the following (paraphrased) passage from \citet{fraser2011.rejoinder} seems especially relevant:
\begin{quote}
{\em As a modern discipline statistics has inherited two prominent approaches to the analysis of models with data ... How can a discipline, central to science and to critical thinking, have two methodologies, two logics, two approaches that frequently give substantially different answers to the same problems. Any astute person from outside would say, ``Why don’t they put their house in order?'' ...
Of course, the two approaches have been around since 1763 and 1930 with regular disagreement and yet no sense of urgency to clarify the conflicts. And now even a tired discipline can just ask, ``Who wants to face those old questions?'': a fully understandable reaction! But is complacency in the face of contradiction acceptable for a central discipline of science?}
\end{quote} 

Arguably, the statistical community gave up on ``putting their house in order'' because there were no new ideas.  But I believe that the {\em imprecise probability + validity} perspective advocated here is genuinely new and can resolve these important open problems.  Clearly, everyone wants their inferences to be valid relative to what they're willing to assume about the structure of their problem.   To achieve this, the data analyst has only two concrete tasks to complete.  First, he writes down a model for $(Y,\Theta)$ that contains exactly what he's willing to assume about the joint distribution.  If he's willing to assume a single joint distribution for $(Y,\Theta)$, then he's in a situation that's traditionally called ``Bayesian'' whereas, if he's not willing to assume anything about the prior for $\Theta$, then he's in a situation commonly referred to as ``frequentist;'' of course, there are other intermediate cases between these two extremes, such as those in the robust Bayesian literature or when low-dimensional structural assumptions are imposed in high-dimensional problems.  Regardless of where the data analyst falls on this spectrum of starting points, his assumptions about $(Y,\Theta)$ are encoded in $\credal$ (or $\uprob_\credal$).  Second, to ensure the soundness of his inferences---in terms of both the external error rate control and internal rationality properties in Theorems~\ref{thm:error} and \ref{thm:no.sure.loss}---he quantifies his uncertainty in such a way that it's valid with respect to his $\credal$ (or $\uprob_\credal$), in the sense of Definition~\ref{def:valid}.  

The point is that it's not being ``Bayesian,'' ``frequentist,'' or whatever that makes a statistical analysis sound, it's that the data analyst's choice of IM is valid with respect to the specific assumptions that he is willing to make.  This is fully compatible with both the Bayesian and frequentist frameworks, and more.  For example, suppose one is interested in a hypothesis ``$\theta \in A$.'' If the data analyst is willing to assume a specific joint distribution for $(Y,\Theta)$, then the IM that quantifies uncertainty about $A$ using the Bayesian posterior probability for $A$ would be valid with respect to $\uprob_\credal$ the assumed joint distribution; see Section~\ref{SS:gbayes}.  Similarly, if the data analyst has no genuine prior information about $\theta$ and, accordingly, adopts a vacuous prior, then an IM for which $\uPi_y(A)$ is a p-value for the test of $H_0: \theta \in A$ would be valid with respect to $\uprob_\credal$ for any $\credal$; see Section~\ref{SS:revisited}.  A p-value is meant to be interpreted as a {\em plausibility}, and the results here show this interpretation is also mathematically sound; see, also, \citet{impval} and \citet{imchar}.  But despite being compatible with the existing Bayesian and frequentist frameworks, it's also entirely different.  That is, a Bayesian solution is valid with respect to one set of model assumptions and a frequentist solution is valid with respect to another, so there's no chance of mistakenly concluding, as the ASA task force does, that the two frameworks are interchangeable.  For example, the false confidence theorem ensures that no Bayes solution can be valid with respect to the frequentist's vacuous-prior $\uprob_\credal$.    

Of course, efficiency considerations are important too, and I discuss this in Section~\ref{S:efficiency} below.  However, I consider these to be secondary compared to the validity notion emphasized here, in part because overcoming the replication crisis requires control over false discoveries, and also because validity is what establishes a baseline with respect to which the efficiency of different IMs can be compared.   

To summarize, my claim is that by focusing on valid uncertainty quantification relative to the assumptions one is willing to make about $(Y,\Theta)$, the two-theory problem is settled. Instead of worrying about Bayesian-or-frequentist, one looks at what information is available in a given problem and formulates their IM accordingly.  Imprecise probability is crucial for this because the data analyst must be able to tailor the degree of precision in his IM to the strength of the assumptions about $(Y,\Theta)$ he's willing to make.

\section{Achieving validity}
\label{S:achieving}

\subsection{Generalized Bayes}
\label{SS:gbayes}

The goal of this section is to understand what kinds of IM constructions, especially those that incorporate partial prior information in the form of $\credal$, can achieve the validity property in Definition~\ref{def:valid}.  As I show below, the validity property is implied by a particular relationship between the IM's upper probability $\uPi_{y,\credal}(\cdot)$ and the ordinary Bayesian posterior probability $\prior(\cdot \mid y)$.  From this, it'll be clear that the {\em generalized Bayes rule}, described below, provides a construction of an IM that's valid with respect to a given $\credal$. Of course, if $\credal$ consists of a single prior distribution, then the classical Bayesian posterior based on this particular prior is valid with respect to the joint distribution $\uprob_\credal$.  

Towards this, I'll take a closer look at the validity property \eqref{eq:valid.up}.  Using the iterated expectations formula, 
\begin{equation}
\label{eq:iterated}
\uprob_\credal\bigl\{ \uPi_{Y,\credal}(A) \leq \alpha, \, \Theta \in A \bigr\} = \sup_{\prior \in \credal} \int 1\{\uPi_{y,\credal}(A) \leq \alpha\} \, \prior(A \mid y) \, \prob_{Y|\prior}(dy). 
\end{equation}
The integral on the right-hand side can be rewritten as a conditional expectation, 
\[  \E_{Y|\prior} \{ \prior(A \mid Y) \mid \uPi_{Y,\credal}(A) \leq \alpha \} \, \prob_{Y|\prior}\{ \uPi_{Y,\credal}(A) \leq \alpha\}, \]
where $\E_{Y|\prior}$ is expectation with respect to the marginal distribution, $\prob_{Y|\prior}$, of $Y$ depending on prior $\prior$.  Then it's easy to see that validity in the sense of \eqref{eq:valid.up} is implied by 
\begin{equation}
\label{eq:valid.alt}
\sup_{\prior \in \credal} \E_{Y|\prior} \{ \prior(A \mid Y) \mid \uPi_{Y,\credal}(A) \leq \alpha \} \leq \alpha, \quad \text{for all $(\alpha,A) \in [0,1] \times \A$}. 
\end{equation}
There are two points I want to draw the reader's attention to.  First, note that the above condition implies that the magnitude of $\uPi_{Y,\credal}(A)$ impacts the magnitude of $\prior(A \mid Y)$ for every $\prior \in \credal$.  That is, on the set of $Y$'s such that $\uPi_{Y,\credal}(A) \leq \alpha$, the average value of $\prior(A \mid Y)$ is also $\leq \alpha$.  Compare this to the {\em calibration safety} property in Definition~5 of \citet{grunwald.safe}, which asks that inferences about $A$ drawn based on the true conditional distribution of $\Theta$, given $Y$, won't differ on average from those based on the IM output $\uPi_{Y,\credal}(A)$.  Second, it's clear from \eqref{eq:valid.alt}---or directly from \eqref{eq:iterated}---that validity follows if $\uPi_{y,\credal}$ dominates all the conditional probabilities, i.e., if 
\[ \uPi_{y,\credal}(A) \geq \sup_{\prior \in \credal} \prior(A \mid y), \quad \text{for all $(A,y) \in \A \times \YY$}. \]
But it's not necessary that this {\em dominance} holds uniformly in $y$, it's enough if dominance holds only in an average sense.  The following proposition makes this precise. 

\begin{prop}
\label{prop:suff}
If, for some $\A' \subseteq \A$, the IM $(\lPi_{Y,\credal},\uPi_{Y,\credal})$ satisfies the following dominance property, 
\begin{equation}
\label{eq:dominance}
\sup_{\prior \in \credal} \int \frac{\prior(A \mid y)}{\uPi_{y,\credal}(A)} \, \prob_{Y|\prior}(dy) \leq 1, \quad \text{for all $A \in \A'$},
\end{equation}
then it's $\A'$-valid, relative to $\credal$, in the sense of Definition~\ref{def:valid}. 
\end{prop}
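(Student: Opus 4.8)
The plan is to work directly from the iterated-expectations identity \eqref{eq:iterated} derived just above, which already rewrites the left-hand side of the target bound \eqref{eq:valid.up} as
\[
\uprob_\credal\bigl\{ \uPi_{Y,\credal}(A) \leq \alpha, \, \Theta \in A \bigr\} = \sup_{\prior \in \credal} \int 1\{\uPi_{y,\credal}(A) \leq \alpha\} \, \prior(A \mid y) \, \prob_{Y|\prior}(dy),
\]
and then to bound the inner integral, for each fixed $\prior \in \credal$ and each $A \in \A'$, by $\alpha$ by bringing in the dominance hypothesis \eqref{eq:dominance}. Since the equivalence of \eqref{eq:valid.up} and \eqref{eq:valid.lo} is already part of Definition~\ref{def:valid} (via duality of $\lPi_{Y,\credal}$ and $\uPi_{Y,\credal}$), it suffices to establish \eqref{eq:valid.up}.

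First I would clear away the bookkeeping created by dividing by $\uPi_{y,\credal}(A)$ in \eqref{eq:dominance}: I adopt the convention $0/0 = 0$ and note that, because the integral in \eqref{eq:dominance} is finite, one must have $\prior(A \mid y) = 0$ for $\prob_{Y|\prior}$-almost every $y$ at which $\uPi_{y,\credal}(A) = 0$, so the ratio $\prior(A \mid y)/\uPi_{y,\credal}(A)$ is well-defined $\prob_{Y|\prior}$-almost everywhere and the displayed identity is unaffected on that null set.

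The central step is a pointwise rewriting on the event $\{\uPi_{y,\credal}(A) \leq \alpha\}$. There I factor $\prior(A \mid y) = \uPi_{y,\credal}(A) \cdot \frac{\prior(A \mid y)}{\uPi_{y,\credal}(A)}$ and use $\uPi_{y,\credal}(A) \leq \alpha$ together with nonnegativity to get
\[
1\{\uPi_{y,\credal}(A) \leq \alpha\}\,\prior(A \mid y) \;\leq\; \alpha\,\frac{\prior(A \mid y)}{\uPi_{y,\credal}(A)}.
\]
Integrating against $\prob_{Y|\prior}$ and discarding the (nonnegative) integrand's indicator on the right yields, for every $\prior \in \credal$,
\[
\int 1\{\uPi_{y,\credal}(A) \leq \alpha\}\,\prior(A \mid y)\,\prob_{Y|\prior}(dy) \;\leq\; \alpha \int \frac{\prior(A \mid y)}{\uPi_{y,\credal}(A)}\,\prob_{Y|\prior}(dy).
\]
Taking the supremum over $\prior \in \credal$ on both sides, pulling out the factor $\alpha \geq 0$, and invoking \eqref{eq:dominance} bounds the right-hand side by $\alpha \cdot 1 = \alpha$, which is exactly \eqref{eq:valid.up}, proving $\A'$-validity.

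The only genuine obstacle is the division by $\uPi_{y,\credal}(A)$, which is why the measure-theoretic care in the second step is needed; everything else is a one-line manipulation trading the indicator for the factor $\alpha$. I expect no difficulty with the supremum interchange, since the displayed integral bound holds for each $\prior$ separately, so taking $\sup_{\prior}$ and then applying \eqref{eq:dominance}---which is itself phrased as a supremum over $\credal$---is immediate.
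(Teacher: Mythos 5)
Your proof is correct and is essentially the paper's own argument: the paper starts from the same identity \eqref{eq:iterated} and uses the pointwise bound $\alpha^{-1} 1\{\uPi_{y,\credal}(A) \leq \alpha\} \leq \uPi_{y,\credal}(A)^{-1}$, which is exactly your inequality $1\{\uPi_{y,\credal}(A) \leq \alpha\}\,\prior(A \mid y) \leq \alpha\,\prior(A \mid y)/\uPi_{y,\credal}(A)$ after multiplying through by $\alpha\,\prior(A \mid y)$, followed by the same integrate-then-take-supremum step. Your extra care with the $\uPi_{y,\credal}(A)=0$ case is a minor refinement the paper glosses over, not a change of method.
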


\begin{proof}
Take any $A \in \A'$.  By \eqref{eq:iterated}, validity is equivalent to 
\[ \sup_{\prior \in \credal} \int 1\{\uPi_{y,\credal}(A) \leq \alpha\} \, \prior(A \mid y) \, \prob_{Y|\prior}(dy) \leq \alpha. \]
Divide both sides by $\alpha$ and use the fact that 
\[ \alpha^{-1} 1\{\uPi_{y,\credal}(A) \leq \alpha\} \leq \uPi_{y,\credal}(A)^{-1}, \]
to see that condition \eqref{eq:dominance} implies $\A'$-validity. 
\end{proof}

The following is an important and immediate consequence of Proposition~\ref{prop:suff}.  Indeed, the conservatism built in to the generalized Bayes rule, which is motivated by subjective coherence properties, is also sufficient to achieve validity.  

\begin{cor}
\label{cor:suff}
For a given $\credal$, if the IM has output upper probability $\uPi_{Y,\credal}$ defined as the upper envelope in the generalized Bayes rule \citep[e.g.,][Sec.~6.4]{walley1991}, that is, if
\begin{equation}
\label{eq:gbayes}
\uPi_{y,\credal}(A) = \sup_{\prior \in \credal} \prior(A \mid y), \quad A \in \A, 
\end{equation}
then it's valid relative to $\credal$ in the sense of Definition~\ref{def:valid}.  In particular, in the classical single-prior Bayesian formulation, the IM that's equal to the usual Bayesian posterior distribution is valid in the sense of Definition~\ref{def:valid} with respect to the singleton $\credal$. 
\end{cor}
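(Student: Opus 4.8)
The plan is to derive the corollary from Proposition~\ref{prop:suff} by checking that the generalized-Bayes upper envelope \eqref{eq:gbayes} satisfies the dominance condition \eqref{eq:dominance} for the largest possible sub-collection, namely $\A' = \A$. The entire argument rests on one elementary observation: for any fixed $\prior \in \credal$ and any fixed $A \in \A$, membership of $\prior$ in the credal set yields the pointwise bound
\[
\prior(A \mid y) \leq \sup_{\prior' \in \credal} \prior'(A \mid y) = \uPi_{y,\credal}(A), \quad \text{for every } y \in \YY.
\]
Dividing through by $\uPi_{y,\credal}(A)$ shows that the integrand appearing in \eqref{eq:dominance} is bounded above by $1$ pointwise in $y$.

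First I would fix $A \in \A$ and $\prior \in \credal$ and, using the displayed inequality, bound $\int \prior(A\mid y)\,\uPi_{y,\credal}(A)^{-1}\,\prob_{Y|\prior}(dy) \leq \int 1\,\prob_{Y|\prior}(dy) = 1$, since $\prob_{Y|\prior}$ is a probability measure. Taking the supremum over $\prior \in \credal$ then gives \eqref{eq:dominance} for this $A$; because $A$ was arbitrary, the dominance condition holds with $\A' = \A$, and Proposition~\ref{prop:suff} delivers full validity relative to $\credal$. The singleton case $\credal = \{\prior\}$ is then immediate: here $\uPi_{y,\credal}(A) = \prior(A \mid y)$, so the integrand is identically $1$ and the dominance inequality holds with equality, recovering the claim that the ordinary Bayesian posterior is valid with respect to the joint distribution $\uprob_{\{\prior\}}$.

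The only real wrinkle is the degenerate case $\uPi_{y,\credal}(A) = 0$, where the ratio in \eqref{eq:dominance} is formally $0/0$; but on that set the generalized-Bayes definition forces $\prior(A \mid y) = 0$ as well, so the contribution is naturally read as zero and causes no difficulty. To keep the argument self-contained and sidestep the division entirely, I would alternatively argue straight from the iterated-expectation identity \eqref{eq:iterated}: on the event $\{\uPi_{y,\credal}(A) \leq \alpha\}$ the same membership bound gives $\prior(A \mid y) \leq \uPi_{y,\credal}(A) \leq \alpha$, whence $\int 1\{\uPi_{y,\credal}(A) \leq \alpha\}\,\prior(A\mid y)\,\prob_{Y|\prior}(dy) \leq \alpha$, and taking the supremum over $\prior$ yields \eqref{eq:valid.up} directly. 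I do not anticipate a genuine obstacle here; the substantive hypotheses---existence of the posteriors $\prior(\cdot \mid y)$ via Bayes's rule and measurability of $y \mapsto \uPi_{y,\credal}(A)$---are already in force from the setup, so the proof is essentially a one-line verification dressed up through Proposition~\ref{prop:suff}.
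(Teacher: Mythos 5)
Your proposal is correct and follows essentially the same route as the paper: the corollary is presented there as an immediate consequence of Proposition~\ref{prop:suff}, precisely because the generalized-Bayes envelope \eqref{eq:gbayes} dominates every $\prior(A \mid y)$ pointwise, which makes the integrand in \eqref{eq:dominance} at most $1$ and hence the dominance condition trivial. Your alternative one-line argument via \eqref{eq:iterated} is also the same observation the paper itself makes just before stating the proposition, so there is nothing to flag.
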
 

Theorem~\ref{thm:error} suggested that strong validity in the sense of Definition~\ref{def:strong} was required for the IM's plausibility region to have the nominal coverage probability.  However, that suggestion was being made for a general IM construction.  In the present context, since the generalized Bayes IM $\uPi_{y,\credal}$ involves the same ingredients used to calculate $\uprob_\credal$-probabilities, a coverage probability property can be verified directly without appealing to the general result in Theorem~\ref{thm:error}.  Indeed, using the second expression for evaluating $\uprob_\credal$-probabilities given in \eqref{eq:total.prob}, 
\[ \uprob_\credal\{C_\alpha(Y) \not\ni \Theta\} = \sup_{\prior \in \credal} \int \prior\{ C_\alpha(y)^c \mid y\} \, \prob_{Y|\prior}(dy). \]
So, if $C_\alpha(y)$ is such that 
\begin{equation}
\label{eq:bayes.credible}
\uprior\{ C_\alpha(y)^c \mid y\} \leq \alpha, \quad \text{for (almost) all $y$}, 
\end{equation}
then $\uprob_\credal\{ C_\alpha(Y) \not\ni \Theta \} \leq \alpha$ as claimed.  However, the set $C_\alpha(y)$ in \eqref{eq:pl.region}, i.e., 
\[ C_\alpha(y) = \bigcap \{A: \lprior(A \mid y) \geq 1-\alpha\}, \]
generally doesn't satisfy \eqref{eq:bayes.credible}, so some effort is needed to identify a suitable $C_\alpha(y)$.  The idea of \eqref{eq:bayes.credible} is correct, i.e., to find the smallest $A$ with $\lprior(A \mid y) \geq 1-\alpha$, but the intersection of all $A$'s with this property generally won't satisfy this property. 

The above connection between validity and dominance also sheds light on what constructions likely don't lead to valid IMs.  For example, consider an approach like that described in \citet{dempster2008}, where independent belief functions for $\theta$---one based on prior information, the other based on data---are combined, via Dempster's rule, to produce an IM with output $(\lPi_{Y,\credal}, \uPi_{Y,\credal})$.  The probability intervals $[\lPi_{Y,\credal}(A), \uPi_{Y,\credal}(A)]$ obtained by Dempster's rule tend to be narrower than those corresponding to the generalized Bayes lower and upper envelopes; see \citet[][Theorems~A.3 and A.6]{kyburg1987} and \citet[][Lemma~5.3]{gong.meng.update}.  Moreover, Example~5 in \citet{gong.meng.update}, shows that Dempster's rule can lead to contraction like in \eqref{eq:contraction}, so  Theorem~\ref{thm:no.sure.loss} above suggests that IMs constructed via Dempster's rule can't generally be valid.  

It's important to emphasize that dominance in the sense of \eqref{eq:dominance} above is a sufficient but not necessary condition for validity.  Indeed, there are other IM constructions besides the generalized Bayes lower/upper envelopes above that achieve the validity property, as I show below. It's also worth emphasizing that validity, on its own, doesn't make the IM ``good''---it may happen that \eqref{eq:dominance} is achieved in a trivial way, which is not practically useful.  For example, if the credal set $\credal$ is large, then the upper envelope \eqref{eq:gbayes} in the generalized Bayes rule could be close to 1, for all/many $A$'s, and then the inference would not be informative.  So, beyond validity, it is necessary to consider the IM's {\em efficiency}, which is the focus of Section~\ref{S:efficiency}.

\subsection{Assertion-wise dominance}
\label{SS:dumb}

In the previous section, it was shown that if the IM's upper probability dominates the Bayesian posterior probability uniformly over priors in $\credal$, then validity follows.  It turns out, however, that other kinds of dominance also imply validity.  

\begin{prop}
\label{prop:dumb}
Let $\uPi_{Y,\credal}$ be any IM that incorporates both data $Y$ and the partial prior information encoded in $\credal$, and let $\A' \subseteq \A$.  If the IM satisfies 
\begin{equation}
\label{eq:dumb}
\uPi_{Y,\credal}(A) \geq \uPi_Y(A) \, \uprior(A), \quad \text{$A \in \A'$ with $\uprior(A) > 0$}, 
\end{equation}
where $\uPi_Y$ is the vacuous-prior IM and $\uprior$ is the prior upper probability, then it's $\A'$-valid in the sense of Definition~\ref{def:valid}. 
\end{prop}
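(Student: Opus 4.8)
The plan is to verify the upper-probability form \eqref{eq:valid.up} of $\A'$-validity directly, exploiting the fact that the postulated factorized lower bound converts the event $\{\uPi_{Y,\credal}(A)\le\alpha\}$ into an event about the \emph{vacuous-prior} upper probability $\uPi_Y$, whose tail behavior is pinned down by its vacuous-prior validity. Throughout I take as given that $\uPi_Y$ is valid in the sense of Definition~\ref{def:no.prior.valid}, i.e., $\sup_{\theta\in A}\prob_{Y|\theta}\{\uPi_Y(A)\le\beta\}\le\beta$ for all $(\beta,A)\in[0,1]\times\A$; this per-$\theta$ bound is exactly what ``the vacuous-prior IM'' is meant to supply, and it is the engine of the whole argument.

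First I would fix an arbitrary $A\in\A'$ and $\alpha\in[0,1]$ and dispose of the two degenerate regimes, since the hypothesis \eqref{eq:dumb} is only assumed when $\uprior(A)>0$. If $\uprior(A)=0$, then every $\prior\in\credal$ assigns $A$ probability zero, so by the first line of \eqref{eq:total.prob} the joint event $\{\Theta\in A\}$ already has $\uprob_\credal$-probability zero and \eqref{eq:valid.up} holds trivially. If instead $\alpha\ge\uprior(A)$, then bounding the target event by $\{\Theta\in A\}$ and evaluating via \eqref{eq:total.prob} gives $\uprob_\credal\{\uPi_{Y,\credal}(A)\le\alpha,\Theta\in A\}\le\uprior(A)\le\alpha$, again with no use of \eqref{eq:dumb}. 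So the only substantive case is $0<\alpha<\uprior(A)$, where $\uprior(A)>0$ and \eqref{eq:dumb} is in force.

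In that case I would use the factorization $\uPi_{Y,\credal}(A)\ge\uPi_Y(A)\,\uprior(A)$ to obtain the event inclusion $\{\uPi_{Y,\credal}(A)\le\alpha\}\subseteq\{\uPi_Y(A)\le\alpha/\uprior(A)\}$, dividing by the positive quantity $\uprior(A)$. Writing the target probability via the first line of \eqref{eq:total.prob}, restricting the $\theta$-integral to $A$, and noting that $\uPi_Y$ depends on $y$ only, this yields
\[
\uprob_\credal\{\uPi_{Y,\credal}(A)\le\alpha,\Theta\in A\}
\le \sup_{\prior\in\credal}\int_A \prob_{Y|\theta}\bigl\{\uPi_Y(A)\le \alpha/\uprior(A)\bigr\}\,\prior(d\theta).
\]
Because $\beta:=\alpha/\uprior(A)\in(0,1)$ in this regime, vacuous-prior validity of $\uPi_Y$ bounds the inner probability by $\beta$ uniformly over $\theta\in A$; pulling the constant $\beta$ outside and using $\sup_{\prior\in\credal}\prior(A)=\uprior(A)$ collapses the right-hand side to $(\alpha/\uprior(A))\cdot\uprior(A)=\alpha$. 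This is precisely \eqref{eq:valid.up}, and since $A\in\A'$ and $\alpha$ were arbitrary, $\A'$-validity follows; the duality built into Definition~\ref{def:valid} then delivers \eqref{eq:valid.lo} as well.

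The computation is short, so there is no deep obstacle; the points requiring care are bookkeeping rather than mathematics. The main thing to get right is the case split on $\uprior(A)$ and on whether $\alpha/\uprior(A)$ exceeds $1$: the hypothesis \eqref{eq:dumb} is available only when $\uprior(A)>0$, and the vacuous-prior validity bound is only informative when its argument lies in $[0,1]$, so both degenerate regimes must be cleared before the clean averaging step applies. A secondary point is simply to confirm that ``the vacuous-prior IM $\uPi_Y$'' is indeed assumed valid in the sense of Definition~\ref{def:no.prior.valid}, since the entire argument rests on the bound $\prob_{Y|\theta}\{\uPi_Y(A)\le\beta\}\le\beta$ holding for $\theta\in A$.
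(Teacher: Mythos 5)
Your proof is correct and follows essentially the same route as the paper's: convert the event $\{\uPi_{Y,\credal}(A)\le\alpha\}$ into $\{\uPi_Y(A)\le\alpha/\uprior(A)\}$ via the dominance hypothesis, apply vacuous-prior validity uniformly over $\theta\in A$, then integrate and use $\sup_{\prior\in\credal}\prior(A)=\uprior(A)$ to collapse the bound to $\alpha$. Your explicit case split on $\uprior(A)=0$ and $\alpha\ge\uprior(A)$ is just a more verbose rendering of the paper's single bound $1\wedge\alpha\,\uprior(A)^{-1}$, so the two arguments are substantively identical.
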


\begin{proof}
For any $A \in \A'$ and any given $\theta \in A$, 
\begin{align*}
\prob_{Y|\theta}\{\uPi_{Y,\credal}(A) \leq \alpha\} & \leq \prob_{Y|\theta}\{\uPi_Y(A) \, \uprior(A) \leq \alpha\} \\
& = \prob_{Y|\theta}\{ \uPi_Y(A) \leq \alpha \uprior(A)^{-1}\} \\
& \leq 1 \wedge \alpha \uprior(A)^{-1},
\end{align*}
where the last line follows by validity of the vacuous-prior IM, and $\wedge$ denotes the minimum operator.  Then 
\begin{align*}
\uprob_\credal\bigl\{ \uPi_Y(A) \leq \alpha, \, \Theta \in A \bigr\} & = \sup_{\prior \in \credal} \int_A \prob_{Y|\theta}\{\uPi_{Y,\credal}(A) \leq \alpha\} \, \prior(d\theta) \\
& \leq \sup_{\prior \in \credal} \frac{\alpha \prior(A)}{\uprior(A)} \\
& = \alpha,
\end{align*}
which establishes $\A'$-validity as in Definition~\ref{def:valid}. 
\end{proof}

An important question is if there are any reasonable IM constructions that satisfy the dominance property \eqref{eq:dumb}.  Unfortunately, I'm not aware of any constructions that do so with $\A' = \A$.  To understand this issue, consider just directly using the condition \eqref{eq:dumb} to define the IM.  For example, suppose $\uPi_{y,\credal}(A) := \uPi_Y(A) \star \uprior(A)$, for all $A \in \A$, where $\star$ is a t-norm that's no smaller than the product, i.e., $a \star b \geq ab$. Then \eqref{eq:dumb} holds by definition and, hence, so does validity.  The problem is that, while the so-defined $\uPi_{y,\credal}$ is a capacity, it's not guaranteed to be sub-additive and, hence, isn't an IM.  In particular, if $A$ and $B$ are two disjoint assertions and $\star$ is the product t-norm, then 
\begin{align*}
\uPi_y(A \cup B) \cdot \uprior(A \cup B) & \leq \{\uPi_y(A) + \uPi_y(B)\} \cdot \{\uprior(A) + \uprior(B)\} \\
& \geq \uPi_y(A) \cdot \uprior(A) + \uPi_y(B) \cdot \uprior(B).
\end{align*}
Of course, the oppositely-oriented inequalities don't rule out the possibility of sub-additivity, but more structure would be needed to prove it.  Fortunately, as I'll show in Section~\ref{S:other}, there are natural IM constructions, e.g., using Dempster's rule, that satisfy \eqref{eq:dumb} for all $A$ in a suitable, proper sub-collection $\A'$ of $\A$.  Then the result in Proposition~\ref{prop:dumb} can be used to prove that these IMs are $\A'$-valid.  

From a pragmatic point of view, there may be situations in which a full-blown probabilistic uncertainty quantification about $\Theta$ isn't necessary.  Sometimes being able to test a hypothesis ``$\Theta \in A$'' is enough.  In such cases, the product $\uPi_Y(A) \, \uprior(A)$ could be treated like a p-value, which defines a decision rule 
\begin{equation}
\label{eq:dumb.test}
\text{reject ``$\Theta \in A$'' if $\uPi_Y(A) \, \uprior(A) \leq \alpha$}. 
\end{equation}
Then Proposition~\ref{prop:dumb} above implies an error rate control for the test.

\begin{cor}
The test defined in \eqref{eq:dumb.test} satisfies \eqref{eq:size} for all $A \in \A$.  
\end{cor}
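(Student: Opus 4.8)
The plan is to verify \eqref{eq:size} directly for the particular test in \eqref{eq:dumb.test}, re-running the computation from the proof of Proposition~\ref{prop:dumb} rather than invoking that proposition as a black box. This direct route is appropriate because, as the preceding discussion observes, the product $\uPi_Y(A)\,\uprior(A)$ need not be sub-additive and hence need not define a genuine IM; but the corollary claims only an error-rate property of a test, so no sub-additivity (and indeed no appeal to Theorem~\ref{thm:error}) is required. The proof is therefore essentially the computation already carried out inside Proposition~\ref{prop:dumb}, specialized to the choice $\uPi_{y,\credal}(A) = \uPi_Y(A)\,\uprior(A)$.

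First I would fix $A \in \A$ and expand the left-hand side of \eqref{eq:size} using the first representation of $\uprob_\credal$ in \eqref{eq:total.prob}:
\[ \uprob_\credal\{\uPi_Y(A)\,\uprior(A) \leq \alpha, \, \Theta \in A\} = \sup_{\prior \in \credal} \int_A \prob_{Y|\theta}\{\uPi_Y(A)\,\uprior(A) \leq \alpha\} \, \prior(d\theta). \]
The key observation is that $\uprior(A) = \sup_{\prior \in \credal}\prior(A)$ is a fixed, data-free constant, so whenever $\uprior(A) > 0$ the inner event can be rewritten as $\{\uPi_Y(A) \leq \alpha\,\uprior(A)^{-1}\}$. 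I would then apply the vacuous-prior validity of $\uPi_Y$ — that is, condition \eqref{eq:old.valid} of Definition~\ref{def:no.prior.valid} — which bounds, for each $\theta \in A$, the probability $\prob_{Y|\theta}\{\uPi_Y(A) \leq \alpha\,\uprior(A)^{-1}\}$ by $1 \wedge \alpha\,\uprior(A)^{-1}$. Substituting and integrating over $A$ gives
\[ \sup_{\prior \in \credal} \int_A \bigl(1 \wedge \alpha\,\uprior(A)^{-1}\bigr) \, \prior(d\theta) \leq \sup_{\prior \in \credal} \frac{\alpha\,\prior(A)}{\uprior(A)} = \frac{\alpha}{\uprior(A)}\sup_{\prior \in \credal}\prior(A) = \alpha, \]
where the final equality uses $\sup_{\prior}\prior(A) = \uprior(A)$. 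This is exactly the inequality chain appearing in the proof of Proposition~\ref{prop:dumb}.

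The one step deserving explicit care — and the only place where the corollary goes beyond that proposition, which assumed $\uprior(A) > 0$ — is the degenerate case $\uprior(A) = 0$. Here every $\prior \in \credal$ assigns $\prior(A) = 0$, so the integral $\int_A(\cdots)\,\prior(d\theta)$ vanishes identically and the joint upper probability in \eqref{eq:size} is trivially $0 \leq \alpha$; the rejection behavior on $\{\uPi_Y(A)\cdot 0 \leq \alpha\}$ is immaterial because $\{\Theta \in A\}$ is a $\uprob_\credal$-null event. Handling this boundary case is what lets the conclusion extend to all $A \in \A$. I anticipate no genuine obstacle: the result is a restatement of the Proposition~\ref{prop:dumb} computation, and the only things requiring attention are pulling the constant $\uprior(A)$ out of the inner probability and dispatching the $\uprior(A) = 0$ case separately.
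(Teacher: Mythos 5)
Your proof is correct and is essentially the paper's own argument: the paper establishes this corollary simply by invoking Proposition~\ref{prop:dumb}, whose proof is exactly the inequality chain you reproduce (rewrite the rejection event as $\{\uPi_Y(A) \leq \alpha\,\uprior(A)^{-1}\}$, apply vacuous-prior validity, integrate, and take the supremum over $\prior \in \credal$). Your two refinements---running the computation directly so that the result does not rest on the product $\uPi_Y(A)\,\uprior(A)$ being a genuine (sub-additive) IM, and explicitly dispatching the degenerate case $\uprior(A)=0$---are sensible but minor polish on the same approach.
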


By the validity of the vacuous-prior IM, the corresponding test based on the ``p-value'' $\uPi_Y(A)$ alone would also control the Type~I error.  Since $\uprior(A) \leq 1$, it must be that the product ``p-value'' $\uPi_Y(A) \, \uprior(A)$ is no larger than $\uPi_Y(A)$ and, therefore, generally leads to a more discerning, more powerful test. 


\subsection{Information aggregation}
\label{SS:fusion}

If one thinks of the partial prior encoded in the credal set $\credal$ and the vacuous-prior IM $\uPi_y$ as separate pieces of information concerning the unknown $\theta$, then present problem can be viewed as one of {\em information aggregation} or {\em information fusion}. Naturally, there are so many ways that this fusion step can be carried out, and there's an extensive literature on the subject; see, e.g., the extensive review in \citet{dubois.fusion.2016}. 

Here I want to focus on a simple strategy presented recently in \citet{hose.hanss.2021}.  Their perspective is to combine pieces of information at the credal set level, which is certainly reasonable.  I've described the prior credal set already, but I've not mentioned the credal set corresponding to the vacuous-prior IM treated as a possibility measure.  In \citet{imdec}, I showed that the credal set corresponding to the vacuous-prior IM consists of ``confidence distributions'' in a sense that's slightly broader than that given in the confidence distribution literature \citep[e.g.,][]{xie.singh.2012}.  Since both the prior and the vacuous-prior IM credal sets correspond to meaningful probability distributions for $\theta$, the intersection of these two sets deserves investigation.  

The specific proposal in \citet{hose.hanss.2021}, Proposition~37, is to define an aggregated plausibility contour 
\begin{equation}
\label{eq:fusion}
\pi_{y,\credal}(\vartheta) = 1 \wedge (1-w)^{-1} \, \pi_y(\vartheta) \wedge w^{-1} \, q(\vartheta), \quad \vartheta \in \TT, 
\end{equation}
where $q$ and $\pi_y$ are the plausibility contours for the prior and the vacuous-prior IM, respectively, and $w \in (0,1)$ is a weight parameter that is constant in $(y,\vartheta)$.  The idea is that $w \to 0$ corresponds to putting more weight on the data-driven IM and less on the prior IM.  But since the informativeness of the data is already baked into the vacuous-prior IM's contour $\pi_y$, it's not unreasonable to take $w=\frac12$, which corresponds to equal weight on the two IMs being combined.  This defines a genuine possibility measure if and only if $\sup_\vartheta \pi_{y,\credal}(\vartheta)=1$, which, according to \citet[][Remark~36]{hose.hanss.2021}, is equivalent to the intersection of $\credal$ and the credal set corresponding to $\uPi_y$ being non-empty.


It's often the case that the credal set intersection is non-empty, but this can't be controlled by the user, since it depends on the data.  However, by the connection between data $Y$ and the credal set $\credal$ through the posited statistical model, there is good reason to expect that empty credal set intersection would be a rare event, and Proposition~\ref{prop:fusion} below confirms this hunch.  So I'll simply ignore this potential emptiness and treat $\pi_{y,\credal}$ above as if it defines a genuine IM and proceed to consider if validity is achieved.  Indeed, the resulting IM is strongly valid.  

\begin{prop}
\label{prop:fusion}
The IM with contour $\pi_{y,\credal}$ defined by the aggregation rule above is strongly valid, with respect to $\credal$, in the sense of Definition~\ref{def:strong}. 
\end{prop}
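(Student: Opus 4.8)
The plan is to verify strong validity through the contour reformulation \eqref{eq:strong.alt} established in the lemma of Section~\ref{SS:def}. That is, writing $\pi_{Y,\credal}$ for the aggregated contour in \eqref{eq:fusion}, it suffices to show that $\uprob_\credal\{\pi_{Y,\credal}(\Theta) \leq \alpha\} \leq \alpha$ for every $\alpha \in [0,1]$. The case $\alpha = 1$ is trivial, so I fix $\alpha \in [0,1)$ and work with the random variable $\pi_{Y,\credal}(\Theta)$ viewed as a function of $(Y,\Theta) \sim \uprob_\credal$.

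The key structural observation is that the aggregated contour is a minimum of three terms, so it is small exactly when at least one term is small. Since $w \in (0,1)$ and $\alpha < 1$, the constant term $1$ is irrelevant, and I would first record the set identity
\[ \{\pi_{Y,\credal}(\Theta) \leq \alpha\} = \{\pi_Y(\Theta) \leq (1-w)\alpha\} \cup \{q(\Theta) \leq w\alpha\}, \]
which follows from $(1-w)^{-1}\pi_Y(\vartheta) \leq \alpha \iff \pi_Y(\vartheta) \leq (1-w)\alpha$ and, similarly, $w^{-1} q(\vartheta) \leq \alpha \iff q(\vartheta) \leq w\alpha$. Because $\uprob_\credal$ is an upper envelope of countably additive joint distributions, it is sub-additive, so
\[ \uprob_\credal\{\pi_{Y,\credal}(\Theta) \leq \alpha\} \leq \uprob_\credal\{\pi_Y(\Theta) \leq (1-w)\alpha\} + \uprob_\credal\{q(\Theta) \leq w\alpha\}. \]

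The remaining work is to bound the two terms by $(1-w)\alpha$ and $w\alpha$, respectively, so that they sum to $\alpha$. For the data-driven term I would expand $\uprob_\credal$ using the first expression in \eqref{eq:total.prob} and integrate out $Y$, reducing it to $\sup_{\prior \in \credal} \int \prob_{Y|\theta}\{\pi_Y(\theta) \leq (1-w)\alpha\} \, \prior(d\theta)$; the vacuous-prior validity of $\uPi_Y$ in the form \eqref{eq:uniform} bounds the inner conditional probability by $(1-w)\alpha$ uniformly in $\theta$, and this bound survives integration against $\prior$ and the supremum over $\credal$. The prior-driven term involves only the marginal of $\Theta$, so $\uprob_\credal\{q(\Theta) \leq w\alpha\} = \uprior(\{\vartheta : q(\vartheta) \leq w\alpha\})$, and since $q$ is the contour of the prior possibility measure, consonance gives $\uprior(\{q \leq w\alpha\}) = \sup_{\vartheta : q(\vartheta) \leq w\alpha} q(\vartheta) \leq w\alpha$. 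Adding the two bounds yields $\alpha$, as required.

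The main obstacle---and the place where the hypotheses really matter---is the prior-driven term. Its bound relies on the partial prior being (the credal set of) a possibility measure with contour $q$, so that $\uprior(A) = \sup_{\vartheta \in A} q(\vartheta)$; for a general credal set this self-calibration $\uprior(\{q \leq \beta\}) \leq \beta$ can fail badly, e.g., a precise continuous prior has $q \equiv 0$, making the left-hand side equal to $1$. The only other point needing care is the weight bookkeeping: the split $w + (1-w) = 1$ is precisely what forces the two error contributions to combine to exactly $\alpha$, so the argument is insensitive to the particular value of $w \in (0,1)$, consistent with the remark that $w = \tfrac12$ is a natural default.
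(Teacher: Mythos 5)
Your proposal is correct and follows essentially the same route as the paper's own proof: the same decomposition of $\{\pi_{Y,\credal}(\Theta) \leq \alpha\}$ into the union $\{\pi_Y(\Theta) \leq (1-w)\alpha\} \cup \{q(\Theta) \leq w\alpha\}$, the same union bound, the same $(1-w)\alpha$ bound from vacuous-prior validity, and the same $w\alpha$ bound on the prior term. The only cosmetic differences are that you work directly with the sub-additivity of $\uprob_\credal$ rather than prior-by-prior with a supremum at the end, and you derive the alpha-cut bound $\uprior(\{q \leq w\alpha\}) \leq w\alpha$ from consonance of $\uprior$ rather than citing it as a known result, which makes your write-up slightly more self-contained.
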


\begin{proof}
As a function of $(Y,\Theta)$ from the posited model, with $\Theta \sim \prior$, the following events are all equivalent:
\begin{align*}
\pi_{Y,\credal}(\Theta) \leq \alpha & \iff (1-w)^{-1}\pi_Y(\Theta) \wedge w^{-1} q(\Theta) \leq \alpha \\
& \iff \pi_Y(\Theta) \leq (1-w)\alpha \text{ or } q(\Theta) \leq w\alpha. 
\end{align*}
Then 
\[ \prob_{Y,\Theta}\{\pi_{Y,\credal}(\Theta) \leq \alpha\} \leq \prob_{Y,\Theta}\{ \pi_Y(\Theta) \leq (1-w)\alpha\} + \prior\{q(\Theta) \leq w\alpha\}. \]
It follows immediately from the strong validity of the vacuous-prior IM that the first term in the above upper bound is $\leq (1-w)\alpha$.  Similarly, a well-known result \citep[e.g.,][]{cuoso.etal.2001} on the probability of alpha-cuts defined by a plausibility contour implies that the second term above is $\leq w\alpha$ as well.  Therefore, the left-hand side is no more than $\alpha$, uniformly in the prior $\prior \in \credal$, which establishes the strong validity claim.  
\end{proof}

This is a striking result because it says there's effectively no reason to be concerned about the aforementioned credal set intersection being empty.  If, e.g., $\sup_\vartheta \pi_{y,\credal}(\vartheta)$ was strictly less than 1 for almost all $y$, then strong validity would not be possible.  So it must be that the supremum is not too small, at least not too frequently.  This could easily be normalized, if so desired, by simply dividing through by $\sup_t \pi_{y,\credal}(t)$.  This normalization step adds to the computational complexity but doesn't affect validity.  Indeed, since the normalizing factor appearing in the denominator is no more than 1, the normalized version of the plausibility contour can't be any smaller than the un-normalized version, and since the latter is strongly valid by Proposition~\ref{prop:fusion}, the former must be too.  

The only downside I see to the above construction is that the resulting IM tends to be inefficient in a statistical sense; see Section~\ref{S:efficiency} below.  Before I can explain what I mean by statistical efficiency, I need to revisit the vacuous-prior IM from Section~\ref{S:background}.

\subsection{Vacuous-prior IMs, revisited}
\label{SS:revisited}

In Section~\ref{S:background}, I describe the basic IM as a ``vacuous-prior'' IM, meaning that there's no genuine prior information available about $\theta$ to incorporate for the purpose of uncertainty quantification.  But since having no prior information requires consideration of all possible states of nature, perhaps a better name for ``vacuous-prior'' is ``every-prior.'' From this perspective, IMs that are valid in the vacuous-prior sense clearly must also be valid with respect to any partial prior $\credal$.  For various reasons, the vacuous-prior construction is not a fully satisfactory solution to the problem of valid inference with partial prior information, but it's technically a strategy that achieves validity, so it deserves mentioning.  Actually, it turns out that the vacuous-prior construction satisfies some other interesting properties when applied in the partial-prior setting, and I present these details below.  

Aside from being trivially valid for any prior credal set $\credal$, the vacuous-prior IMs satisfy some even stronger properties.  The first is the {\em uniform-in-assertions} validity property introduced in Definition~\ref{def:strong}, and the second is a {\em conditional} validity property.  I'll present each of these in turn.  

The vacuous-prior IMs constructed as in Section~\ref{SS:original.im} achieve validity quite easily, arguably too easily.  So perhaps a stronger notion of validity, with uniformity in $A$, is ``just right.'' Indeed, it is not difficult to show that 
\begin{equation}
\label{eq:singleton}
\uPi_y(A) \leq \alpha \text{ for some $A \ni \theta$} \iff \uPi_y(\{\theta\}) \leq \alpha. 
\end{equation}
So, since the vacuous-prior IMs discussed in Section~\ref{SS:original.im} satisfy \eqref{eq:uniform}, they must also be strongly valid in the sense of Definition~\ref{def:strong}.   


\begin{prop}
\label{prop:strong}
If the IM's upper probability $\uPi_Y$ satisfies \eqref{eq:uniform}, then the strong validity property in Definition~\ref{def:strong} holds for any $\credal$. 
\end{prop}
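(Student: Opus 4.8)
The plan is to reduce strong validity to the single scalar condition \eqref{eq:strong.alt} and then to exploit the fact that a vacuous-prior IM's plausibility contour does not depend on $\credal$ at all. First I would invoke the lemma in Section~\ref{SS:def}, which says that the strong-validity conditions \eqref{eq:strong.up}--\eqref{eq:strong.lo} are equivalent to
\[
\uprob_\credal\{\pi_Y(\Theta) \leq \alpha\} \leq \alpha, \quad \alpha \in [0,1],
\]
where, because the IM here ignores the prior, the contour is simply $\pi_y(\vartheta) = \uPi_y(\{\vartheta\})$ with no $\credal$-dependence. Equivalently, one may use \eqref{eq:singleton} directly in place of the lemma. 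Either way, the whole claim comes down to bounding this one probability.

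Next I would expand $\uprob_\credal$ using the first representation in \eqref{eq:total.prob}, writing the joint upper probability as a supremum over $\prior \in \credal$ of an iterated integral in which $Y$ is drawn from $\prob_{Y|\theta}$ and $\theta$ from $\prior$. Integrating over $Y$ first—legitimate under the standing assumption that $y \mapsto \uPi_y(A)$ is Borel measurable—the inner integral becomes, for each fixed $\theta$,
\[
\int 1\{\pi_y(\theta) \leq \alpha\}\, \prob_{Y|\theta}(dy) = \prob_{Y|\theta}\{\uPi_Y(\{\theta\}) \leq \alpha\},
\]
which is exactly the quantity controlled by the hypothesis \eqref{eq:uniform}. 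Thus the inner integral is bounded by $\alpha$ uniformly in $\theta$.

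Finally I would integrate this bound against $\prior$: since the integrand is $\leq \alpha$ for every $\theta$ and $\prior$ is a probability measure, the $\theta$-integral is $\leq \alpha$ for each $\prior \in \credal$, and the bound survives the supremum over $\credal$. This yields $\uprob_\credal\{\pi_Y(\Theta) \leq \alpha\} \leq \alpha$ and hence strong validity.

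There is essentially no hard step here; the only thing to watch is the order of integration and the measurability needed to justify it, both of which are covered by the standing measurability assumption. The conceptual point worth stating is \emph{why} it works: because the contour is prior-free, the event $\{\pi_Y(\Theta) \leq \alpha\}$ decomposes along $\theta$-slices each already controlled by \eqref{eq:uniform}, so averaging over any prior in $\credal$—indeed over any prior whatsoever—cannot inflate the probability past $\alpha$. This is precisely the formal counterpart of the ``every-prior'' reading of the vacuous-prior IM, and it makes good on the claim that such IMs are (strongly) valid relative to every $\credal$.
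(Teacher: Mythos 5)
Your proposal is correct and follows essentially the same route as the paper: reduce strong validity via \eqref{eq:singleton} (equivalently, the lemma of Section~\ref{SS:def}) to the contour condition $\uprob_\credal\{\pi_Y(\Theta)\leq\alpha\}\leq\alpha$, then apply \eqref{eq:uniform} pointwise in $\theta$ and integrate over any prior in $\credal$. The paper's proof merely states this as an ``immediate consequence'' of \eqref{eq:singleton}; you have only spelled out the iterated-integration step it leaves implicit.
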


\begin{proof}
The result is an immediate consequence of \eqref{eq:singleton}. To prove \eqref{eq:singleton}, note that, by monotonicity, $\uPi_y(\{\theta\}) > \alpha$ if and only if $\uPi_y(A) > \alpha$ for every $A \ni \theta$.  This latter observation is obviously equivalent to \eqref{eq:singleton}. 
\end{proof}

For the second unique feature of the vacuous-prior IM, I consider a notion of conditional validity, one that conditions on the event ``$\Theta \in A$'' which makes the assertion true instead of simply excluding those cases where the assertion is false, as I do in Definition~\ref{def:valid}.  The following is a precise statement of the property I have in mind.  

\begin{defn}
\label{def:cond.valid}
An IM with output $(\lPi_Y, \uPi_Y)$ is {\em conditionally valid}, relative to $\credal$, if 
\begin{equation}
\label{eq:cond.valid}
\sup_{\prior \in \credal} \frac{\int_A \prob_{Y|\theta}\{ \uPi_Y(A) \leq \alpha\} \, \prior(d\theta)}{\prior(A)} \leq \alpha \quad \text{for all $\alpha \in [0,1]$ and $A \in \A$}. 
\end{equation}
There's an analogous bound in terms of $\lPi_Y$, but I won't consider this here. {\em Note:} To avoid trivialities in \eqref{eq:cond.valid}, one can either define ``$0/0$'' to be 0 or restrict the calculation to assertions $A \in \A$ such that $\lprior(A) > 0$.
\end{defn}

The ratio on the left-hand side of \eqref{eq:cond.valid} is simply the conditional probability that $\uPi_Y(A) \leq \alpha$, given $\Theta \in A$, relative to the prior $\prior \in \credal$, hence the name ``conditional validity.''  This conditioning operation is relevant because, as I mentioned above, while Definition~\ref{def:valid} restricts to those $(Y,\Theta)$ pairs for which the assertion ``$\Theta \in A$'' is true, this new condition restricts and renormalizes to those pairs.  That is, the supremum of the numerator  in \eqref{eq:cond.valid} is the quantity that's bounded in Definition~\ref{def:valid}, so bounding the ratio, with $\prior(A) \leq 1$ in the denominator, is a mathematically stronger condition. 

It turns out that it's straightforward to show that IMs which are valid in the vacuous-prior sense of Definition~\ref{def:no.prior.valid} are also conditionally valid for any $\credal$.  

\begin{prop}
\label{prop:cond.valid}
If the IM's output $(\lPi_Y, \uPi_y)$ satisfies \eqref{eq:uniform}, then it's conditionally valid in the sense of Definition~\ref{def:cond.valid} for any $\credal$. 
\end{prop}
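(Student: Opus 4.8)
The plan is to bound the integrand in the numerator of \eqref{eq:cond.valid} pointwise in $\theta$, uniformly over $\theta \in A$, and then integrate. The crucial observation is a monotonicity reduction: for any $\theta \in A$ we have $\{\theta\} \subseteq A$, so by monotonicity of the upper probability $\uPi_Y(\{\theta\}) \leq \uPi_Y(A)$. Consequently the event $\{\uPi_Y(A) \leq \alpha\}$ is contained in the event $\{\uPi_Y(\{\theta\}) \leq \alpha\}$, which gives $\prob_{Y|\theta}\{\uPi_Y(A) \leq \alpha\} \leq \prob_{Y|\theta}\{\uPi_Y(\{\theta\}) \leq \alpha\}$ for every $\theta \in A$.

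First I would apply this inequality and then invoke the hypothesis \eqref{eq:uniform}, which says exactly that $\sup_{\theta \in \TT} \prob_{Y|\theta}\{\uPi_Y(\{\theta\}) \leq \alpha\} \leq \alpha$. Chaining the two bounds yields $\prob_{Y|\theta}\{\uPi_Y(A) \leq \alpha\} \leq \alpha$ for all $\theta \in A$. Integrating this constant-$\alpha$ bound against any $\prior \in \credal$ over the set $A$ gives $\int_A \prob_{Y|\theta}\{\uPi_Y(A) \leq \alpha\}\, \prior(d\theta) \leq \alpha\, \prior(A)$. Dividing by $\prior(A)$ produces a ratio bounded by $\alpha$, and since this bound is free of $\prior$, taking the supremum over $\prior \in \credal$ preserves it, establishing \eqref{eq:cond.valid}.

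There is essentially no obstacle here; the argument is a two-line monotonicity-plus-stochastic-dominance computation, mirroring the sufficiency argument for \eqref{eq:uniform} given in Section~\ref{SS:no.prior}. The only point requiring a word of care is the degenerate case $\prior(A) = 0$, where the ratio is $0/0$; this is dispatched by the convention stated in Definition~\ref{def:cond.valid} (set $0/0 = 0$, or restrict to $A$ with $\lprior(A) > 0$). It is worth emphasizing that the key step never uses any property of the credal set $\credal$ beyond the fact that its members are probability measures, which is precisely why the conclusion holds for \emph{any} $\credal$.
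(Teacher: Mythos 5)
Your proof is correct and is essentially the paper's own argument: both bound the numerator's integrand by $\alpha$ uniformly over $\theta \in A$ via the singleton-monotonicity reduction to \eqref{eq:uniform}, integrate to get $\alpha\,\prior(A)$, then divide and take the supremum over $\prior \in \credal$. You merely make explicit the monotonicity step (and the $0/0$ convention) that the paper leaves implicit by citing \eqref{eq:uniform} directly.
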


\begin{proof}
The integral in the numerator of \eqref{eq:cond.valid} can be bounded as follows:
\[ \int_A \prob_{Y|\theta}\{ \uPi_Y(A) \leq \alpha \} \, \prior(d\theta) \leq \sup_{\theta \in A} \prob_{Y|\theta}\{ \uPi_Y(A) \leq \alpha\} \cdot \prior(A). \]
It follows from \eqref{eq:uniform} that the supremum in the upper bound is no more than $\alpha$.  Now divide by $\prior(A)$ and take supremum over $\prior \in \credal$ to prove the claim. 
\end{proof}

Apparently conditional validity is too strong for other IM constructions.  For example, even a traditional Bayesian formulation, where $\credal$ consists of a single prior distribution $\prior$, is valid in the sense of Definition~\ref{def:valid} but fails to be conditionally valid.  I'll illustrate this numerically below with a simple simulation study.  

Consider a normal mean problem where, given $\Theta=\theta$, $Y$ is distributed as $\prob_{Y|\theta} = \nm(\theta, n^{-1})$, so that $Y$ behaves like the sample mean based on $n$ independent observations from the normal mean model.  The prior distribution for $\Theta$ is $\prior = \nm(0,1)$.  Let $\Pi_y(\cdot) = \prior(\cdot \mid y)$ denote the Bayesian IM based on this single known prior.  Of interest here is the (conditional) distribution function 
\begin{align*}
H_A(\alpha) & = \text{``$\prob\{\Pi_Y(A) \leq \alpha \mid \Theta \in A\}$''} \\
& = \frac{1}{\prior(A)} \int_A \prob_{Y|\theta}\{ \Pi_Y(A) \leq \alpha \} \, \prior(d\theta). 
\end{align*}
In particular, since it relates to conditional validity, the question is: for what assertions $A \subseteq \TT$ is $H_A(\alpha) \leq \alpha$ for all $\alpha$?  To assess this question, I simulate pairs $(Y,\Theta)$ from the aforementioned joint model and approximate $H_A(\alpha)$ using Monte Carlo; here I take $n=10$ but none of the results are specific to that choice.  Figure~\ref{fig:no.cond.valid} shows these Monte Carlo estimates for two assertions: $A=(1,5)$ and $A=(1,1.5)$.  In the former case, clearly $H_A(\alpha)$ is well below the diagonal line, but not for the latter case.  That there exists an assertion $A$ for which the inequality in \eqref{eq:cond.valid} fails, it follows that the Bayesian IM with a known prior is not conditionally valid in the sense of Definition~\ref{def:cond.valid}.  

\begin{figure}[t]
\begin{center}
\subfigure[Assertion: $A=(1,5)$]{\scalebox{0.6}{\includegraphics{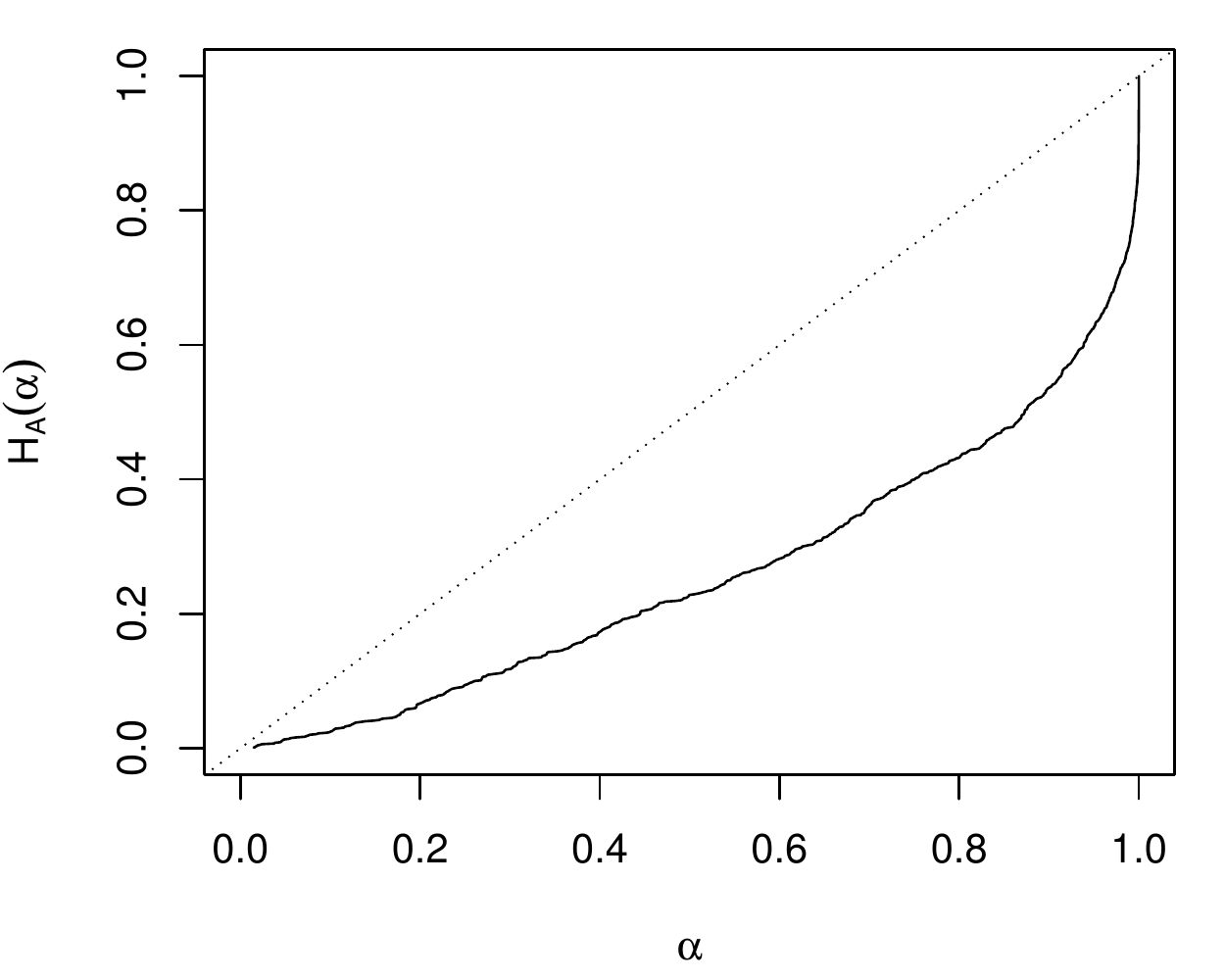}}}
\subfigure[Assertion: $A=(1,1.5)$]{\scalebox{0.6}{\includegraphics{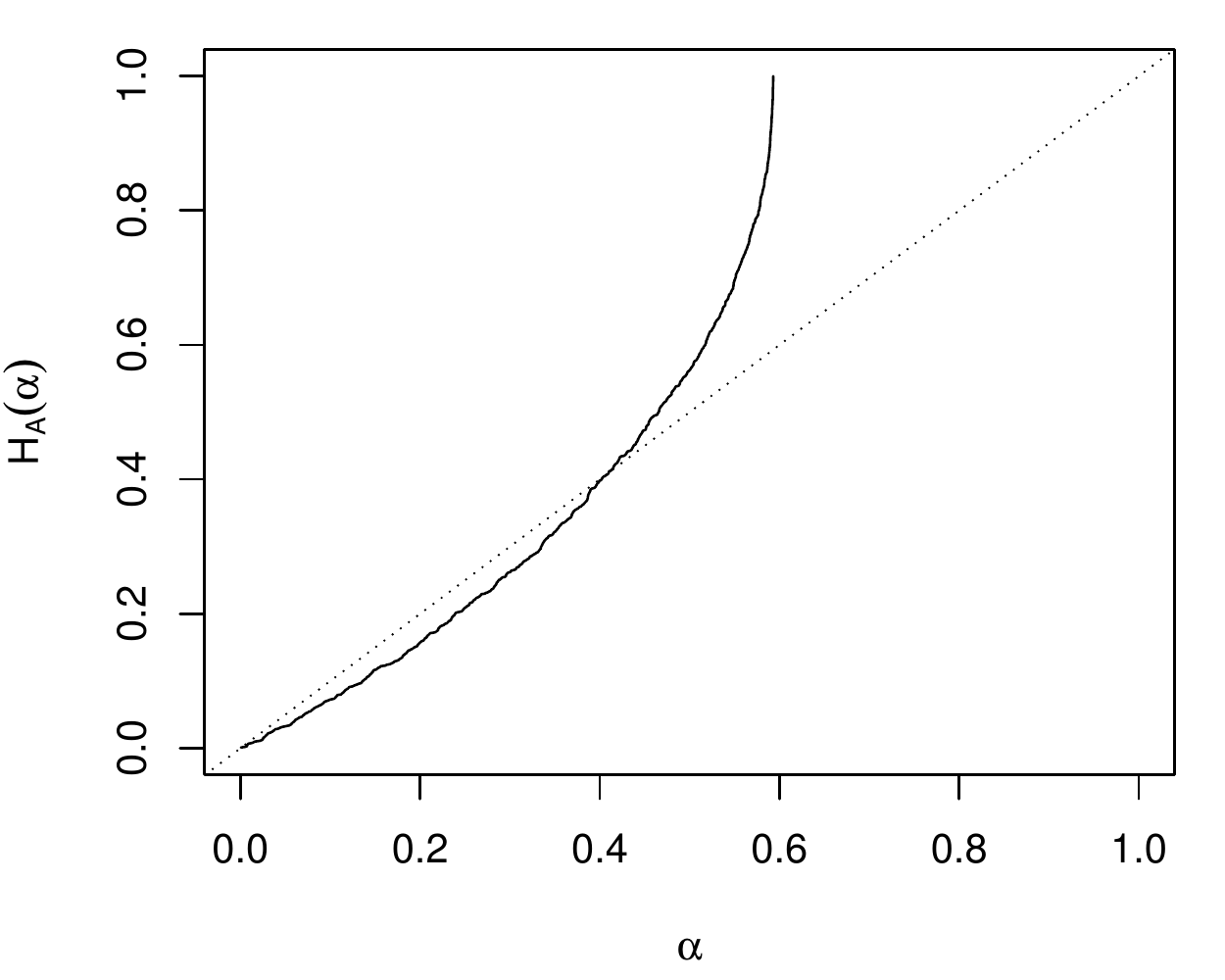}}}
\end{center}
\caption{Monte Carlo estimates of the (conditional) distribution function $H_A(\alpha)$ for the Bayesian IM in the simple normal mean illustration, for two assertions about $\Theta$.}
\label{fig:no.cond.valid}
\end{figure}

That a Bayesian IM generally can't achieve conditional validity even in the known-prior case might be surprising, so it may help for me to describe the related property the Bayesian IM does generally satisfy.  For this explanation, suppose that $Y^n=(Y_1,\ldots,Y_n)$ consists of a collection of $n$ conditionally iid random variables, with $\prob_{Y^n|\theta} = \prob_{Y|\theta}^n$ their joint distribution, given $\Theta=\theta$, and where the marginal distribution of $\Theta$ is $\prior$.  If $\Pi_{Y^n}$ denotes the usual Bayesian posterior distribution for $\Theta$, given $Y^n$, then Doob's theorem \citep[e.g.,][]{doob1949, ghosal.vaart.book, miller.doob} states that $\Pi_{Y^n}(A) \to 1$ with $\prob_{Y^n|\theta}$-probability~1 as $n \to \infty$, for any $\theta$ in a set of $\prior$-probability~1 and for any $A \ni \theta$.  Therefore, 
\[ \prob_{Y^n|\theta}\{\Pi_{Y^n}(A) \leq \alpha\} \to 0, \quad \text{$\prior$-almost all $\theta$, all $\alpha$, and all $A \ni \theta$}, \]
so
\[ \int_A \prob_{Y^n|\theta}\{ \Pi_{Y^n}(A) \leq \alpha \} \, \prior(d\theta) \to 0, \quad \text{as $n \to \infty$, for all $A$}. \]
This implies both ordinary and conditional validity hold trivially, for the Bayesian IM with $\credal = \{\prior\}$, in this large-sample limit.  But I don't think this is satisfactory for the following reason: for any fixed $(\alpha,A)$, validity is achieved in a trivial way as $n \to \infty$, and for any fixed $n$, there are $(\alpha,A)$ pairs for which validity fails, some severely.

\section{Balancing validity and efficiency}
\label{S:efficiency}

\subsection{Objective}

As was shown above, there are a number of different ways that (partial prior) validity can be achieved.  The original vacuous-prior IM, however, achieves both strong and conditional validity for any partial prior.  So if it's already known how to achieve validity (and more) for every prior, then what's the point of considering alternative IM constructions?  The objective is to gain something by incorporating the available partial prior information.  The vacuous-prior IM, by definition, ignores any available partial prior information and, therefore, can't possibly gain.  So in order to compare different IMs that achieve validity, this notion of ``gain'' needs to be explained further.  

The type of gain I have in mind is in terms of what I'll call here {\em efficiency}.  This notion of efficiency is, in my opinion, easy to understand but difficult to formulate in precise mathematical terms.  The upper probability that my IM assigns to assertions needs to be sufficiently large in order to be valid; that is, if my $\uPi_Y$ tends to be too small, then the event ``$\text{$\uPi_Y(A)$ small}$'' may have large probability and then validity fails.  But there is no statistical benefit to the IM assigning upper probabilities any larger than necessary in order to achieve validity.  So, roughly speaking, by efficiency I mean the upper probability is as small as possible.  Like a hypothesis testing rule that always rejects is most efficient---in the sense of having maximal power---this needs to be balanced against something else in order for the test to be practically useful.  So what I'm proposing here is that the upper probabilities should be made as small as possible subject to the validity constraint.  It may help to compare this to some other familiar but related notions.
\begin{itemize}
\item I already mentioned the case of hypothesis testing above, so let's pursue this a bit further.  Of course, a test that assigns p-value 0 to the hypothesis, regardless of the data, will have maximal power, but no one would use it.  By allowing some amount of Type~I error, which amounts to letting the p-value be positive, at least for some data sets, a more useful test can be achieved.  But if I already have a p-value function, say, $p(y)$, that has an acceptable Type~I error, then I'd have no motivation to replace it by something larger, say, $p^{1/2}(y)$. It's in this sense that efficiency seeks upper probabilities no larger than necessary to achieve validity.  
\item Possibility theory has a notion of {\em specificity} \citep[e.g.,][]{dubois.prade.1986} that seems closely related to efficiency.  That is, for two possibility measures having contours $\pi$ and $\pi'$, respectively, the former is more specific than the latter if $\pi \leq \pi'$ pointwise.  This is in line with the notion of efficiency that I'm considering, since pointwise smaller plausibility contours means smaller upper probabilities.  The problem, however, is that I'm not strictly focused on IMs that take the form of possibility measures, so specificity may not be applicable.  More importantly, this notion of specificity doesn't fully capture the characteristic that I'm after.  To see this, consider an example.  Let $y=(y_1,\ldots,y_n)$ denote a sample of size $n$ from $\nm(\theta, 1)$.  Consider two vacuous-prior, consonant IMs with contours 
\begin{align*}
\pi_y(\vartheta) & = 2\{ 1 - \Phi(n^{1/2}|\hat\theta_y - \vartheta|)\}, \\
\pi_y'(\vartheta) & = 2\{ 1 - G_n(|\hat\theta_y' - \vartheta|)\}, 
\end{align*}
where $\hat\theta_y$ is the sample mean, $\hat\theta_y'$ is the sample median, and $G_n$ is the distribution function of the sample median based on $n$ iid samples from $\nm(0,1)$.  It's easy to verify that both of these IMs are valid.  We'd all agree that the former IM is more efficient than the latter, since we know that the sample mean is a better (``more efficient'') estimator of $\theta$ in this example, but the contour functions don't generally satisfy the pointwise ordering, i.e., $\pi_y \not\leq \pi_y'$, at least for some $y$, so I can't say that former is more specific than the latter.  
\end{itemize}
Since what I mean by efficiency here is relatively clear at an intuitive level, I'm satisfied to proceed without a mathematically precise formulation.  

Let's consider the vacuous-prior scenario.  How does the vacuous-prior IM construction, as described in Sections~\ref{SS:original.im} and \ref{SS:revisited} above, stack up to alternative methods in terms of efficiency?  The generalized Bayes formulation described briefly in Section~\ref{SS:gbayes} is attractive for a number of reasons, both statistical and behavioral.  In a vacuous-prior situation, however, it leaves something to be desired.  That is, the natural way to mathematically encode prior ignorance is via a so-called vacuous prior, one where the credal set $\credal=\credal_{\text{all}}$ consists of all possible prior distributions on $\TT$.  It is easy to see, however, that the vacuous prior is supremely informative in the sense that, there is no data set such that the corresponding generalized Bayes posterior is not also vacuous \citep[e.g.,][Theorem~4.8]{gong.meng.update}.  So the naive vacuous-prior version of generalized Bayes is practically useless---learning from the data is impossible.  While generalized Bayes seems hopeless in this context, as mentioned above, \citet{walley2002} remarkably showed that the generalized Bayes rule applied to a variation on the vacuous prior---one with credal set $\credal_W = (1-\eps) \, \prior_0 + \eps \, \credal_{\text{all}}$, where $\prior_0$ is a fixed prior on $\TT$, $\credal_{\text{all}}$ is the aforementioned vacuous prior credal set, and $\eps \in (0,\frac12)$---yields a non-trivial IM that's even effectively valid.  What's especially impressive about Walley's solution is that it achieves virtually all desirable statistical properties at once: (near) validity, coherence, likelihood principle, etc.  However, the cost of this achievement is a general loss of efficiency.  For example, in a normal mean problem, Walley's interval estimates have width of the order $(\log n)^{1/2} n^{-1/2}$, whereas the vacuous-prior IM's intervals \citep[e.g.,][]{imbook} have width of the order $n^{-1/2}$, just like classical confidence intervals.  Therefore, in the vacuous-prior setting, I think it's clear that the vacuous-prior IM 
is most efficient.  

Recall the original question motivating this discussion: {\em if the vacuous-prior IM is (strongly and conditionally) valid relative to any $\credal$, then why bother considering alternative IMs that incorporate the partial prior information?}  How do the alternative IM constructions above, which incorporate the partial prior information, stack up with the vacuous-prior IM that ignores the prior information?  Since the ``construction'' in Section~\ref{SS:dumb} doesn't meet all the conditions needed to call it an IM construction, I'll not consider this here.  That leaves the generalized Bayes and information aggregation constructions.  While both of these constructions incorporate partial prior information in ways that seem intuitively reasonable, plus they achieve the desired validity, my claim is that they do so in a way that's too conservative.  That is, they lead to {\em no efficiency gains compared to the already valid vacuous-prior IM}.  There is some support for the claim concerning the generalized Bayes IM construction in the existing literature.  Specifically, as presented in \citet{kyburg1987} and \citet{gong.meng.update}, the generalized Bayes rule cannot contract---which is why it achieves validity and no sure loss---and, moreover, it tends to dilate.  Therefore, generalized Bayes can't gain in efficiency and, in fact, would tend to be even more conservative than the vacuous-prior IM.  For the information aggregation strategy, that it can't gain in efficiency compared to the vacuous-prior IM should be clear from its definition, since the corresponding plausibility contours are inflated before being combined.  This will be demonstrated numerically in Section~\ref{SS:running} below.  Since these approaches to incorporating the partial prior information do not lead to a gain in efficiency, I'll need to consider some alternative constructions in Section~\ref{S:other}.


\subsection{Example}
\label{SS:running}

Suppose that the conditional distribution of $Y$, given $\Theta=\theta$, is $\prob_{Y|\theta} = \nm(\theta, n^{-1})$ for some known precision index $n \geq 1$ that plays the role of a sample size, and $\theta \in \TT = \RR$.  The situation I'm interested in here is one where a subject-matter expert makes a claim like ``I'm $100(1-\beta)$\% sure that $\Theta$ belongs to $[a,b]$,'' for a given interval $[a,b]$ and given probability $\beta \in [0,1]$, where $\beta$ would typically be relatively small. Such a situation is surely common in real-life applications, but rarely seen in the literature because prior information this vague is either completely ignored or embellished upon with further assumptions.  Following the suggestion in \citet{imexpert}, my proposal is to encode this partial prior information via a random set
\[ \T = \begin{cases} [a,b] & \text{with probability $1-\beta$} \\ \RR & \text{with probability $\beta$}. \end{cases} \]
This (nested) random set has upper probability given by 
\[ \uprior(A) = \prob_\T(\T \cap A \neq \varnothing) = \begin{cases} 0 & \text{if $A = \varnothing$} \\ 1 & \text{if $A \cap [a,b] \neq \varnothing$} \\ \beta & \text{otherwise}. \end{cases} \]
Since $\uprior$ is also a possibility measure, it can be described by the plausibility contour 
\[ q(\vartheta) = \beta + (1-\beta) \, 1\{\vartheta \in [a,b]\}, \quad \vartheta \in \RR. \]
For the data-dependent, vacuous-prior IM, if $Y=y$ is the observed data, then the (nested) random set $\TT_y(\U)$ is  
\begin{align*}
\TT_y(\U) & = y - \U \\
& = [y - |\tilde U|, \, y + |\tilde U|], \quad \text{where $\tilde U \sim \nm(0,n^{-1})$}. 
\end{align*}
This, too, defines a possibility measure, so the corresponding upper probability is 
\[ \uPi_y(A) = \sup_{\vartheta \in A} \pi_y(\vartheta), \]
where the plausibility contour function is
\[ \pi_y(\vartheta) = 2\{1 - \Phi(n^{1/2}|y - \vartheta|)\}, \quad \vartheta \in \RR, \]
with $\Phi$ the $\nm(0,1)$ distribution function.  I didn't clearly explain why the vacuous-prior construction is as presented above, in particular, why I chose the generator random set 
\[ \U = [-|\tilde U|, |\tilde U|], \quad \tilde U \sim \nm(0, n^{-1}). \]
I've commented on this in more detail elsewhere so I'll not repeat myself here.  As justification for this choice here, let me just say that if one extracts the $100(1-\alpha)$\% plausibility region from this vacuous-prior IM, one finds that 
\[ \{\vartheta: \pi_y(\vartheta) > \alpha\} = y \pm \Phi^{-1}(1-\tfrac{\alpha}{2}) \, n^{-1/2}, \]
and the right-hand side is the textbook $100(1-\alpha)$\% z-confidence confidence interval, which is the shortest interval estimator that achieves the nominal coverage probability.  

Since we already have a good/optimal solution in the vacuous-prior IM, it makes sense to consider what, if anything, is gained through the use of one of the aforementioned IMs that incorporates the available partial prior information.  For this first version of the example, I'll just compare the vacuous-prior IM and that based on the simple information aggregation rule in Section~\ref{SS:fusion}, with $\pi_{y,\credal}$ defined as in \eqref{eq:fusion}.  This choice is mainly for visualization purposes, since both are expressed directly in terms of the plausibility contours and, in particular, can be visualized as such.  In this experiment, we take $a=1$, $b=2$, and $\beta=0.1$; so the expert is saying she's ``90\% sure that $\theta$ is in $[1,2]$.''  Figure~\ref{fig:ab.example1} shows plots of the corresponding plausibility contours for four different values of $y$, with $n=10$.  The particular choices of $y$ are based on the interpretations:
\begin{itemize}
\item[(a)] $y=1.5$: data consistent with the prior;
\vspace{-2mm}
\item[(b)] $y=1.1$: data moderately consistent with the prior;
\vspace{-2mm}
\item[(c)] $y=0.9$: data moderately inconsistent with the prior;
\vspace{-2mm}
\item[(d)] $y=0.5$: data inconsistent with the prior.
\end{itemize} 
In two ``moderate'' cases we see some signs of a gain from incorporating the partial prior information, but this wouldn't be reflected in, say, the 90\% or 95\% plausibility intervals.  In fact, in all four cases, the information aggregation would give wider plausibility intervals at those levels of significance.  In Panel~(a), where the data and prior are in perfect agreement, we'd hope to see some gain in efficiency, but we actually see efficiency loss.  Finally, in Panel~(d), where data and prior disagree, the incompatibility introduced in the information aggregation rule can be seen by noticing that now the curve doesn't reach the value 1.  This could be renormalized but then it's clear that the corresponding IM would be very inefficient compared to the vacuous-prior IM.  The take-away message is that, while (strongly) valid, the information aggregation rule does not serve the desired purpose of providing an efficiency gain through the incorporation of partial prior information.  Therefore, there is a need to consider some alternative constructions. 

\begin{figure}[t]
\begin{center}
\subfigure[$y=1.5$]{\scalebox{0.6}{\includegraphics{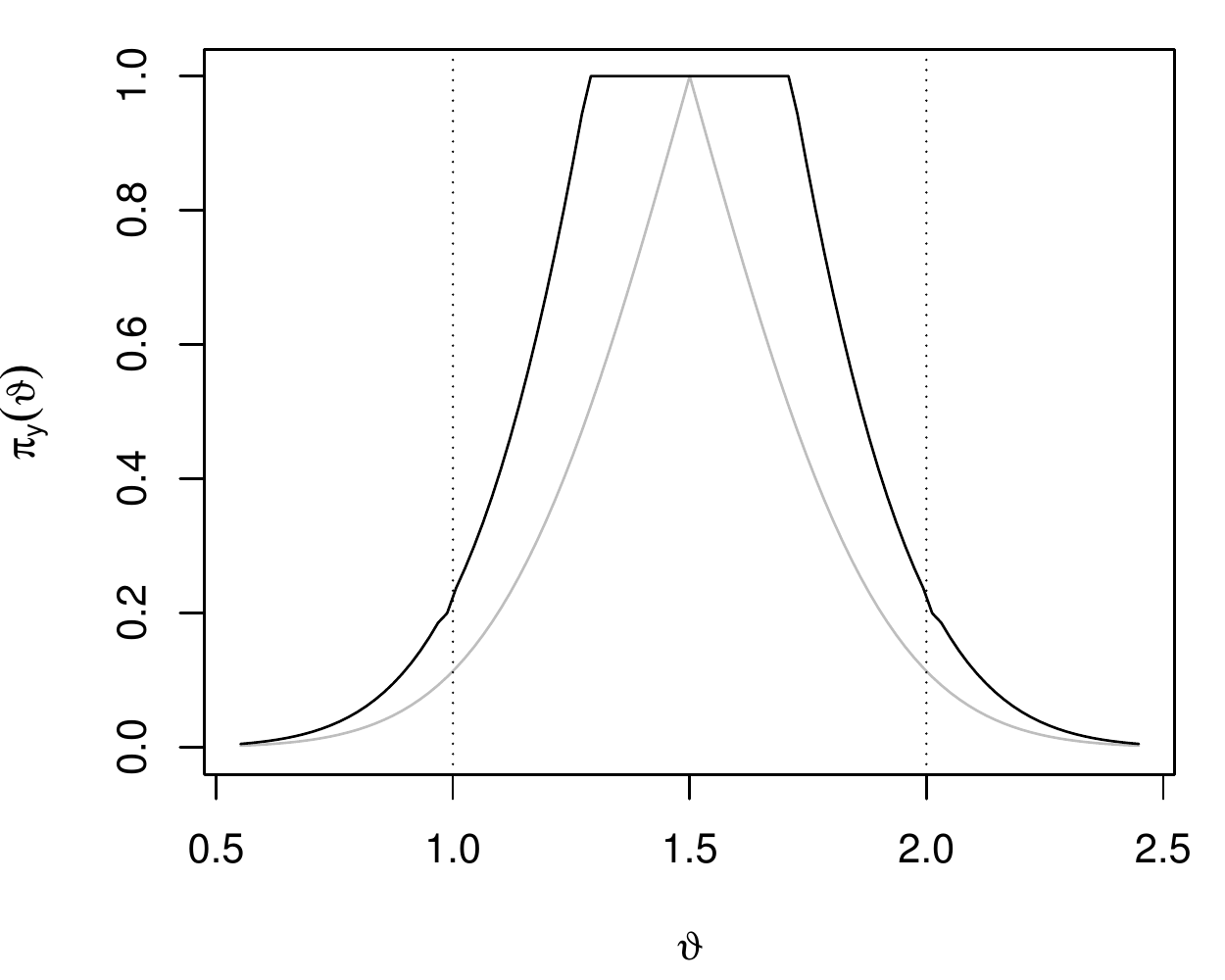}}}
\subfigure[$y=1.1$]{\scalebox{0.6}{\includegraphics{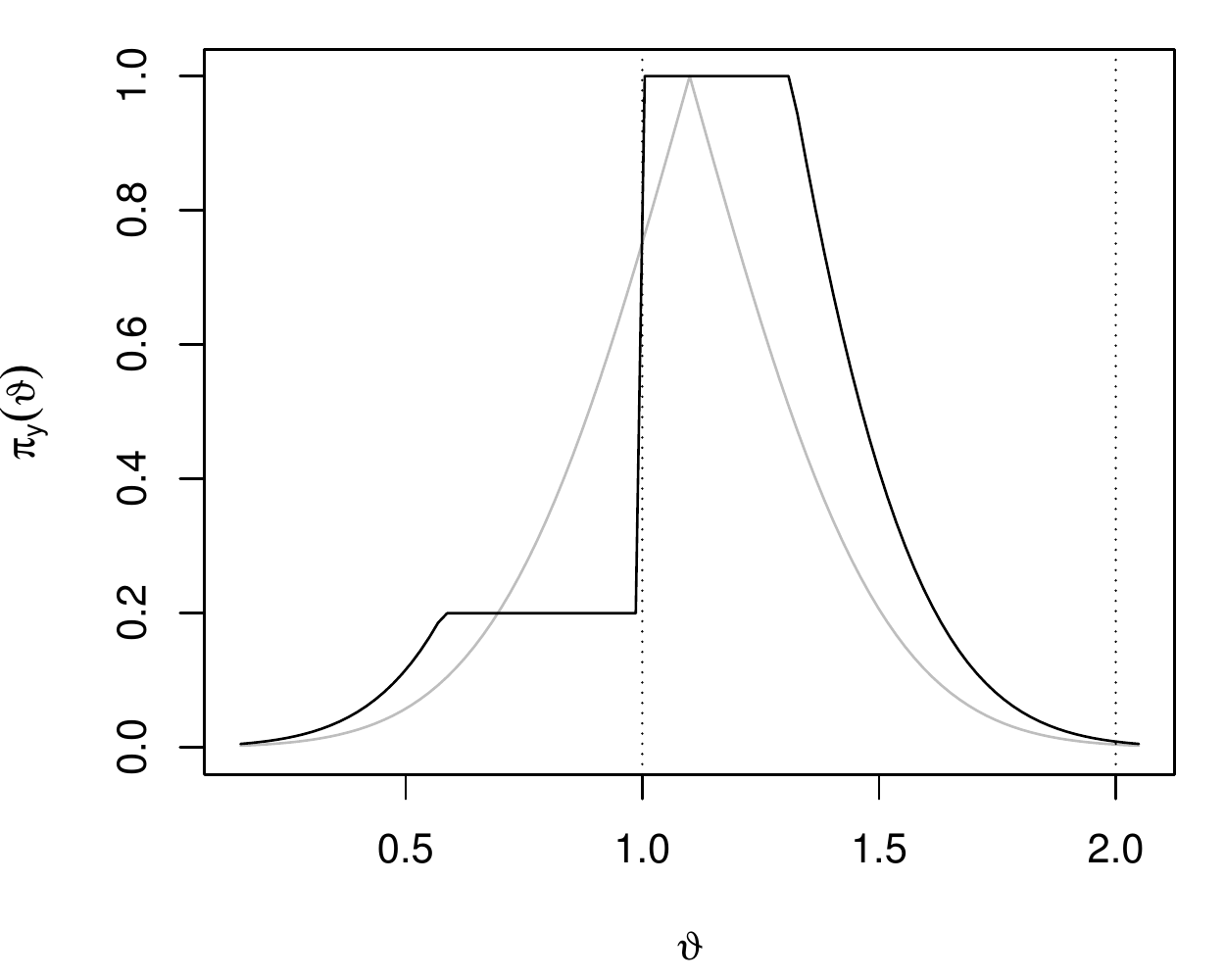}}}
\subfigure[$y=0.9$]{\scalebox{0.6}{\includegraphics{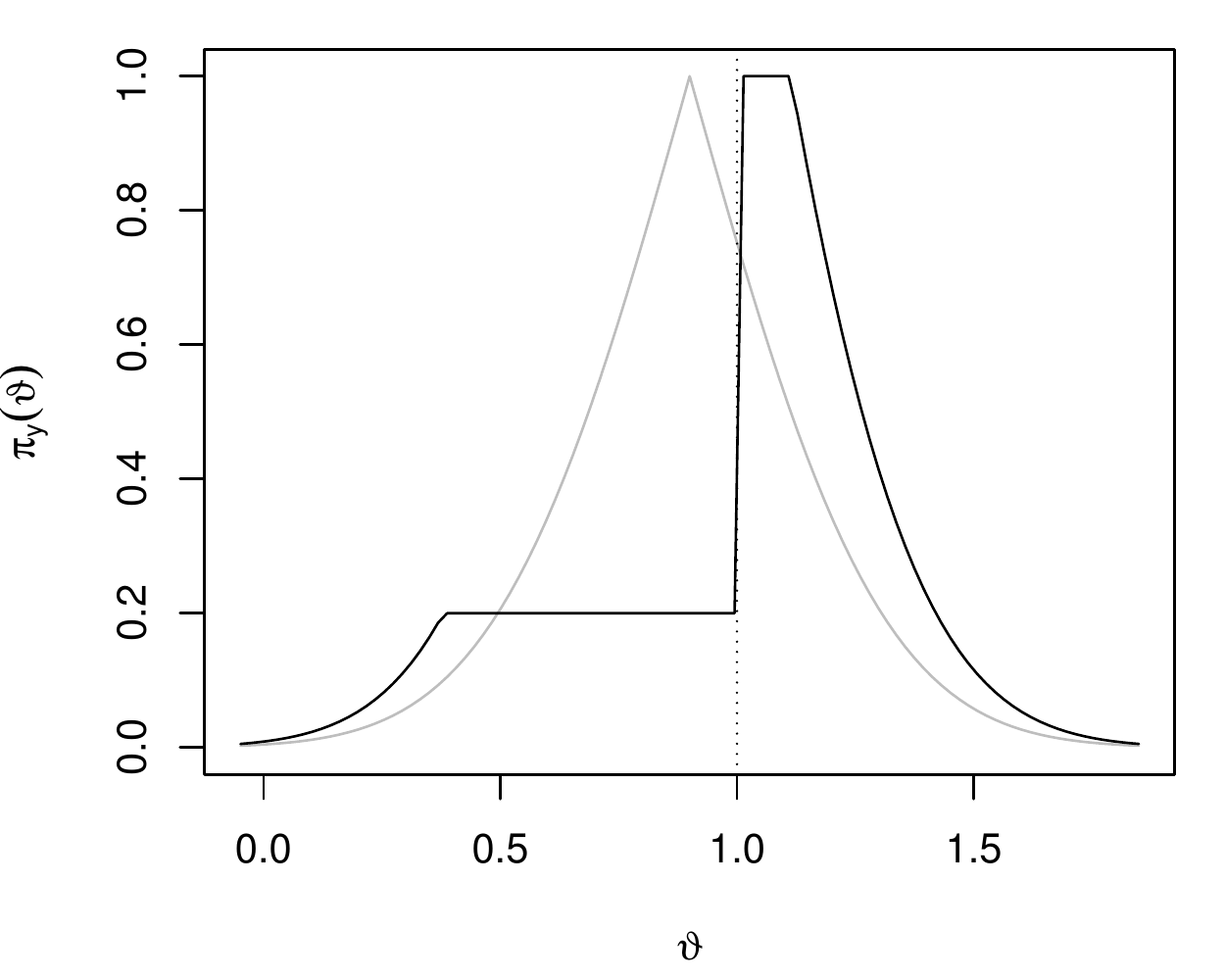}}}
\subfigure[$y=0.5$]{\scalebox{0.6}{\includegraphics{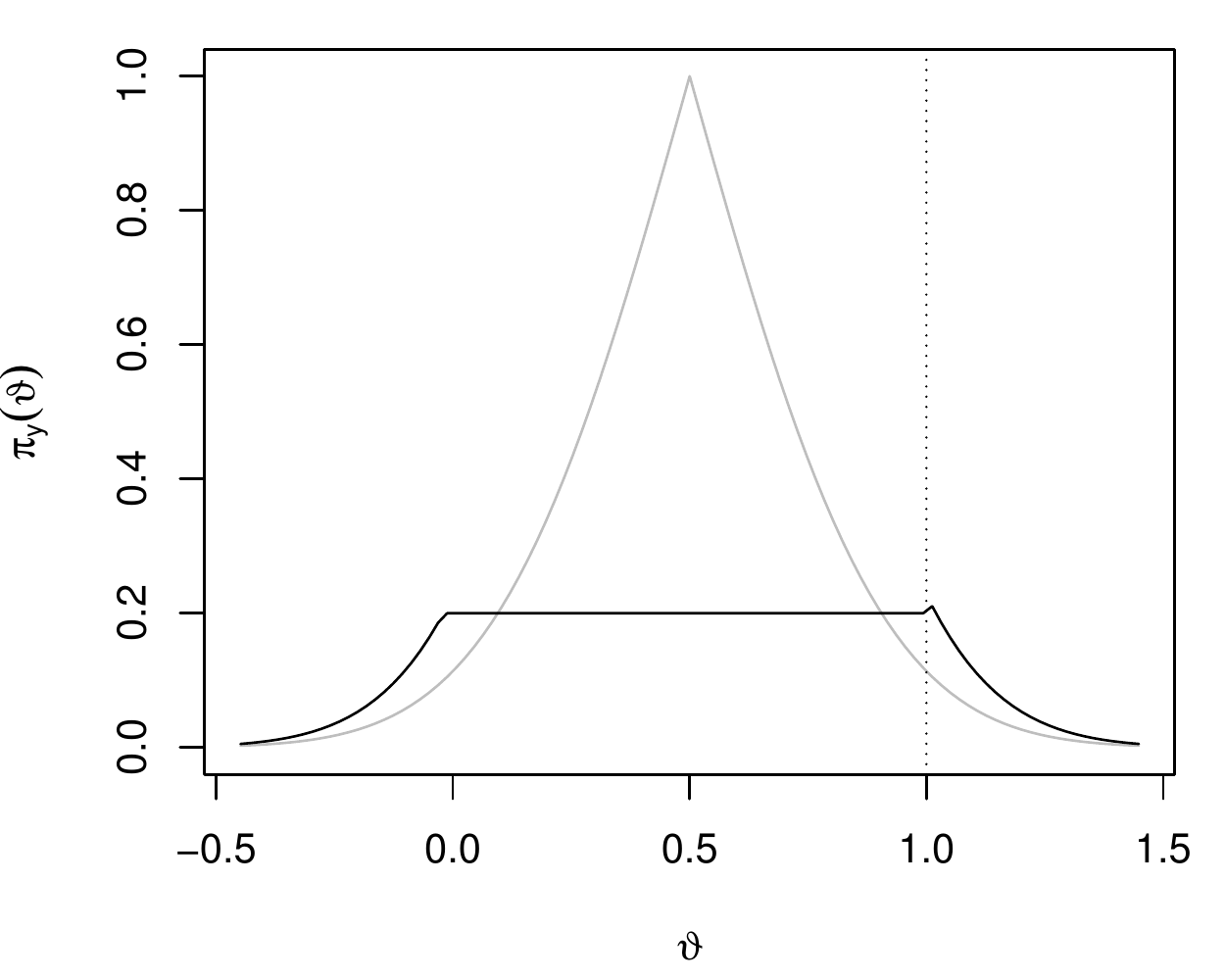}}}
\end{center}
\caption{Plausibility contour plots for the two different IMs discussed in Section~\ref{SS:running}, for four different values of the data $y$. Gray line is based on the vacuous-prior IM; black line is the based on the information aggregation rule in \eqref{eq:fusion}.}
\label{fig:ab.example1}
\end{figure}

\begin{remark0}
After reading a previous draft of this manuscript, Dominik Hose pointed out to me a different information aggregation strategy.  This idea has its roots in \citet{destercke.etal.2009} and elsewhere, and is further developed in \citet{hose2022thesis}.  As Dominik explains, this alternative rule fuses information in the prior contour $q$ with that in the vacuous-prior IM's contour $\pi_y$ according to the rule
\begin{equation}
\label{eq:fusion.2}
\pi_{y,\credal}(\vartheta) = [1 - \{1 - q(\vartheta)\}^2] \wedge [1 - \{1 - \pi_y(\vartheta)\}^2], \quad \vartheta \in \TT. 
\end{equation}
This approach has several advantages compared to that described in Section~\ref{SS:fusion}: first, it doesn't depend on a tuning parameter $w$; second, it tends to be at least slightly more efficient; and, third, it's also strongly valid, at least in some cases.  To see this latter claim, for $\pi_{Y,\credal}$ in \eqref{eq:fusion.2}, 
\begin{align*}
\pi_{Y,\credal}(\Theta) \leq \alpha & \iff 1 - \{1 - q(\Theta)\}^2 \leq \alpha \text{ or } 1 - \{1 - \pi_Y(\Theta)\}^2 \leq \alpha \\
& \iff q(\Theta) \wedge \pi_Y(\Theta) \leq 1-(1-\alpha)^{1/2}.
\end{align*}
Then 
\begin{align*}
\prob_{Y,\Theta}\{\pi_{Y,\credal}(\Theta) \leq \alpha \} & = \prob_{Y,\Theta}\{\pi_Y(\Theta) \leq 1-(1-\alpha)^{1/2}\} + \prob_\Theta\{q(\Theta) \leq 1-(1-\alpha)^{1/2}\} \\
& \qquad - \prob_{Y,\Theta}\{q(\Theta) \vee \pi_Y(\Theta) \leq 1-(1-\alpha)^{1/2}\}.
\end{align*}
It's clear that strong validity of the vacuous-prior IM isn't enough to get strong validity of the fused version \eqref{eq:fusion.2}, but this can work in certain special cases.  For example, if the first two terms, which are generally both $\leq 1-(1-\alpha)^{1/2}$, happen to satisfy 
\[ \prob_{Y,\Theta}\{\pi_Y(\Theta) \leq 1-(1-\alpha)^{1/2}\} = \prob_\Theta\{q(\Theta) \leq 1-(1-\alpha)^{1/2}\} = 1-(1-\alpha)^{1/2}, \]
and if the random variables $\pi_Y(\Theta)$ and $q(\Theta)$ are independent or suitably positively correlated such that 
\begin{align*}
\prob_{Y,\Theta}\{q(\Theta) \vee \pi_Y(\Theta) \leq 1-(1-\alpha)^{1/2}\} & \geq \prob_{Y,\Theta}\{\pi_Y(\Theta) \leq 1-(1-\alpha)^{1/2}\} \\
& \qquad \times \prob_\Theta\{q(\Theta) \leq 1-(1-\alpha)^{1/2}\}, 
\end{align*}
then I get 
\[ \prob_{Y,\Theta}\{\pi_{Y,\credal}(\Theta) \leq \alpha \} \leq 2\{1 - (1-\alpha)^{1/2}\} - \{1-(1-\alpha)^{1/2}\}^2 = \alpha, \]
which implies strong validity.  There are certain cases where these conditions can be verified, for example, in smooth location models where $\pi_Y(\Theta)$ is a function of only the error in ``$Y = \Theta + \text{error}$,'' which is independent of $\Theta$.  For the running example considered here, the independence assumption does hold, and my numerical results (not shown) confirm the claim that this strategy is more efficient.  However, since these conditions can't generally hold, I won't discuss this strategy any further. 
\end{remark0}

\section{Alternative constructions}
\label{S:other}

\subsection{Setup}

As discussed above, the vacuous-prior IM achieves strong validity properties uniformly over all credal sets, so really the only statistical motivation for introducing the partial prior information is if, by doing so, the efficiency improves.  In this section, I present two reasonable strategies for combining the vacuous-prior IM with partial prior information, such that the resulting IM is valid (at least potentially so) and more efficient than the strategies presented above.  The two strategies I employ here are based on different combination rules, namely, Dempster's rule of combination and another that is commonly used in the possibility theory literature but I don't believe has a name.  

Next I make three quick remarks to fix the scope of what's presented below.  
\begin{itemize}
\item First, I'm treating the vacuous-prior IM, which depends on data and statistical model only, and the partial prior information as two independent pieces of information about $\theta$ to be combined.  That is, I'm taking the construction of a vacuous-prior IM as a given first step, and then considering how the partial prior information can be incorporated in such a way that validity is maintained but efficiency is gained.  Instead, one could start at a more preliminary step with the data, statistical model, and partial prior and consider pooling information together from there.  This is also reasonable, but I'll not consider this perspective here.  
\item Second, the partial prior information, encoded in the credal set $\credal$, will be described in terms of a random set $\T \sim \prob_\T$, taking values in the power set of $\TT$, independent of the ingredients used to construct the vacuous-prior IM.  In this case, the prior upper probability is given by 
\[ \uprior(A) = \prob_\T(\T \cap A \neq \varnothing),  \quad A \subseteq \TT. \]
Furthermore, I'll assume that this random set is nested, so that $\uprior$ satisfies the properties of a possibility measure.  Of course, there may be applications where this assumption wouldn't make sense, but there are some cases where it does, so this is a reasonable assumption for this first investigation.  
\item Third, this is not an exhaustive list of combination rules that might work for this purpose, so there may very well be better strategies than those considered below.  Some further remarks on this are presented in Section~\ref{S:discuss}.  
\end{itemize}

\subsection{Dempster's combination rule}
\label{SS:dempster}

In addition to the setup described above, with a (nested) random set $\T \sim \prob_\T$ that encodes the partial prior information about $\theta$, recall also that the vacuous-prior IM for $\theta$ can be described by a (nested) random set $\TT_y(\U)$, a data-dependent mapping of the random set $\U \sim \prob_\U$ in an auxiliary variable space $\UU$.  Since $\T$ and $\U$ are (naturally) treated as independent, we can employ Dempster's rule of combination to construct a new IM that incorporates both data and partial prior information as follows:  
\[ \uPi_{y,\credal}(A) = \frac{\prob_{\T,\U}\{ \TT_y(\U) \cap \T \cap A \neq \varnothing\}}{\prob_{\T, \U}\{\TT_y(\U) \cap \T \neq \varnothing\}}, \quad A \subseteq \TT. \]
This defines a perfectly reasonable IM, a data- and prior-dependent quantification of uncertainty about the unknown $\theta$.  In fact, the original, vacuous-prior IM is a special case, one where the prior random set $\T$ is almost surely equal to $\TT$. It's common to assume that the support of $\T$ is finite; see the example in Section~\ref{SS:running}, discussed further below.  Then the prior uncertainty quantification is determined by a probability mass function, $m$, defined on the finite support of $\T$, so that 
\[ \uprior(A) = \prob_\T(\T \cap A \neq \varnothing) = \sum_{T: T \cap A \neq \varnothing} m(T). \]
In this case, the Dempster's rule-based IM can be re-expressed as 
\begin{equation}
\label{eq:dempster}
\uPi_{y,\credal}(A) = \frac{\sum_{T: A \cap T \neq \varnothing} m(T) \, \uPi_y(A \cap T)}{\sum_T m(T) \, \uPi_y(T)}, \quad A \subseteq \TT. 
\end{equation}
The approach that combines the vacuous-prior IM and the partial prior via Dempster's rule is very natural in this case.  The questions are (a)~is it valid and, (b)~if so, is it also efficient?  In \citet{imexpert}, it was shown empirically that $\uPi_{Y,\credal}$ in \eqref{eq:dempster} doesn't generally satisfy the vacuous-prior validity property in Definition~\ref{def:no.prior.valid}.  This is not especially surprising, it's arguably an unfair comparison in the present context,\footnote{In \citet{imexpert}, the focus was on situations where the (partial) prior information was to be used but not fully trusted, so the vacuous-prior validity property was a more natural benchmark than here where I'm taking the prior information is genuine, albeit vague/incomplete.} so, on it's own, this is not a criticism of Dempster's rule.  However, it does suggest that establishing validity for Dempster's rule \eqref{eq:dempster} might be delicate. 

Unfortunately, a general proof of validity presently escapes me.  I can, however, prove $\A'$-validity for a sub-collection $\A'$ of all possible assertions.  I give some further remarks on this below. To define this sub-collection of assertions, suppose without loss of generality that the focal elements of the prior random set $\T$ are listed in increasing order, i.e., 
\[ T_1 \subset T_2 \subset \cdots \subseteq \TT. \]
Let the assertion $A$ be such that
\begin{equation}
\label{eq:dempster.restrict}
\text{there exists $k \geq 1$ with $A \cap T_j = \varnothing$ for $j < k$ and $A \subseteq T_j$ for $j \geq k$}. 
\end{equation}
Note that $k$ can vary with $A$.  Denote the subclass of all assertions for which \eqref{eq:dempster.restrict} holds as $\A_\credal'$, with the subscript indicating the dependence of this sub-collection on the focal elements that contribute to the definition of $\credal$.  In words, $\A_\credal'$ consists of assertions that are completely contained in $T_k \setminus T_{k-1}$ for some $k \geq 1$.  In the example presented in Section~\ref{SS:running}, the two focal elements are $T_1 = [a,b]$ and $T_2 = \RR$, so \eqref{eq:dempster.restrict} is satisfied for assertions $A$ such that $A \subseteq [a,b]$ or $A \subseteq [a,b]^c$, that is, $A$ can't have non-empty intersection with both $[a,b]$ and its complement.  This collection is generally non-empty, as it definitely includes all the singletons.  It's precisely the sub-collection $\A_\credal'$ of assertions for which I can verify that Dempster's rule as in \eqref{eq:dempster} combines the prior information in $\credal$ with the vacuous-prior IM in a way that achieves validity.

\begin{prop}
\label{prop:dempster}
Let $\credal$ be a partial prior encoded in a random set $\T$ with focal elements as described above.  Then the IM in \eqref{eq:dempster} that combines prior and data according to Dempster's rule is $\A_\credal'$-valid in the sense of Definition~\ref{def:valid} for $\A_\credal'$ defined by \eqref{eq:dempster.restrict}. 
\end{prop}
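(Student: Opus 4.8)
The plan is to obtain $\A_\credal'$-validity as an immediate corollary of the assertion-wise dominance result in Proposition~\ref{prop:dumb}. Concretely, I would verify that for every $A \in \A_\credal'$ the Dempster's-rule combination \eqref{eq:dempster} satisfies the dominance inequality \eqref{eq:dumb}, namely $\uPi_{y,\credal}(A) \geq \uPi_y(A)\,\uprior(A)$ for all $y \in \YY$; the conclusion then follows at once by invoking Proposition~\ref{prop:dumb} with $\A' = \A_\credal'$.

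The key computation exploits the special structure of the restricted class. First I would fix a nonempty $A \in \A_\credal'$, so that (writing $T_0 = \varnothing$ to cover the case $k=1$) there is a $k \geq 1$ with $A \cap T_j = \varnothing$ for $j < k$ and $A \subseteq T_j$ for all $j \geq k$. The whole point of the restriction \eqref{eq:dempster.restrict} is that, for every focal element $T_j$, the intersection $A \cap T_j$ is either empty or equal to $A$ itself. Consequently, in the numerator of \eqref{eq:dempster} only the terms with $j \geq k$ survive, and each of them contributes $m(T_j)\,\uPi_y(A \cap T_j) = m(T_j)\,\uPi_y(A)$. Factoring out $\uPi_y(A)$, the numerator collapses to $\uPi_y(A) \sum_{j \geq k} m(T_j)$; and since $T_j \cap A \neq \varnothing$ precisely when $j \geq k$, the remaining sum is exactly $\uprior(A)$. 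Thus the numerator equals $\uPi_y(A)\,\uprior(A)$ on the nose.

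Next I would bound the denominator. Because $\uPi_y(T_j) \leq 1$ for every $j$ and $\sum_j m(T_j) = 1$, the normalizing constant $\sum_j m(T_j)\,\uPi_y(T_j)$ is at most $1$ (and strictly positive, under the standing assumption that Dempster's rule is well-defined, i.e.\ the conflict mass is not total). Dividing the exact numerator $\uPi_y(A)\,\uprior(A)$ by a denominator no larger than $1$ yields $\uPi_{y,\credal}(A) \geq \uPi_y(A)\,\uprior(A)$, which is exactly \eqref{eq:dumb} restricted to $\A_\credal'$. (Note also that $\uprior(A) \geq m(T_k) > 0$, so the positivity hypothesis of Proposition~\ref{prop:dumb} is automatic here.) Applying Proposition~\ref{prop:dumb} then delivers the $\A_\credal'$-validity claim.

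The main obstacle is conceptual rather than technical: the entire argument hinges on recognizing that the defining condition \eqref{eq:dempster.restrict} is precisely what forces $A \cap T_j \in \{\varnothing, A\}$, which is what makes the Dempster numerator factor cleanly into the product $\uPi_y(A)\,\uprior(A)$. For a general assertion $A$ that straddles the boundary of some focal element, the terms $\uPi_y(A \cap T_j)$ no longer reduce to $\uPi_y(A)$, the numerator fails to factor, and this dominance-based route simply breaks down---which is exactly why, with this method, validity can be secured only on the sub-collection $\A_\credal'$ and not on all of $\A$.
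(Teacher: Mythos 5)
Your proof is correct and follows essentially the same route as the paper's: both establish the dominance inequality \eqref{eq:dumb} on $\A_\credal'$ by exploiting that \eqref{eq:dempster.restrict} forces $A \cap T_j \in \{\varnothing, A\}$, bound the Dempster denominator by $1$, and then invoke Proposition~\ref{prop:dumb}. Your observation that the numerator equals $\uPi_y(A)\,\uprior(A)$ exactly (since $\sum_{j \geq k} m(T_j) = \uprior(A)$ for nonempty $A \in \A_\credal'$) is a slight sharpening of the paper's inequality chain through $\uprior(T_k) \geq \uprior(A)$, but the decomposition and key lemma are the same.
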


\begin{proof}
Let $A \in \A_\credal'$, and let $k$ be as specified in \eqref{eq:dempster.restrict}. A first immediate observation is that the vacuous-prior IM satisfies
\[ \uPi_y(A \cap T_j) = \begin{cases} 0 & \text{if $j < k$} \\ \uPi_y(A) & \text{if $j \geq k$}. \end{cases} \]
A less immediate observation is that, for the same $k$, 
\[ \sum_{j \geq k} m(T_j) = \uprior(T_k) \geq \uprior(A). \]
Putting these two together gives the following bound
\begin{align*}
\uPi_{y,\credal}(A) & \geq \sum_j m(T_j) \, \uPi_y(A \cap T_j) \\
& = \sum_{j \geq k} m(T_j) \, \uPi_y(A) \\
& = \uPi_y(A) \, \uprior(T_k) \\
& \geq \uPi_y(A) \, \uprior(A),
\end{align*}
where the first inequality follows because the denominator in \eqref{eq:dempster} is no more than 1; see Section~\ref{SS:remarks}. This is precisely the dominance property in \eqref{eq:dumb}.  Since this holds for all $A \in \A_\credal'$, the claim here follows by Proposition~\ref{prop:dumb}.
\end{proof}

The need to restrict this result to $A \in \A_\credal'$ is unexpected, but it's not clear to me at present how to get around this; see Section~\ref{SS:remarks} for explanation of the difficulties.  However, this restriction aligns with that implicitly introduced in \citet{leafliu2012}.  There, they considered the case where $\T \equiv \TT_0$, with $\TT_0$ a fixed proper subset of $\TT$; in other words, the partial prior information said only that $\theta$ is sure to be in $\TT_0$. In that case, $\A_\credal'$ contains all assertions that are subsets of $\TT_0$, and it's for those assertions that \citet{leafliu2012} were able to prove that the vacuous-prior IM, combined with this specific form of prior information via Dempster's rule, was valid.  So, in effect, Proposition~\ref{prop:dempster} generalizes their result.  The difference is that, for the extreme form of prior information considered in \citet{leafliu2012}, this restriction is entirely intuitive; in my more general case, I don't presently have any clear intuition for this restriction.


\subsection{Consonance-preserving combination rule}
\label{SS:consonant}

In the setup described above, both the prior and the vacuous-prior IM are possibility measures, or consonant plausibility functions, that can be described by the corresponding plausibility contour functions, $q$ and $\pi_y$, respectively.  In such cases, one might want the resulting combined IM to also be consonant, but this can't be guaranteed when, e.g., the combination is carried out using Dempster's rule as described above.  This requires a specific consonance-preserving combination strategy.  Let $\star$ denote a suitable t-norm and define the combined IM directly via its plausibility contour
\begin{equation}
\label{eq:im.tnorm}
\pi_{y,\credal}(\vartheta) = \frac{\pi_y(\vartheta) \star q(\vartheta)}{\sup_{t \in \TT} \{ \pi_y(t) \star q(t)\}}, \quad \vartheta \in \TT. 
\end{equation}
This construction can also be motivated from the perspective of fuzzy set intersections, but I'll not dig into this interpretation here; see, e.g., \citet{dubois.prade.1988}. It's clear that this function is bounded between 0 and 1, and takes the value 1 at where the supremum in the denominator is attained.  This ensures $\pi_{y,\credal}$ is a plausibility contour function and, therefore, defines a consonant IM.  That is, 
\begin{equation}
\label{eq:consonant}
\uPi_{y,\credal}(A) = \sup_{\vartheta \in A} \pi_{y,\credal}(\vartheta) = \frac{\sup_{\vartheta \in A} \{ \pi_y(\vartheta) \star q(\vartheta)\}}{\sup_{\vartheta \in \TT} \{ \pi_y(\vartheta) \star q(\vartheta)\}}, \quad A \subseteq \TT. 
\end{equation}
Common choices of the t-norm $\star$ are product and minimum \citep[e.g.,][]{zadeh1965, zadeh1978}, but I have preference for the product, as I explain in Section~\ref{SS:running2} below.   

Unfortunately, like with the construction above based on Dempster's rule, I've not yet been able to establish theoretically that this consonance-preserving construction achieves validity in the broad sense.  The technical challenge is the same as that for the Dempster's rule-based construction, i.e., analysis of the normalizing constant in the denominator.  Some further remarks on the technical challenges are given in Section~\ref{SS:remarks}, and I show empirically in Sections~\ref{SS:running2} and \ref{S:sparsity} that this construction, with product t-norm, performs quite well in terms of both validity and efficiency.  I can, however, prove an analogous result to that in Proposition~\ref{prop:dempster}, i.e., that the consonance-preserving IM construction is $\A_\credal'$-valid for the collection of assertions $\A_\credal'$ defined in \eqref{eq:dempster.restrict}. 

\begin{prop}
\label{prop:consonant}
Let $\credal$ be a partial prior encoded in a plausibility contour $q$ as described above.  Then the IM in \eqref{eq:consonant} that combines prior and data in a way that preserves consonance is $\A_\credal'$-valid in the sense of Definition~\ref{def:valid} for $\A_\credal'$ defined by \eqref{eq:dempster.restrict}. 
\end{prop}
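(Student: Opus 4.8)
The plan is to follow exactly the route used for Proposition~\ref{prop:dempster}: establish the assertion-wise dominance property \eqref{eq:dumb} for every $A \in \A_\credal'$ and then invoke Proposition~\ref{prop:dumb} to conclude $\A_\credal'$-validity. The only thing requiring genuine work is to show that the consonance-preserving upper probability \eqref{eq:consonant} dominates the product $\uPi_y(A)\,\uprior(A)$ on $\A_\credal'$; everything else is inherited from the already-proven sufficiency result.

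First I would exploit the nested focal structure of $\T$. Fix $A \in \A_\credal'$ with the associated index $k$ from \eqref{eq:dempster.restrict}, so that $A \subseteq T_k \setminus T_{k-1}$. For any $\vartheta \in A$ one has $\vartheta \in T_j$ if and only if $j \geq k$, whence the prior plausibility contour is \emph{constant} on $A$:
\[ q(\vartheta) = \sum_{j \geq k} m(T_j) = \uprior(A), \quad \vartheta \in A. \]
This constancy of $q$ on $A$ is precisely what the restriction to $\A_\credal'$ buys us: a generic assertion straddles several ``rings'' $T_j \setminus T_{j-1}$, on which $q$ takes different values, and then no such simplification is available.

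Next I would compute the numerator of \eqref{eq:consonant}. Using the constancy above together with the monotonicity (and, for a general $\star$, continuity) of the t-norm in its first argument, the supremum passes through $\star$,
\[ \sup_{\vartheta \in A} \{\pi_y(\vartheta) \star q(\vartheta)\} = \Bigl(\sup_{\vartheta \in A} \pi_y(\vartheta)\Bigr) \star \uprior(A) = \uPi_y(A) \star \uprior(A); \]
for the product and minimum choices this identity is immediate and needs no continuity. The denominator $\sup_{t \in \TT}\{\pi_y(t) \star q(t)\}$ is bounded above by $1$, since $a \star b \leq a \wedge b \leq 1$. Combining these gives
\[ \uPi_{y,\credal}(A) \geq \uPi_y(A) \star \uprior(A) \geq \uPi_y(A)\,\uprior(A), \]
where the last inequality uses that the product is the smallest of the common t-norms, i.e.\ $a \star b \geq ab$ for $\star$ equal to product or minimum. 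This is exactly the dominance property \eqref{eq:dumb}; moreover $\uprior(A) = \sum_{j \geq k} m(T_j) \geq m(T_k) > 0$, so the $\uprior(A) > 0$ caveat of Proposition~\ref{prop:dumb} is met. Applying that proposition then delivers $\A_\credal'$-validity.

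The main obstacle—and the reason the statement is confined to $\A_\credal'$—is the crude bound on the normalizing denominator. Replacing $\sup_{t}\{\pi_y(t)\star q(t)\}$ by $1$ is lossy, and it is harmless only because on $\A_\credal'$ the numerator collapses to the clean product-like form $\uPi_y(A)\star\uprior(A)$. For assertions outside $\A_\credal'$, where $q$ is non-constant on $A$, the numerator no longer factors and the denominator must be controlled on its own terms; this is the same normalizing-constant difficulty flagged in Section~\ref{SS:remarks} and is what currently blocks a proof of full validity.
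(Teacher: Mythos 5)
Your proof is correct, and it takes a genuinely different route from the paper's own proof of Proposition~\ref{prop:consonant}. The paper argues directly at the contour level: it fixes the minimum t-norm (asserting this is without loss of generality because minimum is the largest t-norm), uses the same two observations you do---the normalizing denominator in \eqref{eq:im.tnorm} is at most one, so $\pi_{y,\credal} \geq \pi_y \wedge q$, and $q \equiv \uprior(A)$ on each $A \in \A_\credal'$---and then splits into two cases: if $\uprior(A) \leq \alpha$, the joint probability is bounded by $\prior(A) \leq \uprior(A) \leq \alpha$; if $\uprior(A) > \alpha$, the event $\{\pi_Y(\Theta) \wedge q(\Theta) \leq \alpha,\, \Theta \in A\}$ reduces to $\{\pi_Y(\Theta) \leq \alpha,\, \Theta \in A\}$, which vacuous-prior validity bounds by $\alpha$. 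You instead establish the assertion-wise dominance \eqref{eq:dumb}---via the factorization $\sup_{\vartheta \in A}\{\pi_y(\vartheta) \star q(\vartheta)\} = \uPi_y(A) \star \uprior(A) \geq \uPi_y(A)\,\uprior(A)$---and then invoke Proposition~\ref{prop:dumb}, exactly mirroring the paper's proof of Proposition~\ref{prop:dempster}. Your arrangement buys two things: it unifies the two alternative constructions of Section~\ref{S:other} under a single sufficiency lemma, and it covers the product t-norm (the paper's stated preference) directly. The latter matters, because the paper's ``without loss of generality'' step is its weakest point: since product $\leq$ minimum, the event $\{\pi_Y(\Theta) \star q(\Theta) \leq \alpha\}$ is \emph{larger} for the product than for the minimum, so a bound proved for the minimum-combined IM does not transfer to smaller t-norms by event inclusion; one must either redo the case analysis with the threshold $\alpha/\uprior(A)$ or, as you do, absorb the factor $\uprior(A)^{-1}$ through $\prior(A)/\uprior(A) \leq 1$ inside the dominance argument. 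What the paper's arrangement buys in exchange is that it never needs the supremum to pass through the t-norm, so it avoids your continuity caveat for general $\star$. Both proofs rest on the same two key facts (denominator at most one, constancy of $q$ on $A \in \A_\credal'$); the difference is the packaging, and yours is arguably the more robust.
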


\begin{proof}
Without loss of generality, I'll work with the minimum t-norm, i.e., $a \star b = a \wedge b$, in \eqref{eq:im.tnorm}.  Since this is the largest of the t-norms, the argument below implies the same conclusion for any other t-norm.  

The first key observation, which was used in the proof of Proposition~\ref{prop:dempster}, is that the normalizing constant in \eqref{eq:im.tnorm} is no more than 1; see Section~\ref{SS:remarks}.  This implies that 
\[ \pi_{y,\credal}(\vartheta) \geq \pi_y(\vartheta) \wedge q(\vartheta). \]
The second key observation is that the prior plausibility contour $q$ is constant on any $A \in \A_\credal'$.  In fact, it's easy to see that $q(\vartheta) \equiv \uprior(A)$ for $A \in \A_\credal'$.  Then I can complete the proof by considering two separate cases based on the magnitude of that constant.  First, if $\uprior(A) \leq \alpha$, then it follows automatically that  
\begin{align*}
\prob_{Y,\Theta}\{\uPi_{Y,\credal}(A) \leq \alpha, \Theta \in A\} & \leq \prob_{Y,\Theta}\{\pi_Y(\Theta) \wedge q(\Theta) \leq \alpha, \Theta \in A\} \\
& \leq \uprior(A) \\
& \leq \alpha. 
\end{align*}
Second, if $\uprior(A) > \alpha$, then 
\[ \pi_y(\vartheta) \wedge q(\vartheta) \leq \alpha, \; \vartheta \in A \iff \pi_y(\vartheta) \leq \alpha, \quad \vartheta \in A. \]
Therefore, 
\begin{align*}
\prob_{Y,\Theta}\{\uPi_{Y,\credal}(A) \leq \alpha, \Theta \in A\} & \leq \prob_{Y,\Theta}\{\pi_Y(\Theta) \wedge q(\Theta) \leq \alpha, \Theta \in A\} \\
& \leq \prob_{Y,\Theta}\{\pi_Y(\Theta) \leq \alpha, \, \Theta \in A\} \\
& \leq \alpha,
\end{align*}
where the last inequality follows from the validity of the vacuous-prior IM. The above two cases exhaust all the possibilities, so the claim is proved. 
\end{proof}

The same remarks following the proof of Proposition~\ref{prop:dempster} can be applied here as well.  In the next subsection, I'll explain the challenge in extending the above validity result to a broader class of assertions.

\subsection{Remarks on the technical challenges}
\label{SS:remarks}

The strategy used in the proof of Proposition~\ref{prop:dempster} suggests a seemingly obvious way to establish the result for general assertions $A$.  That is, decompose the general $A$ based on its intersections with the focal elements of $\T$.  More specifically, write 
\[ A = \bigcup_k \{ A \cap (T_k \setminus T_{k-1}) \}, \quad \text{general $A \in \A$}. \]
Then each piece, $A_k := A \cap (T_k \setminus T_{k-1})$, satisfies the condition \eqref{eq:dempster.restrict}, i.e., $A_k \in \A_\credal'$, and so the Dempster's rule-based IM, with upper probability $\uPi_{y,\credal}$ in \eqref{eq:dempster}, satisfies 
\[ \uPi_{y, \credal}(A_k) \geq \uPi_y(A_k) \, \uprior(A_k), \quad \text{for each $k$}. \]
Then 
\begin{align*}
\int_A \prob_{Y|\theta}\{ \uPi_{Y,\credal}(A) \leq \alpha\} \, \prior(d\theta) & = \sum_k \int_{A_k} \prob_{Y|\theta}\{ \uPi_{Y,\credal}(A) \leq \alpha\} \, \prior(d\theta) \\
& \leq \sum_k \int_{A_k} \prob_{Y|\theta}\{ \uPi_{Y,\credal}(A_k) \leq \alpha\} \, \prior(d\theta).
\end{align*}
From the proof of Proposition~\ref{prop:dumb}, on which the proof of Proposition~\ref{prop:dempster} was based, it's clear that the integrand in the last expression above can be bounded as 
\[ \prob_{Y|\theta}\{ \uPi_{Y,\credal}(A_k) \leq \alpha\} \leq \alpha \uprior(A_k)^{-1}. \]
Plugging in this bound into the integral and then taking the supremum gives 
\begin{align*}
\uprob_\credal\{ \uPi_{Y,\credal}(A) \leq \alpha, \, \Theta \in A\} & = \sup_{\prior \in \credal}\int_A \prob_{Y|\theta}\{ \uPi_{Y,\credal}(A) \leq \alpha\} \, \prior(d\theta) \\
& \leq \sup_{\prior \in \credal} \sum_k \int_{A_k} \prob_{Y|\theta}\{ \uPi_{Y,\credal}(A_k) \leq \alpha\} \, \prior(d\theta) \\
& \leq \sup_{\prior \in \credal} \sum_k \frac{\alpha \, \prior(A_k)}{\uprior(A_k)} \\
& = \alpha \times \sup_{\prior \in \credal} \sum_k \frac{\prior(A_k)}{\uprior(A_k)}.
\end{align*}
Each individual summand is upper-bounded by $1$, but the supremum would be greater than 1 and so this argument fails to establish validity of the Dempster's rule-based IM at a general assertion $A$.  Of course, the failure of one particular proof strategy does not make the claim false, but it does give me some reasons to be concerned.  At the very least, it suggests a better understanding is needed.  




Next, consider the consonance-preserving IM defined above, with t-norm $a \star b = a \wedge b$; this is the largest of the t-norms and, therefore, would be the easiest with which to prove the desired result.  Since the consonant $\uPi_{y,\credal}$ is defined via a supremum, a proof of validity would surely require showing 
\[ \prob_{Y,\Theta}\{\pi_{Y,\credal}(\Theta) \leq \alpha, \, \Theta \in A\} \leq \alpha, \quad \text{for all $A$ and all priors $\prior \in \credal$ for $\Theta$}. \]
Moreover, since the denominator of $\pi_{Y,\credal}$ is complicated, involves a supremum, it's most convenient to replace it with it's upper bound 1, i.e., 
\[ \pi_{Y,\credal}(\Theta) \leq \alpha \implies \pi_Y(\Theta) \wedge q(\Theta) \leq \alpha. \]
This leads to the upper bound
\begin{align*}
\prob_{Y,\Theta}\{ \pi_{Y,\credal}(\Theta) \leq \alpha, \, \Theta \in A\} & \leq \prob_{Y,\Theta}\{ \pi_Y(\Theta) \wedge q(\Theta) \leq \alpha, \, \Theta \in A\} \\
& = \prob_{Y,\Theta}\{ \pi_Y(\Theta) \leq \alpha, \, \Theta \in A\} + \prob_\Theta\{ q(\Theta) \leq \alpha, \, \Theta \in A\} \\
& \qquad - \prob_{Y,\Theta}\{ \pi_Y(\Theta) \vee q(\Theta) \leq \alpha, \, \Theta \in A\}.
\end{align*}
The first probability on the right-hand side is upper-bounded by $\alpha \uprior(A)$, and this bound is sharp; the second can upper-bounded by $\alpha \wedge \uprior(A)$, but this is probably not sharp.  The third can be trivially lower-bounded by 0, but this doesn't help, since $\alpha \uprior(A) + \alpha \wedge \uprior(A)$ exceeds $\alpha$, at least when $\uprior(A)$ is larger than $\alpha$.  Presently, I don't see how get a suitable, non-trivial lower bound on the third probability on the last line of the above display.  

Incidentally, the numerical results that follow indicate that the consonance-preserving construction might lead to strong validity.  However, I have serious doubts that this can be true in general.  Indeed, to prove strong validity, I'd need to show
\[ \prob_{Y,\Theta}\{\pi_{Y,\credal}(\Theta) \leq \alpha\} \leq \alpha \quad \text{for all priors $\prior \in \credal$ for $\Theta$}. \]
Using the same upper-bound-the-denominator-by-1 strategy described above, I get 
\begin{align*}
\prob_{Y,\Theta}\{ \pi_{Y,\credal}(\Theta) \leq \alpha\} & \leq \prob_{Y,\Theta}\{ \pi_Y(\Theta) \wedge q(\Theta) \leq \alpha\} \\
& = \prob_{Y,\Theta}\{ \pi_Y(\Theta) \leq \alpha\} + \prob_\Theta\{ q(\Theta) \leq \alpha\} - \prob_{Y,\Theta}\{ \pi_Y(\Theta) \vee q(\Theta) \leq \alpha\}.
\end{align*}
Like above, the first term can be (sharply) bounded by $\alpha$ and the second term can be (maybe-not-sharply) bounded by $\alpha$.  But the third probability is no bigger than the minimum of the first two, so this upper bound is actually {\em no smaller than $\alpha$}.  Again, an upper bound for $\prob_{Y,\Theta}\{ \pi_{Y,\credal}(\Theta) \leq \alpha\}$ that's bigger than $\alpha$ doesn't disprove the claim, but it means that the upper-bound-the-denominator-by-1 strategy isn't going to work.  And based on my experience investigating properties of the denominator, it seems unlikely that there's a useful upper bound that's less than 1.  

Although the validity results I've been able to prove for the two IM constructions described above are not fully satisfactory, I do believe better results are possible; see Section~\ref{SS:running2} below.  I'll continue to work on this and I hope to report on these efforts elsewhere, perhaps in a subsequent revision of the present paper.

\subsection{Example, continued}
\label{SS:running2}

Two alternative partial prior-based IM constructions were proposed above, one based on Dempster's rule and another based on a consonance-preserving combination rule.  Both of these IMs, as well as the vacuous-prior IM, have plausibility contour functions.  The two consonant IMs would be completely determined by their plausibility contours, while the Dempster's rule-based IM is not; however, there's no reason the latter's contour function can't also be used to visualize the its output for the purpose of making a direct comparison with the former's.  Recall that both the prior and the vacuous-prior IM are determined by their plausibility contours:
\begin{align*}
q(\vartheta) & = \beta + (1-\beta) \, 1(\vartheta \in [a,b]) \\
\pi_y(\vartheta) & = 2\{ 1 - \Phi\bigl(n^{1/2}|y-\vartheta|\bigr)\}. 
\end{align*}
Then the Dempster's rule-based and consonance-preserving rule-based combined IMs have contour functions given by, respectively,  
\begin{equation}
\label{eq:pid.example}
\pid_{y,\credal}(\vartheta) = \frac{\pi_y(\vartheta) \{\beta + (1-\beta) \, 1(\vartheta \in [a,b])\}}{\beta + (1-\beta) \{1(y \in [a,b]) + \pi_y(a) \, 1(y < a) + \pi_y(b) \, 1(y > b)\}}
\end{equation}
and
\begin{equation}
\label{eq:pic.example}
\pic_{y,\credal}(\vartheta) = \frac{\pi_y(\vartheta) \{\beta + (1-\beta) \, 1(\vartheta \in [a,b])\}}{1(y \in [a,b]) + \{\beta \vee \pi_y(a)\} \, 1(y < a) + \{(1-\beta) \vee \pi_y(b)\} \, 1(y > b)}
\end{equation}
Clearly, the two expressions have the same numerators, the difference is only in the denominator.  This means the shape of the two curves will be the same, only the magnitudes will differ.  The denominator in \eqref{eq:pic.example} is no larger, and generally strictly smaller, than that in \eqref{eq:pid.example}, so a plot of $\pic_{y,\credal}$ will tend to be above that of $\pid_{y,\credal}$.  Of course, when $y \in [a,b]$, both contours are the same and equal to the product $\pi_y \, q$.  Compared to the vacuous-prior IM, both of \eqref{eq:pid.example} and \eqref{eq:pic.example} will be no larger and will tend to be strictly smaller as a result of being combined with partial prior information.  To visualize this, the same numerical example is carried out as in Section~\ref{SS:running}, and the plots in Figure~\ref{fig:ab.example2} can be compared to those in Figure~\ref{fig:ab.example1} above.  In Panels~(a) and (b), there's no difference between the two prior-dependent contours because data closely agrees with the prior, i.e., since $y \in [a,b]$.  When data and prior agree, a gain in efficiency is expected in the prior versus vacuous-prior IMs and the plots confirm this---note that both $\pic_{y,\credal}$ and $\pid_{y,\credal}$ are more narrowly spread than $\pi_y$.  In Panels~(c) and (d), the data and prior are not in perfect agreement, and the differences between the contour functions are more apparent.  In Panel~(c), the data are apparently sufficient close to agreeing with the prior that values $\vartheta$ close and to the right of $a$ are assigned higher plausibility than those near $y$.  In Panel~(d), where there is the most disagreement between data and prior, the two prior-based contours differ most strikingly.  Indeed, in cases like this with prior--data conflict, the denominator in \eqref{eq:pid.example} is considerably larger than that in \eqref{eq:pic.example}, so $\pid_{y,\credal} \ll \pic_{y,\credal}$.  In this case, perhaps as expected, the prior-based IMs can't fully decide between the data and the prior, so high plausibility is assigned to a neighborhood around $y$ and to a neighborhood around the closest boundary point $a$ of the region supported by the prior.  

\begin{figure}
\begin{center}
\subfigure[$y=1.5$]{\scalebox{0.6}{\includegraphics{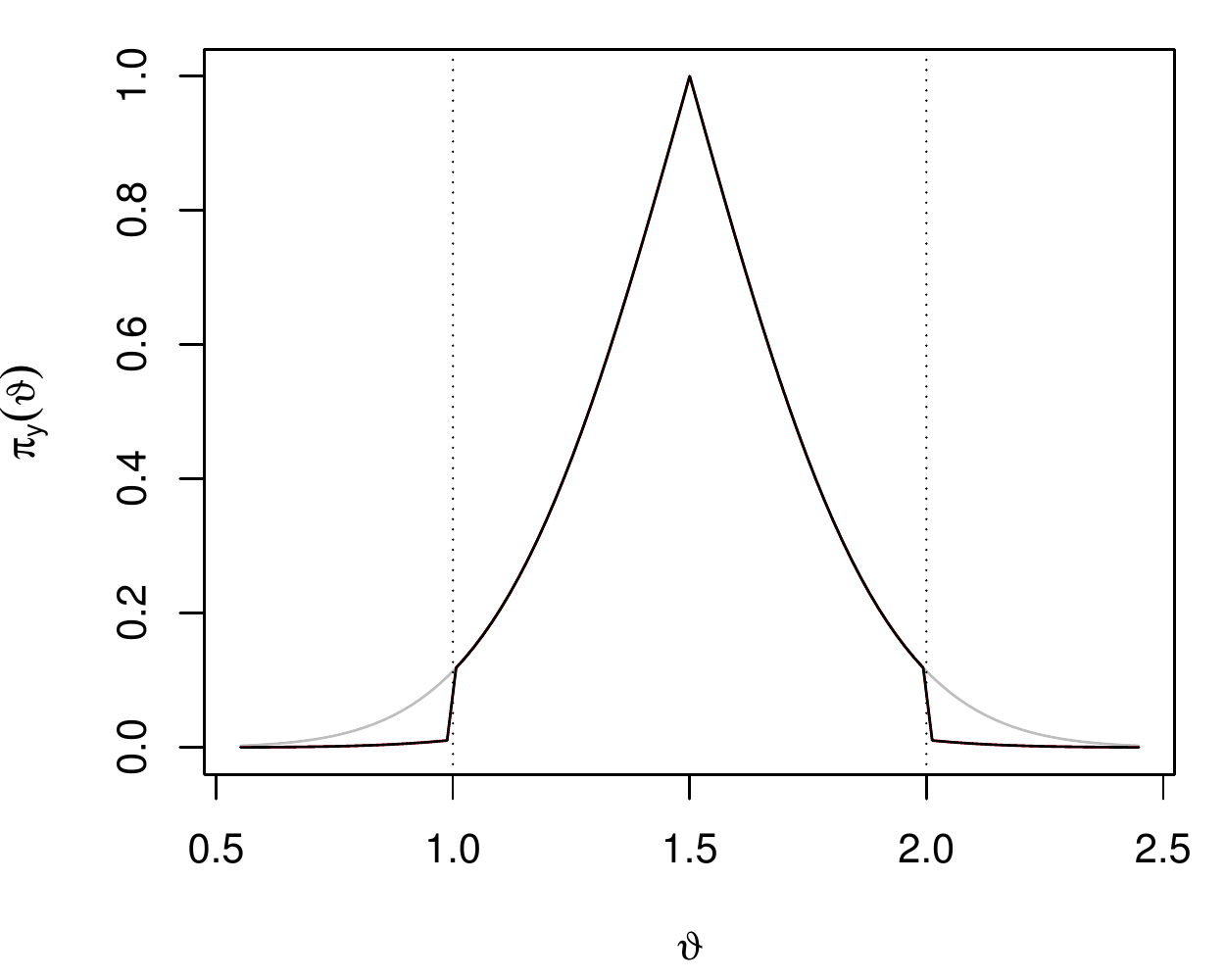}}}
\subfigure[$y=1.1$]{\scalebox{0.6}{\includegraphics{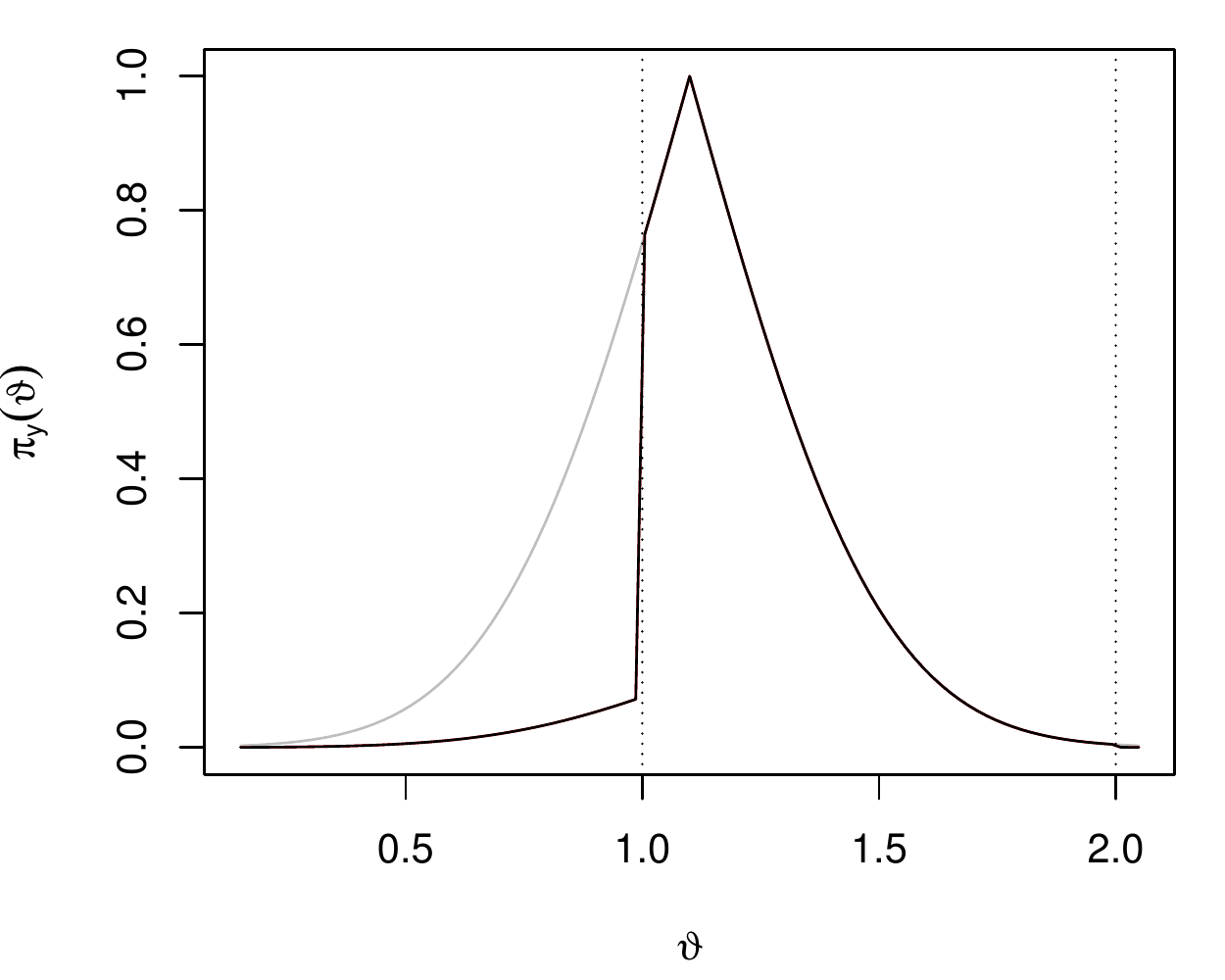}}}
\subfigure[$y=0.9$]{\scalebox{0.6}{\includegraphics{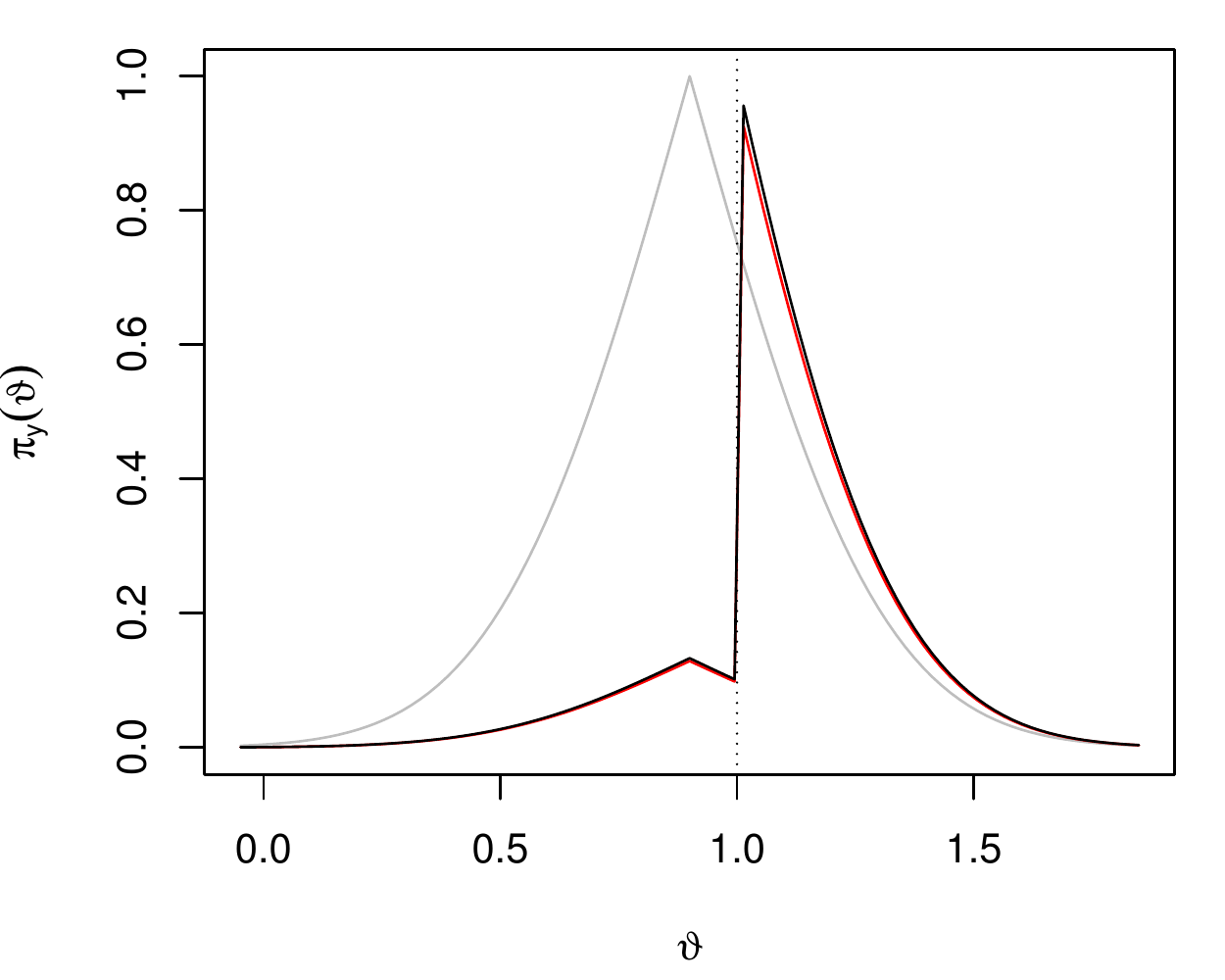}}}
\subfigure[$y=0.5$]{\scalebox{0.6}{\includegraphics{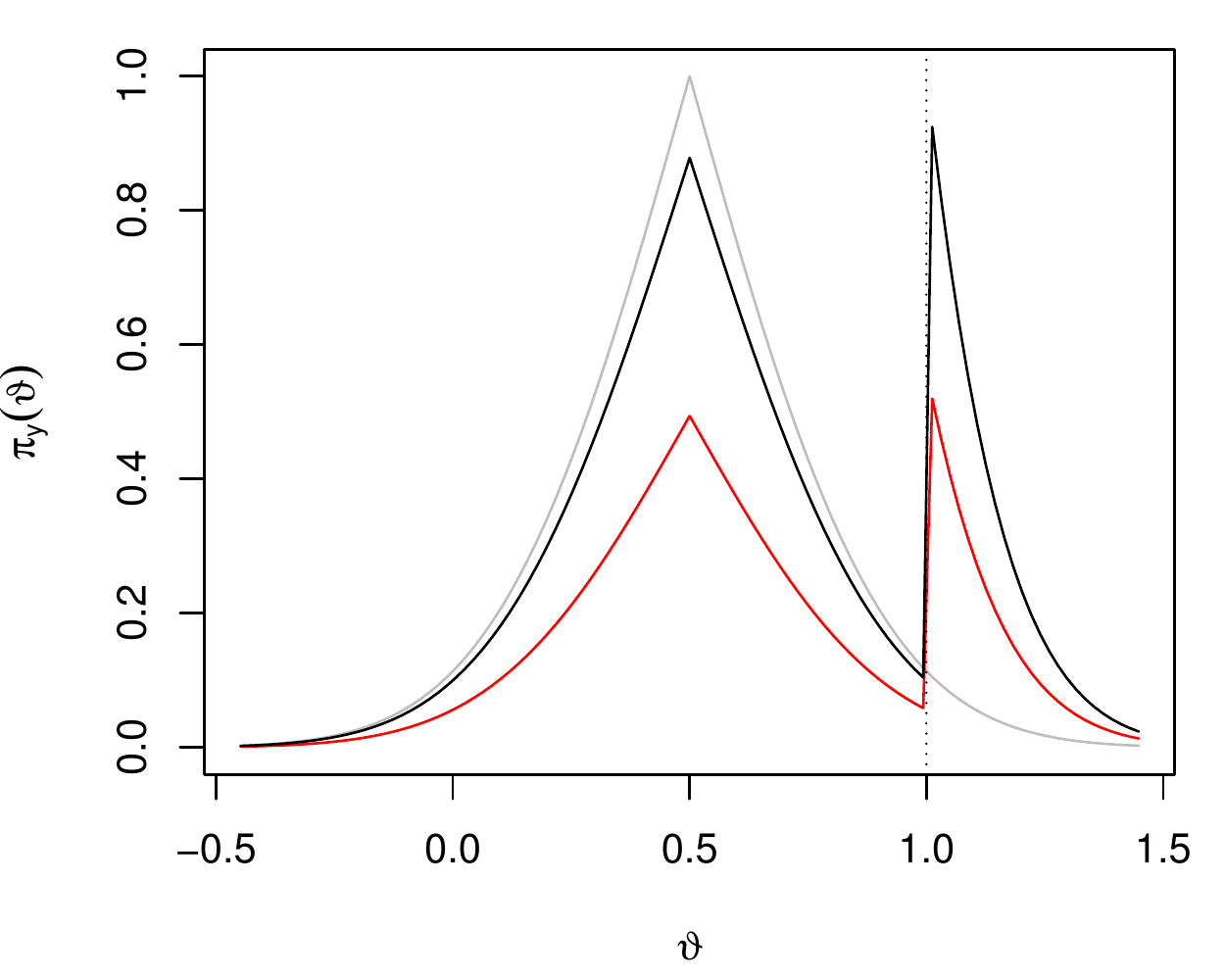}}}
\end{center}
\caption{Plausibility contour plots for the three different IMs discussed in Section~\ref{SS:running2}, for four different values of the data $y$. The gray line is based on the vacuous-prior IM; the black line is based on the consonance-preserving combination rule \eqref{eq:pic.example}; and the red line is based on Dempster's rule in \eqref{eq:pid.example}. In Panels~(c) and (d), that $\pic_{y,\credal} < 1$ is just numerical inaccuracy resulting from the discontinuity at the $a=1$ boundary.}
\label{fig:ab.example2}
\end{figure}

Although validity results for the prior-based IMs are currently lacking---the results in Propositions~\ref{prop:dempster}--\ref{prop:consonant} fall slightly short of the desired validity conclusion---the similarities to the strongly valid vacuous-prior IM in Figure~\ref{fig:ab.example2} suggest that they ought to be ``close to valid'' in some sense.  Next I briefly explore this with a simulation study.  I'll use the same setup as above, i.e., $n=10$, $[a,b] = [1,2]$, and $\beta=0.1$.  Also, $\Theta$ will be drawn from a distribution $\prior^\star \in \credal$ compatible with the information encoded in the prior, namely, 
\[ \prior^\star = \beta \, \delta_{\{0\}} + (1-\beta) \, \unif(a,b). \]
There's nothing special about these choices; I carried out similar simulations under different settings and the results were comparable to those presented here. Figure~\ref{fig:valid.1} shows (Monte Carlo estimates of) the distribution function 
\begin{equation}
\label{eq:cdf.1}
\alpha \mapsto \prob_{Y,\Theta}\{ \pi_{Y,\credal}(\Theta) \leq \alpha\}, \quad \alpha \in [0,1], 
\end{equation}
of the random variable $\pi_{Y,\credal}(\Theta)$ as a function of $(Y,\Theta)$ sampled from the joint distribution described above, for four different IM constructions: the vacuous-prior IM, the one based on information aggregation as in Section~\ref{SS:fusion}, and the ones based on Dempster's and the consonance-preserving rules above.  As the plot shows, the distribution function \eqref{eq:cdf.1} is one/below the diagonal line for all four IMs, which is consistent with the strong validity property.  This property was proved for both the vacuous-prior and information aggregation-based IMs, but not for the Dempster's and consonance-preserving rule-based IMs.  There are two other features of this plot that deserve to be mentioned.  First, the information aggregation-based IM has distribution function generally far less than that of the other IMs, which is consistent with the inefficiency pointed out in Section~\ref{SS:running}.  Second, the distribution functions for the Dempster's and consonance-preserving rule-based IMs are indistinguishable, which may not be unexpected based on the similarities observed in Figure~\ref{fig:ab.example2}.  This is interesting because, like the information aggregation-based IM, that based on Dempster's rule is not guaranteed consonant; apparently, based on the model for $(Y,\Theta)$, the data and prior ``tend to agree,'' so conflict is rare and these two non-consonant IMs are approximately consonant.  Finally, while there seems to be little difference between the vacuous-prior and consonance-preserved IMs in terms of (strong) validity, there is a difference in efficiency as suggested by Figure~\ref{fig:ab.example2}.  Indeed, the vacuous-prior IM's 95\% plausibility intervals have coverage probability and average length $0.95$ and $1.24$, respectively, whereas the IM that incorporates the partial prior information using the consonance-preserving rule has 95\% plausibility intervals with coverage $0.96$ and length $0.93$.  That (roughly) the same coverage is achieved with narrower intervals shows the gain in efficiency by incorporating the prior information.  (Reading off these same intervals from the Dempster's rule-based IM construction wouldn't be appropriate because it's not technically consonant.)

\begin{figure}[t]
\begin{center}
\scalebox{0.7}{\includegraphics{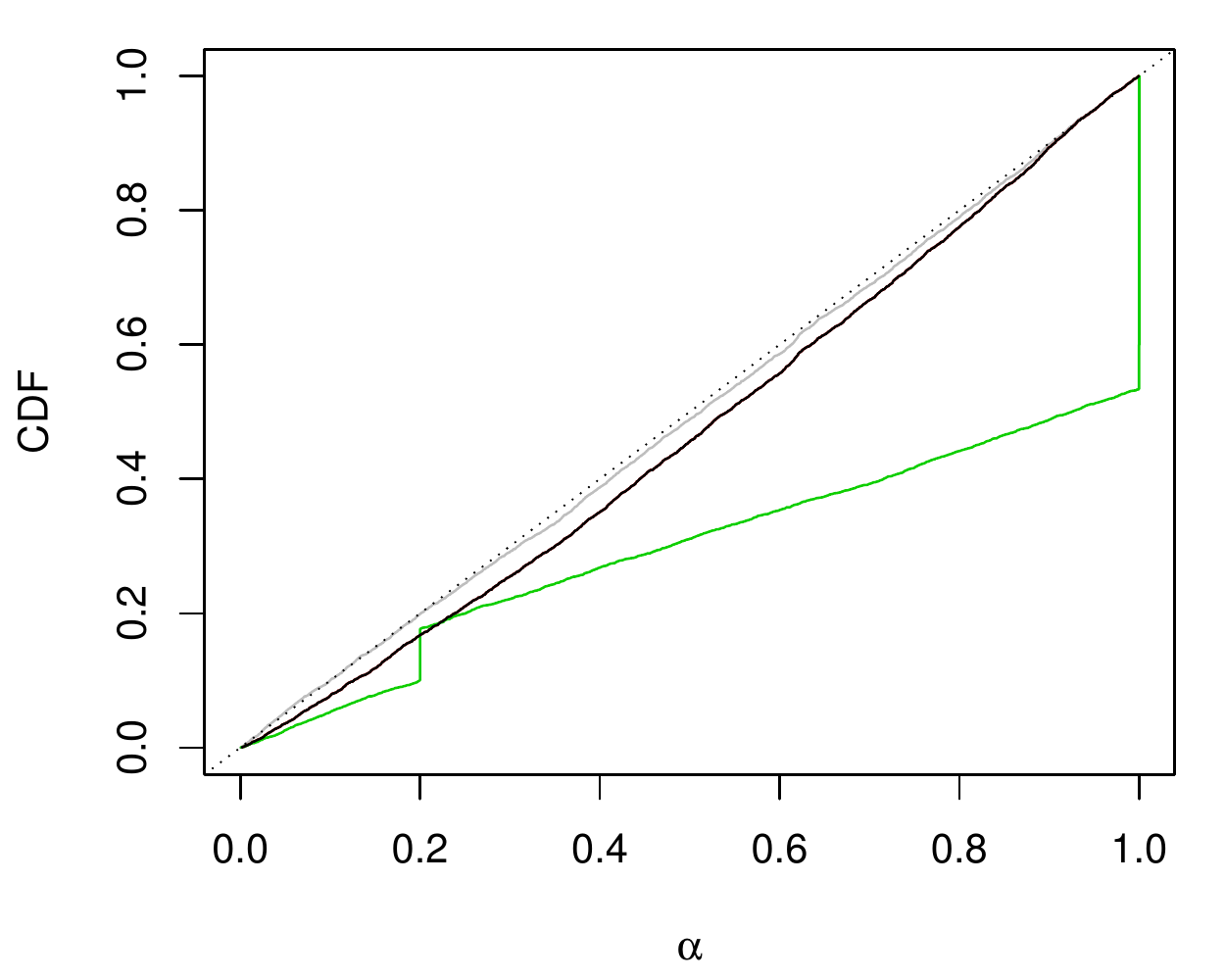}} 
\end{center}
\caption{Monte Carlo estimate of the distribution function \eqref{eq:cdf.1} for four different IMs: vacuous-prior IM (gray); Dempster's rule (red); consonance-preserving rule (black); information aggregation (green). On or below the diagonal line suggests strong validity.}
\label{fig:valid.1}
\end{figure}

\section{Validification}
\label{S:validify}

The strategies discussed in Section~\ref{S:other} seemed promising, so it's disappointing that the desired validity property could not be established for this.  But that's not the end of the story.  It turns out that one can take an IM that is potentially invalid and {\em validify} it, i.e., transform it into a new IM that's provably valid.  This is related to the probability--possibility transforms in \citet{dubois.etal.2004} and developed further in \citet[][Lemma~17]{hose.hanss.2021} and in Chapter~2.3.2.2 of \citet{hose2022thesis}.  The basic idea is that one can, from a given credal set and a partial order on $\TT$, construct a possibility measure whose associated credal set contains the given credal set.  This formulation is general but, in the applications I have in mind, I'm interested in a credal set of distributions for $(Y,\Theta)$, which is determined by the credal set $\credal$ of priors for $\Theta$.  Two key properties of validification are as follows. 
\begin{itemize}
\item I can start with a partial order defined by an IM that may or may not be valid, e.g., that based on the contour of the consonance-preserving combination rule described in Section~\ref{SS:consonant}, and the resulting possibility will be strongly valid in the sense of Definition~\ref{def:strong}.  Hence the name {\em validification}.  
\item If the partial order I start with is the plausibility contour of a strongly valid IM, then the validification procedure returns a new IM that's strongly valid and no less efficient than the one I start with.   
\end{itemize}

The specific construction is as follows.  Let $h_y(\vartheta)$ denote an appropriately measurable real-valued function on $\YY \times \TT$, and define the plausibility contour 
\begin{equation}
\label{eq:transform}
\pi_{y,\credal}(\vartheta) = \uprob_\credal\{ h_Y(\Theta) \leq h_y(\vartheta)\}, \quad \vartheta \in \TT. 
\end{equation}
In principle, no conditions on $h$ are required beyond measurability.  However, the left-hand side of \eqref{eq:transform} may not have the properties of a plausibility contour if $h$ doesn't as well.  Of course, the scale of $h$ isn't important; what's needed is that $\vartheta \mapsto h_y(\vartheta)$ attains its maximum value for each $y$, and that the maximum value is constant in $y$.  For this reason, I'll focus below primarily on cases where $h$ itself is a plausibility contour.  

\begin{prop}
\label{prop:validify}
Fix a mapping $(y,\vartheta) \mapsto h_y(\vartheta)$ as above and define the corresponding plausibility contour $\pi_{y,\credal}$ as in \eqref{eq:transform}.
\begin{enumerate}
\item The consonant IM defined by $\pi_{y,\credal}$ is strongly valid relative to $\credal$ in the sense of Definition~\ref{def:strong}. 
\item If $h_y$ is the plausibility contour of a consonant, strongly valid IM, then $\pi_{y,\credal}$ satisfies $\pi_{y,\credal} \leq h_y$; that is, the consonant IM corresponding to $\pi_{y,\credal}$ is not only strongly valid, but no less efficient than that based on $h_y$. 
\end{enumerate}
\end{prop}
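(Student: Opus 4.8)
The plan is to treat the two parts separately, handling the harder Part~1 by a probability-integral-transform argument and obtaining Part~2 as a one-line consequence of the lemma in Section~\ref{SS:def}.

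For Part~1, I would first invoke that lemma to reduce strong validity to the scalar condition $\uprob_\credal\{\pi_{Y,\credal}(\Theta) \leq \alpha\} \leq \alpha$ for all $\alpha \in [0,1]$. The key structural observation is that, writing $F(t) = \uprob_\credal\{h_Y(\Theta) \leq t\}$ for the upper distribution function of the scalar random variable $Z := h_Y(\Theta)$, the transform \eqref{eq:transform} is exactly $\pi_{y,\credal}(\vartheta) = F(h_y(\vartheta))$, so that $\pi_{Y,\credal}(\Theta) = F(Z)$. The goal thus becomes showing $\uprob_\credal\{F(Z) \leq \alpha\} \leq \alpha$.

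To establish this, I would expand the upper probability as a supremum, $\uprob_\credal\{F(Z) \leq \alpha\} = \sup_{\prior \in \credal} \prob_{Y,\Theta}\{F(Z) \leq \alpha\}$, where under each $\prior$ the pair $(Y,\Theta)$ follows the corresponding joint distribution. For fixed $\prior$, let $F_\prior$ denote the ordinary, right-continuous distribution function of $Z$ under $\prob_{Y,\Theta}$. Since $F = \sup_{\prior} F_\prior$ by definition, we have $F \geq F_\prior$ pointwise, hence $F(Z) \geq F_\prior(Z)$ and $\{F(Z) \leq \alpha\} \subseteq \{F_\prior(Z) \leq \alpha\}$. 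The remaining ingredient is the standard super-uniformity of the probability integral transform: for the genuine distribution function $F_\prior$ of $Z$, one has $\prob_{Y,\Theta}\{F_\prior(Z) \leq \alpha\} \leq \alpha$. This is proved by splitting on whether the sublevel set $\{z : F_\prior(z) \leq \alpha\}$ is right-closed or right-open and using monotonicity and right-continuity of $F_\prior$ to identify the probability of that set with a value of $F_\prior$ that is at most $\alpha$. Combining the inclusion with super-uniformity bounds each term in the supremum by $\alpha$, uniformly in $\prior$, which gives Part~1.

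Part~2 is then immediate. If $h_y$ is the plausibility contour of a consonant, strongly valid IM, the lemma in Section~\ref{SS:def} applied to that IM gives $\uprob_\credal\{h_Y(\Theta) \leq \alpha\} \leq \alpha$ for every $\alpha \in [0,1]$. Specializing to $\alpha = h_y(\vartheta)$, which lies in $[0,1]$ because $h_y$ is a contour, yields $\pi_{y,\credal}(\vartheta) = \uprob_\credal\{h_Y(\Theta) \leq h_y(\vartheta)\} \leq h_y(\vartheta)$ for all $(y,\vartheta)$, i.e., the pointwise domination $\pi_{y,\credal} \leq h_y$; by the specificity interpretation of efficiency in Section~\ref{S:efficiency}, the validified IM is then no less efficient than the one based on $h_y$. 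I expect the only real obstacle to be the super-uniformity step in Part~1, and specifically the need to accommodate atoms in the law of $Z$ so that $F_\prior$ may be discontinuous; the closed/open case split handles this, and it is worth confirming that passing to the supremum over $\credal$ preserves the bound, which it does because the inclusion $\{F(Z) \leq \alpha\} \subseteq \{F_\prior(Z) \leq \alpha\}$ holds simultaneously for every $\prior$. The reduction in Part~1 and the whole of Part~2 are otherwise routine.
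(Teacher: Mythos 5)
Your proof is correct and follows essentially the same route as the paper's: Part~1 is exactly the paper's argument (your $F_\prior$ is the paper's ${\cal H}_\prior$, with the same inclusion $\{F(Z) \leq \alpha\} \subseteq \{F_\prior(Z) \leq \alpha\}$ from $F = \sup_\prior F_\prior$, followed by super-uniformity of the probability integral transform and a supremum over $\credal$). Your Part~2 is the same one-line domination as the paper's, merely phrased by invoking the equivalent condition \eqref{eq:strong.alt} at the fixed level $\alpha = h_y(\vartheta)$ rather than expanding $\uprob_\credal$ as an integral and bounding the integrand pointwise in $\theta$.
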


\begin{proof}
To start, let ${\cal H}_\prior(\cdot)$ denote the distribution function of the random variable $h_Y(\Theta)$, as a function of $(Y,\Theta)$ when $\Theta \sim \prior$.  Then 
\[ \pi_{y,\credal}(\vartheta) = \sup_{\prior \in \credal} {\cal H}_\prior\bigl(h_y(\vartheta)\bigr) \geq {\cal H}_\prior\bigl(h_y(\vartheta)\bigr), \quad \text{for all $\prior$}, \]
and ${\cal H}_\prior\bigl( h_Y(\Theta) \bigr)$ is stochastically no smaller than $\unif(0,1)$ under $\prior$.  Therefore, 
\begin{align*}
\uprob_\credal\{ \pi_{Y,\credal}(\Theta) \leq \alpha\} & = \sup_{\prior \in \credal} \prob_{Y,\Theta | \prior}\{ \pi_{Y,\credal}(\Theta) \leq \alpha\} \\
& \leq \sup_{\prior \in \credal} \prob_{Y,\Theta | \prior}\{ {\cal H}_\prior\bigl( h_Y(\Theta) \bigr) \leq \alpha\} \\
& \leq \alpha,
\end{align*}
which proves the strong validity claim in Part~1. Next, it's easy to check that 
\begin{align*}
\pi_{y,\credal}(\vartheta) & = \uprob_\credal\{ h_Y(\Theta) \leq H_y(\vartheta)\} \\
& = \sup_{\prior \in \credal} \int \prob_{Y|\theta}\{h_Y(\theta) \leq h_y(\vartheta)\} \, \prior(d\theta) \\
& \leq \sup_{\prior \in \credal} \int h_y(\vartheta) \, \prior(d\theta) \\
& = h_y(\vartheta),
\end{align*}
where the inequality is by the assumed strong validity. This proves Part~2.
\end{proof}

There is an interesting comparison between Proposition~\ref{prop:validify}.2 and the famous Rao--Blackwell theorem in classical statistics.  The latter says if you start with an unbiased estimator, then you can Rao--Blackwellize it (conditional expectation given a sufficient statistic) to obtain a new unbiased estimator that's no less efficient in the sense that its variance is no larger than that of the given unbiased estimator.  In the present case, Proposition~\ref{prop:validify} says that if my partial ordering is determined by an $h_y$ that's already a consonant IM's plausibility contour, then \eqref{eq:transform} defines the plausibility contour for a new consonant IM that's valid no less efficient than the IM I started with.  

There are a number of ways that one could apply the above result in the present context.  First, I'd hoped that this would provide a general strategy for combining the vacuous-prior IM with the imprecise prior information in $\credal$.  That is, take $h_y = \pi_y$ to be the vacuous-prior IM and then construct a prior-dependent $\pi_{y,\credal}$ according to \eqref{eq:transform}.  By Proposition~\ref{prop:validify}.2, this is no less efficient than the vacuous-prior IM, so this idea initially seemed quite promising.  However, there's an interesting invariance property that prevents improvements of IMs that already have certain validity properties.  To illustrate this, suppose that the vacuous-prior IM is {\em exactly valid} in the sense that 
\[ \prob_{Y|\theta}\{ \pi_Y(\theta) \leq \alpha\} = \alpha \quad \text{for all $\alpha$ and all $\theta$}. \]
This is often the case; the vacuous-prior IM in the normal examples here are exactly valid, as it is for most of the examples in \citet{imbook}. If I apply the transformation \eqref{eq:transform} with $h_y = \pi_y$, then I get $\pi_{y,\credal} = \pi_y$. Hence the transform \eqref{eq:transform} is invariant on the collection of consonant IMs that are exactly valid and, hence, can't be used as a general rule for combing a vacuous-prior IM with a credal set to gain efficiency.  

Second, and even more exciting, is a potential resurrection of the IM based on the consonance-preserving combination rule described in Section~\ref{SS:consonant}.  I was unable to prove a general result about validity of this IM, but suppose now I take $h_y$ in the above construction to be the plausibility contour \eqref{eq:im.tnorm}, i.e., 
\begin{equation}
\label{eq:special.h}
h_y(\vartheta) = \frac{\pi_y(\vartheta) \star q(\vartheta)}{\sup_{t \in \TT} \{ \pi_y(t) \star q(t)\}}, \quad \vartheta \in \TT, 
\end{equation}
where $\pi_y$ is the vacuous-prior IM's contour, $q$ is the prior contour, and $\star$ is a t-norm.  The corresponding validified IM would be consonant, would closely resemble that constructed in Section~\ref{SS:consonant}, and---most importantly---would be valid, by Proposition~\ref{prop:validify}.1.  

\begin{cor}
\label{cor:validify.consonant}
The consonant IM defined by \eqref{eq:transform} with the $h$-function given by \eqref{eq:special.h} is a strongly valid version of the consonant IM defined in Section~\ref{SS:consonant}.  And if it turns out that the latter itself is strongly valid, then the former is no less efficient. 
\end{cor}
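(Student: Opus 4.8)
The plan is to recognize that this corollary is an immediate specialization of Proposition~\ref{prop:validify} to the particular choice of $h$-function given in \eqref{eq:special.h}. The first thing I would do is observe that \eqref{eq:special.h} is literally the plausibility contour \eqref{eq:im.tnorm} of the consonance-preserving combination rule from Section~\ref{SS:consonant}. By construction in that section, this contour is bounded in $[0,1]$ and, by the normalization in its denominator, attains the value $1$ for every $y$; hence $\vartheta \mapsto h_y(\vartheta)$ attains its maximum for each $y$, and that maximum equals $1$, which is constant in $y$. Together with measurability of $(y,\vartheta) \mapsto h_y(\vartheta)$, inherited from the measurability of $\pi_y$ and $q$, this verifies precisely the hypotheses imposed on $h$ in the statement of Proposition~\ref{prop:validify}.

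With the hypotheses checked, Part~1 of the corollary follows directly from Proposition~\ref{prop:validify}.1: the consonant IM determined by the contour $\pi_{y,\credal}$ of \eqref{eq:transform}, built from this particular $h$, is strongly valid relative to $\credal$. That this is a \emph{version of} the Section~\ref{SS:consonant} IM is because both are generated by the same $h_y$; the validification \eqref{eq:transform} reprocesses that contour through $\uprob_\credal\{h_Y(\Theta) \le h_y(\vartheta)\}$ rather than using it directly. Since this quantity is nondecreasing in $h_y(\vartheta)$, the two IMs induce the same ordering of $\TT$ for each fixed $y$ and differ only in the numerical contour values they assign.

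For Part~2, I would add the hypothesis that the Section~\ref{SS:consonant} IM---whose contour is exactly $h_y$---is itself strongly valid. This is precisely the condition required by Proposition~\ref{prop:validify}.2, namely that $h_y$ be the plausibility contour of a consonant, strongly valid IM. Applying that part of the proposition gives $\pi_{y,\credal} \le h_y$ pointwise, and since pointwise-smaller plausibility contours correspond to (weakly) smaller upper probabilities, hence greater efficiency in the sense discussed in Section~\ref{S:efficiency}, the validified IM is no less efficient than the one it was built from.

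The main (and essentially only) delicate point is confirming that the normalized contour \eqref{eq:im.tnorm} genuinely meets the regularity conditions on $h$: in particular, that the supremum $\sup_{t \in \TT}\{\pi_y(t) \star q(t)\}$ in its denominator is attained, so that $h_y$ actually reaches its maximum value $1$ for each $y$. This holds under the mild semicontinuity assumptions already in force for the contours in the running example, and more generally wherever the combined function in Section~\ref{SS:consonant} is a bona fide plausibility contour. Beyond verifying this regularity, the argument is a direct appeal to Proposition~\ref{prop:validify}, so no new probabilistic estimates are required.
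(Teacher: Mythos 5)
Your proposal is correct and matches the paper's own treatment: the corollary is stated there as an immediate consequence of Proposition~\ref{prop:validify}, with Part~1 giving strong validity for the choice $h_y$ in \eqref{eq:special.h} and Part~2 giving the efficiency comparison $\pi_{y,\credal} \leq h_y$ under the hypothesis that the Section~\ref{SS:consonant} IM is itself strongly valid. Your added check that \eqref{eq:special.h} attains the value $1$ for each $y$ (so it qualifies as an admissible $h$) is a worthwhile detail the paper leaves implicit, but it does not change the argument.
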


This validified version of the consonant IM developed in Section~\ref{SS:consonant}, as described above, seems quite promising.  The only downside I can see is computation.  That is, how can I actually compute the transformation \eqref{eq:transform} in cases where the prior information encoded in $\credal$ is non-trivial?  General solutions to this computational problem are beyond the scope of the present paper.  However, at least when $\credal$ is the credal set corresponding to a possibility measure with contour $q$ as discussed in Section~\ref{S:other}, things can be simplified.  In this case, the right-hand side of \eqref{eq:transform} is a Choquet integral, so I can use the formula in, e.g., Proposition~7.14 of \citet{lower.previsions.book}, for possibility measures:
\[ \pi_{y,\credal}(\vartheta) = \int_0^1 \Bigl[ \sup_{\theta: q(\theta) > \alpha} \prob_{Y|\theta}\{h_Y(\theta) \leq h_y(\vartheta)\} \Bigr] \, d\alpha. \]
I can carry out a naive Monte Carlo strategy to evaluate this integral for the simple running example of Sections~\ref{SS:running} and \ref{SS:running2}.  Figure~\ref{fig:ab.example3} shows plots of the plausibility contours for the vacuous-prior IM, $\pi_y$, the version $\pic_{y,\credal}$ in \eqref{eq:pic.example} based on the consonance-preserving combination rule in Section~\ref{SS:consonant}, and the validified version of $\pic_{y,\credal}$ with $h$ as in \eqref{eq:special.h}.  The validified version is evaluated based on Monte Carlo so please forgive the bit of wiggliness as a result.  There are several interesting observations in this plot.  First, as expected, $\pic_{y,\credal}$ and its validified version closely match except in a few places.  These differences are not the result of Monte Carlo error---we have reason to doubt that $\pic_{y,\credal}$ is strongly valid, so the validified version should be larger at least in some areas.  Second, despite being slightly larger than $\pic_{y,\credal}$ in certain places, its contours are still narrower than the vacuous-prior IM's, so apparently the desired gain in efficiency has been preserved.  Albeit brief and limited, this numerical illustration confirms my previous claim that this general strategy of validifying the consonant IM developed in Section~\ref{SS:consonant} is quite promising.  

\begin{figure}[t]
\begin{center}
\subfigure[$y=0.9$]{\scalebox{0.6}{\includegraphics{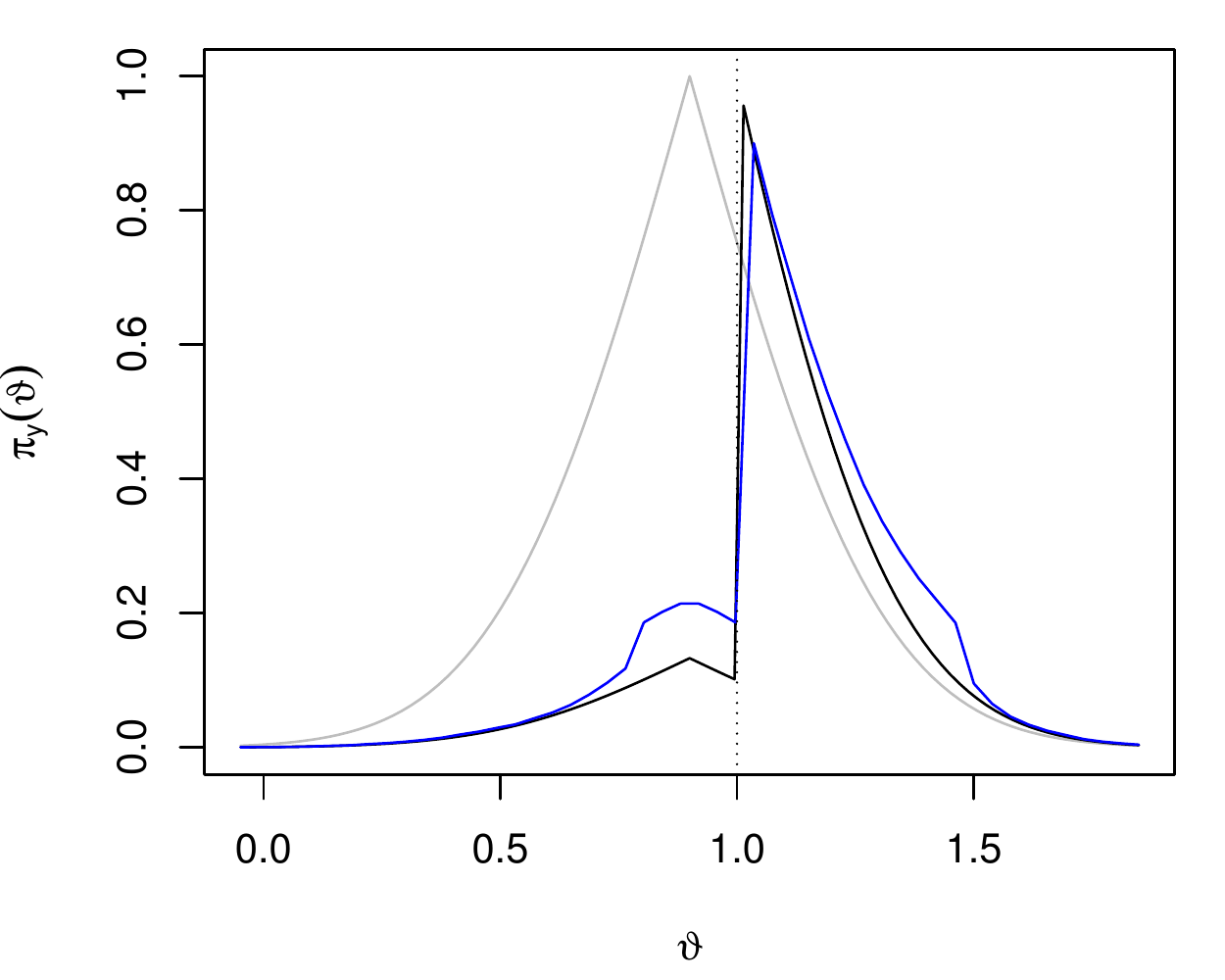}}}
\subfigure[$y=1.1$]{\scalebox{0.6}{\includegraphics{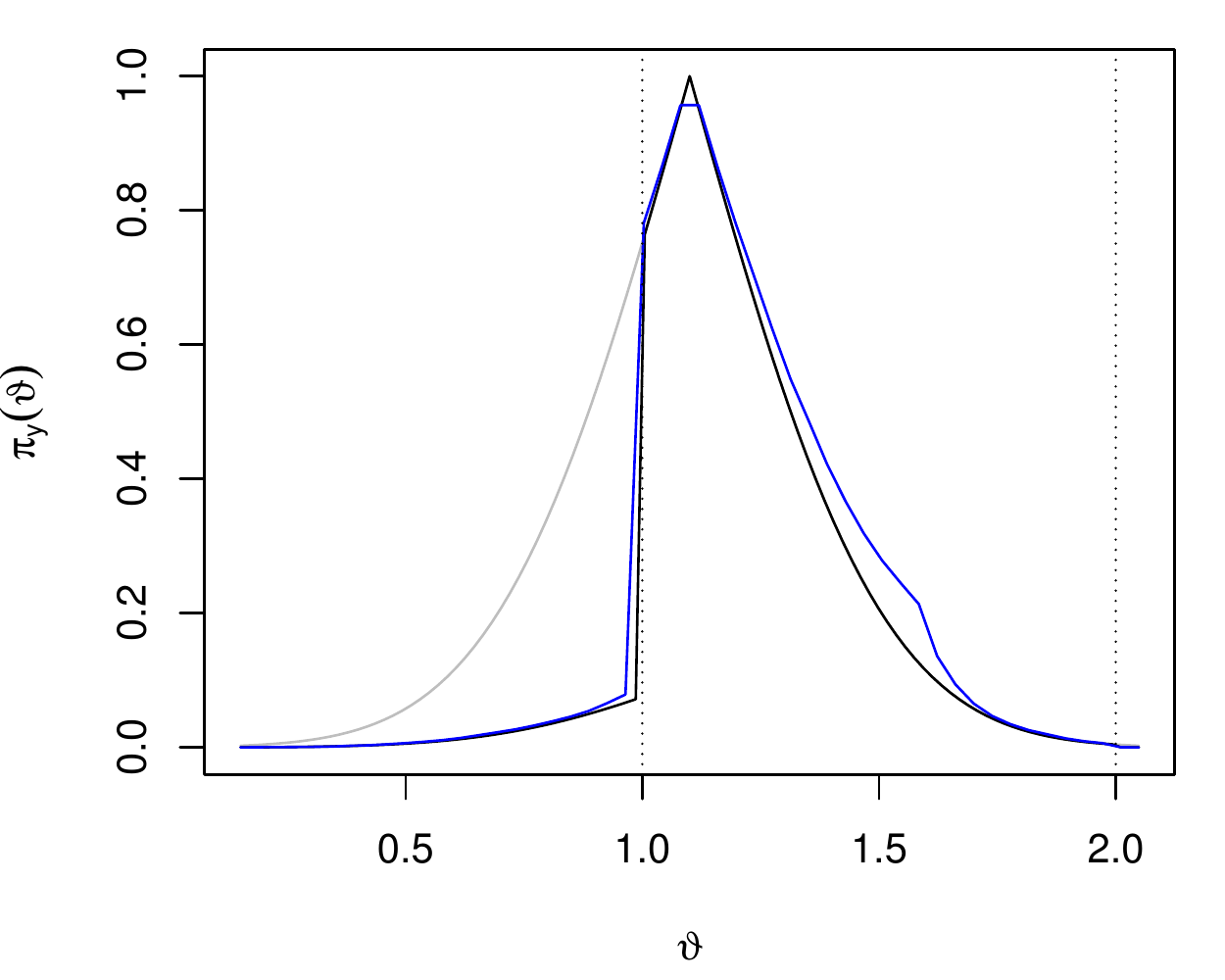}}}
\end{center}
\caption{Plausibility contours for three different IMs in the running example from Section~\ref{SS:running}. The gray line $\pi_y$ based on the vacuous-prior IM; the black line is $\pic_{y,\credal}$ based on the consonance-preserving combination rule \eqref{eq:pic.example}; and the blue line is the validified version of the black line. Both the black and blue curves attain the value 1; that they appear to fall short in the plot is due to numerical imprecision.}
\label{fig:ab.example3}
\end{figure}

There's another interesting perspective on the validification rule that connects to some notions that might be familiar to readers in the imprecise probability community.  In particular, the output of the validification rule is effectively the contour function of an outer consonant approximation to the upper probability $\uprob_\credal$.  I'll save the details about this characterization until Part~II of the series.

\section{Sparse normal mean problem}
\label{S:sparsity}

\subsection{Setup and motivation}

Let $Y \sim \nm_n(\theta, I_n)$ be an $n$-dimensional normal random vector with covariance function known and equal to the identity matrix but with mean vector $\theta \in \TT = \RR^n$ unknown.  As mentioned in Section~\ref{S:intro}, it is common that $\theta$ satisfies certain constraints, especially when $n$ is large.  These constraints impose an inherent low-dimensional structure on $\theta$.  One typical constraint is {\em sparsity}, which here I'll take to mean that most of the coordinates of $\theta$ are 0 and only a few are non-zero.  However, it remains unknown how many are 0, which ones are 0, and what values the non-zero coordinates take.  The goal, of course, is inference on $\theta$, but taking into account the low-dimensional constraints.  

My primary and original motivation for this investigation into the use of partial priors was that statements like ``$\theta$ has an inherent low-dimensional structure'' are imprecise, so quantifying these structural assumptions as imprecise probabilities or partial priors seems quite natural.  But there's another aspect of the developments above that's important in this general class of problems, one that I hadn't anticipated.  In the high-dimensional statistics literature, there are impossibility theorems \citep[e.g.,][]{li1989} that say, roughly, there are no confidence balls for the vector $\theta$ that both (a)~achieve the nominal coverage probability uniformly over $\TT$ and (b)~are not too large.  Here, by ``not too large'' I mean the radius is roughly equal to the minimax rate for estimation of $\theta$ in the class having the posited low-dimensional structure.  For the sparsity structure, this minimax rate is $\{o(n) \log n\}^{1/2}$ as $n \to \infty$ \citep[e.g.,][]{donohojohnstone1994b}.  Details aside, what this means is that, for high-dimensional problems with low-dimensional structure, there's a fundamental incompatibility between the vacuous-prior notion of validity and the efficiency I hope to achieve.  For example, the vacuous-prior IM for the normal means problem is (strongly) valid, so its plausibility regions achieve the nominal coverage probability uniformly over $\TT$.  However, those plausibility balls have radius of the order $n^{1/2}$, which is substantially larger than $\{o(n) \log n\}^{1/2}$, hence inefficient.  Since there are theorems saying it's impossible to close this gap, I need a weaker notion of validity that accommodates the available partial prior information.  It's still an open question whether the relative-to-partial-priors notion of validity presented in Definition~\ref{def:valid} can meet this need, but I'm optimistic.  

A complete solution to this problem is still in the works.  My very modest goal here in this short section is to show, first, how the sparsity structure can be encoded as a partial prior and, second, the potential for efficiency gains (compared to a naive vacuous-prior solution) using some of the ideas developed above.

\subsection{Sparsity prior IM constructions}

On $\TT = \RR^n$, define the ``$\ell_0$-norm'' as $\|\vartheta\|_0 = |\{i: \vartheta_i \neq 0\}|$, the number of entries in $\vartheta$ that are non-zero.  Believing that $\theta$ is sparse is equivalent to believing that $\|\theta\|_0$ is small.  My proposal here is to use a possibility measure to encode the partial prior belief that $\|\theta\|_0$ is small.  For this, define the prior plausibility contour 
\[ q(\vartheta) = 1 - F_{n,\varpi}(\|\vartheta\|_0 - 1), \]
where $F_{n,\omega}$ is the $\bin(n, \varpi)$ distribution function, with $\varpi \in (0,1)$ a hyperparameter that must be specified by the data analyst.  For example, perhaps the data analyst feels comfortable to say that only about 10\% of the entries in $\theta$ are non-zero, in which case he'd set $\varpi=0.1$.  I'm not suggesting this as a ``recommended''\footnote{The advantage of the framework developed in this paper is that users are free to construct their partial priors as they please, perhaps very different from my example here; the IM they get by following, say, the validification strategy described in Section~\ref{S:validify} is sure to be valid with respect to their partial prior specifications.} prior that someone ought to use for their application, it's just an example. Several key properties are encoded in the above-constructed $q$:
\begin{itemize}
\item the origin $\vartheta=0$ is perfectly plausible;
\vspace{-2mm}
\item $q(\vartheta)$ is decreasing in $\|\vartheta\|_0$; 
\vspace{-2mm}
\item and, $q$ is constant on sets with the same number of non-zero coordinates, i.e., $q$ is vacuous on the value of the non-zero entries. 
\end{itemize}

It's also straightforward to get a valid vacuous-prior IM.  Using effectively the same logic in the construction of the vacuous-prior IM in Section~\ref{SS:running}, we find that the vacuous-prior IM's plausibility contour is given by 
\[ \pi_y(\vartheta) = 1 - G_n(\|y - \vartheta\|^2), \]
where $G_n$ is a $\chisq(n)$ distribution function.  It's easy to verify that this is strongly valid in the sense of Definition~\ref{def:strong}.  

To get a partial prior-dependent IM for this sparse normal mean problem, one option is the construction in Section~\ref{SS:consonant}.  That is, the IM's plausibility contour is $\pi_{y,\credal}$ in \eqref{eq:im.tnorm} where the t-norm $\star$ is multiplication.  Fortunately, computation of the normalizing constant can be done almost entirely in closed-form for this problem.  Here's a brief explanation of how this calculation goes:
\begin{enumerate}
\item $q(\vartheta)$ is a constant value, say $q_k$, on sets where $\|\vartheta\|_0 = k$. 
\vspace{-2mm}
\item Let $\hat\theta^k$ denote the maximizer of the vacuous-prior IM contour $\pi_y(\vartheta)$ on the set $\|\vartheta\|_0 = k$.  First rank the $|y|$ values in descending order, i.e., $|y|_{i_1} > \cdots > |y|_{i_n}$.  Then set $\hat\theta_{i_j}^k = y_{i_j}$ for $j=1,\ldots,k$ and all the remaining $\hat\theta_i^k$'s equal to 0.
\vspace{-2mm}
\item Then $\sup_t\{ \pi_y(t) \, q(t)\} = \max_k \{\pi_y(\hat\theta^k) \, q_k\}$, which is easy to calculate.
\end{enumerate} 
Currently there is no fully satisfactory proof of validity for this  IM and, in particular, I have reasons (empirical and otherwise) to doubt that this IM is strongly valid.  It's possible to take this consonant but-probably-not-valid IM contour as input to the validification strategy described in Section~\ref{S:validify} above.  Computation of the validified IM is, as usual, non-trivial, but a naive Monte Carlo solution is within reach.  

Define this validified version as 
\[ \pi_{y,\credal}^\text{\sc v}(\vartheta) = \int_0^1 \sup_{\theta: q(\theta) > \alpha} \prob_{Y|\theta}\{ \pi_{Y,\credal}(\theta) \leq \pi_{y,\credal}(\vartheta)\} \, d\alpha, \]
where the right-hand side is the Choquet integral formula.  The challenge, of course, is the evaluation of the function in the integrand,
\[ \alpha \mapsto \sup_{\theta: q(\theta) > \alpha} \prob_{Y|\theta}\{ \pi_{Y,\credal}(\theta) \leq x\}, \quad \text{any fixed $x \in [0,1]$}. \]
Since $\pi_{Y,\credal}(\theta)$ can be easily evaluated using the three-step strategy above, the distribution function part of this computation isn't difficult.  The supremum is what creates problems.  The key observation, however, is that the random variable $\pi_{Y,\credal}(\theta)$, as a function of $Y \sim \prob_{Y|\theta}$, for $\theta$ such that $\|\theta\|_0 = k$, is stochastically the smallest when the $k$ non-zero entries of $\theta$ are very close to 0.  Moreover, it doesn't matter which of the entries are the small non-zero value, thanks to symmetry of the normal distribution.  These observations, of course, are specific to the present application, and wouldn't hold in general.  For the $n=2$ case considered below, the validified plausibility contour is evaluated as 
\begin{align*}
\pi_{y,\credal}^\text{\sc v}(\vartheta) & = (1-q_1) \prob_{Y|v_0} \{ \pi_{Y,\credal}(v_0) \leq \pi_{y,\credal}(\vartheta)\} \\
& \qquad + (q_1 - q_2) \prob_{Y|v_1}\{\pi_{Y,\credal}(v_1) \leq \pi_{y,\credal}(\vartheta)\} \\ 
& \qquad + q_2 \prob_{Y|v_2}\{\pi_{Y,\credal}(v_2) \leq \pi_{y,\credal}(\vartheta)\}, 
\end{align*}
where $v_0 = (0,0)$, $v_1 = (0, 0.01)$, and $v_2 = (0.01, 0.01)$, $q_k = q(v_k)$ for $k=1,2$, and the $\prob_{Y|\theta}$-probabilities are evaluated via simple Monte Carlo.

\subsection{Visualizing potential efficiency gain}

I said above that my goal in this section is a modest one.  All I want to do here is demonstrate visually the potential for efficiency gain by incorporating the partial prior described above.  For this illustration, I take $n=2$ and suppose that I've observed $y=(1,0.3)$.  In this case, since $y_2$ is relatively close to 0, the data is not incompatible with $\theta_2 = 0$.  When I incorporate the sparsity-encouraging partial prior, the resulting combined IM will push towards this simpler structure with $\theta_2=0$.  Figure~\ref{fig:sparse.contour} shows the contour plots of the plausibility contours for the vacuous-prior IM, $\pi_y$, the sparsity-encouraging prior IM, $\pi_{y,\credal}$, and its validified version $\pi_{y,\credal}^\text{\sc v}$.  In particular, the 90\% plausibility regions are highlighted in red.  The contour plot in Panels~(b) and (c) look a bit strange due to the structure in $q$.  What needs to be clarified is that the 90\% plausibility region includes line segments along the axes protruding from the ordinary-looking contours.  The key point is that the vacuous-prior IM's plausibility region is substantially larger than that of the sparsity-encouraging prior-based IMs, a direct effect of acknowledging the underlying low-dimensional structure.  It's also worth noting that the contour plots in Panels~(b) and (c) are almost identical, the only real difference is theoretical---the validified version is provably valid while the other isn't.  But it'd be worth exploring the proximity of $\pi_{y,\credal}$ to $\pi_{y,\credal}^\text{\sc v}$ in general since the former is much easier to compute than the latter.  

\begin{figure}
\begin{center}
\subfigure[vacuous-prior IM]{\scalebox{0.6}{\includegraphics{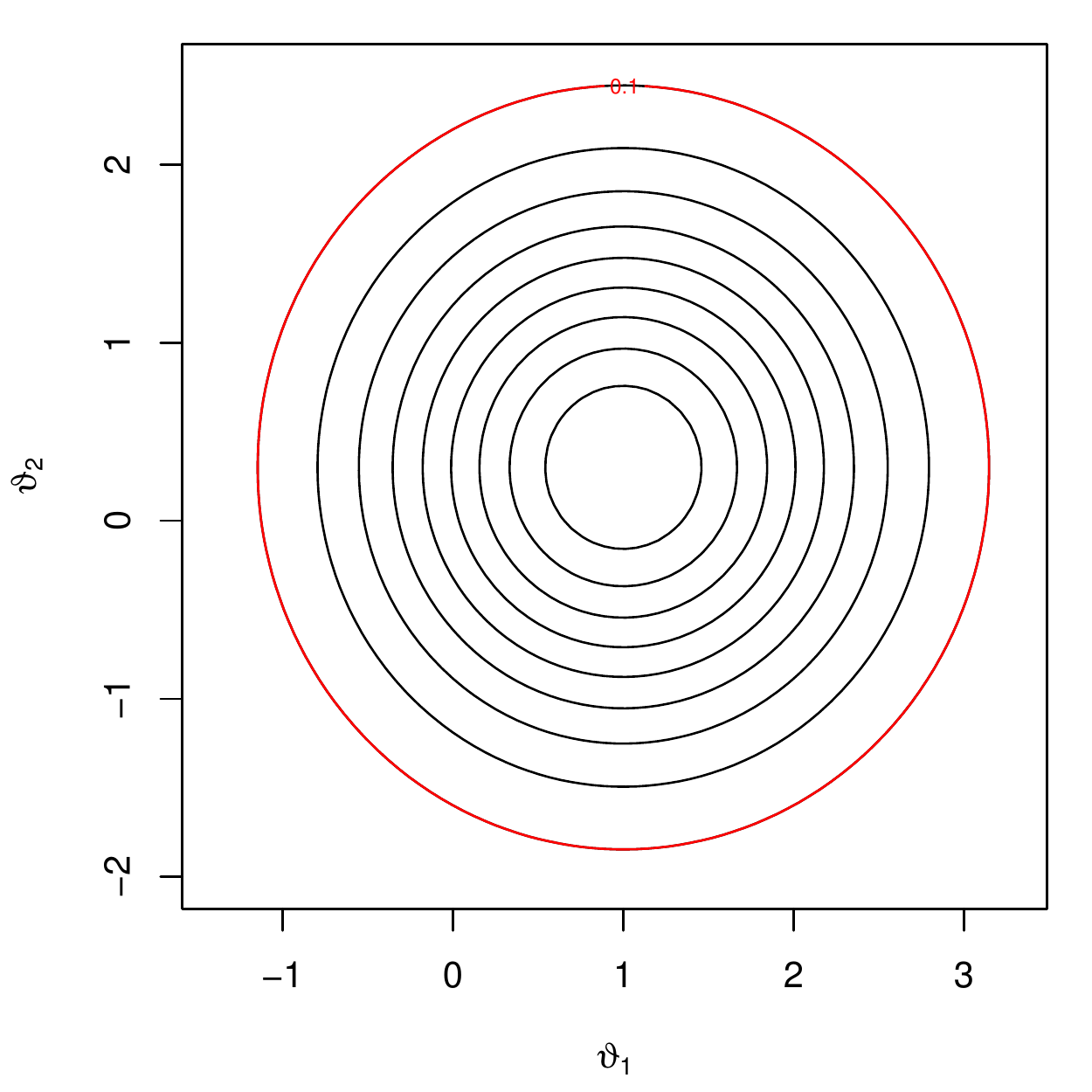}}}
\subfigure[sparsity prior consonant IM]{\scalebox{0.6}{\includegraphics{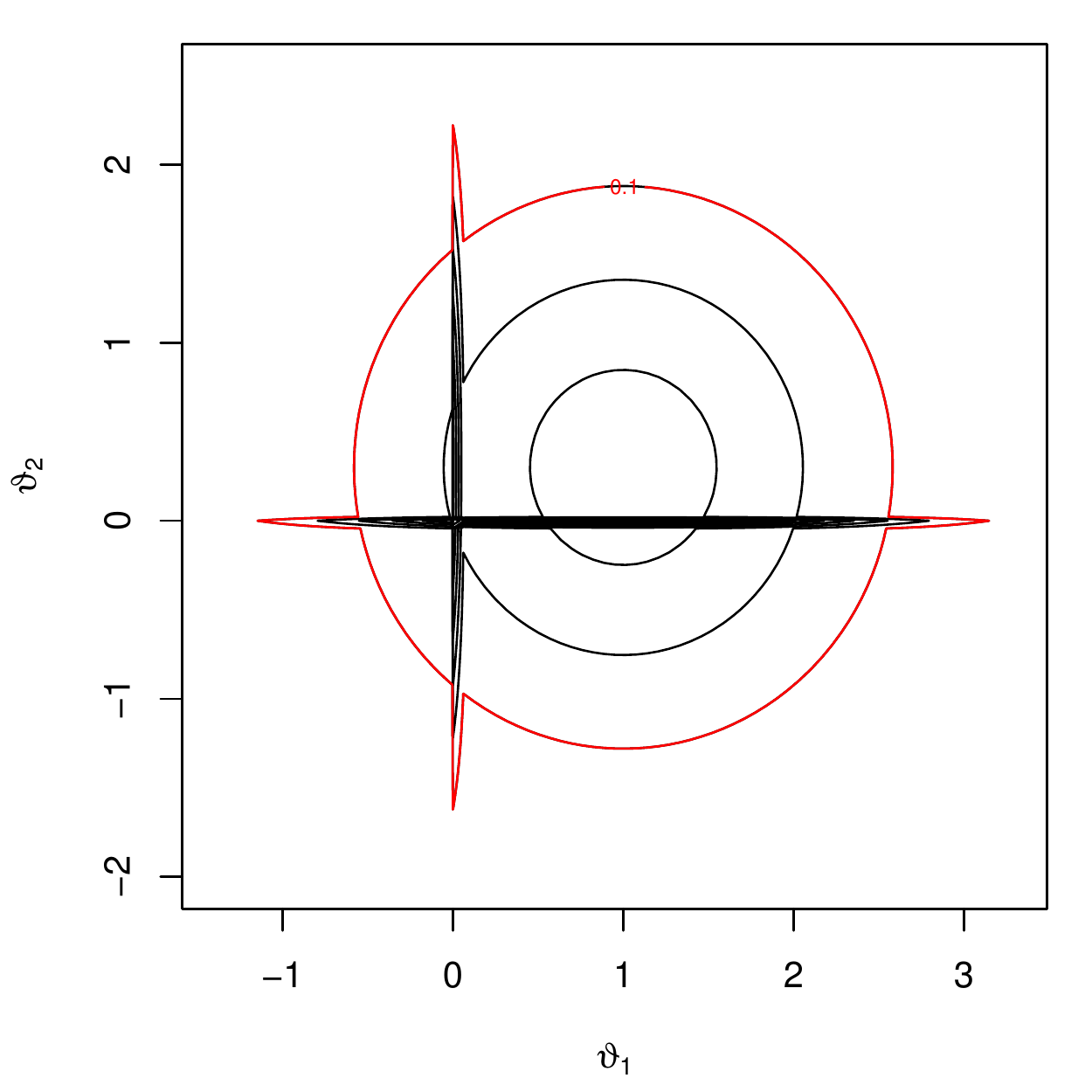}}}
\subfigure[validified sparsity prior consonant IM]{\scalebox{0.6}{\includegraphics{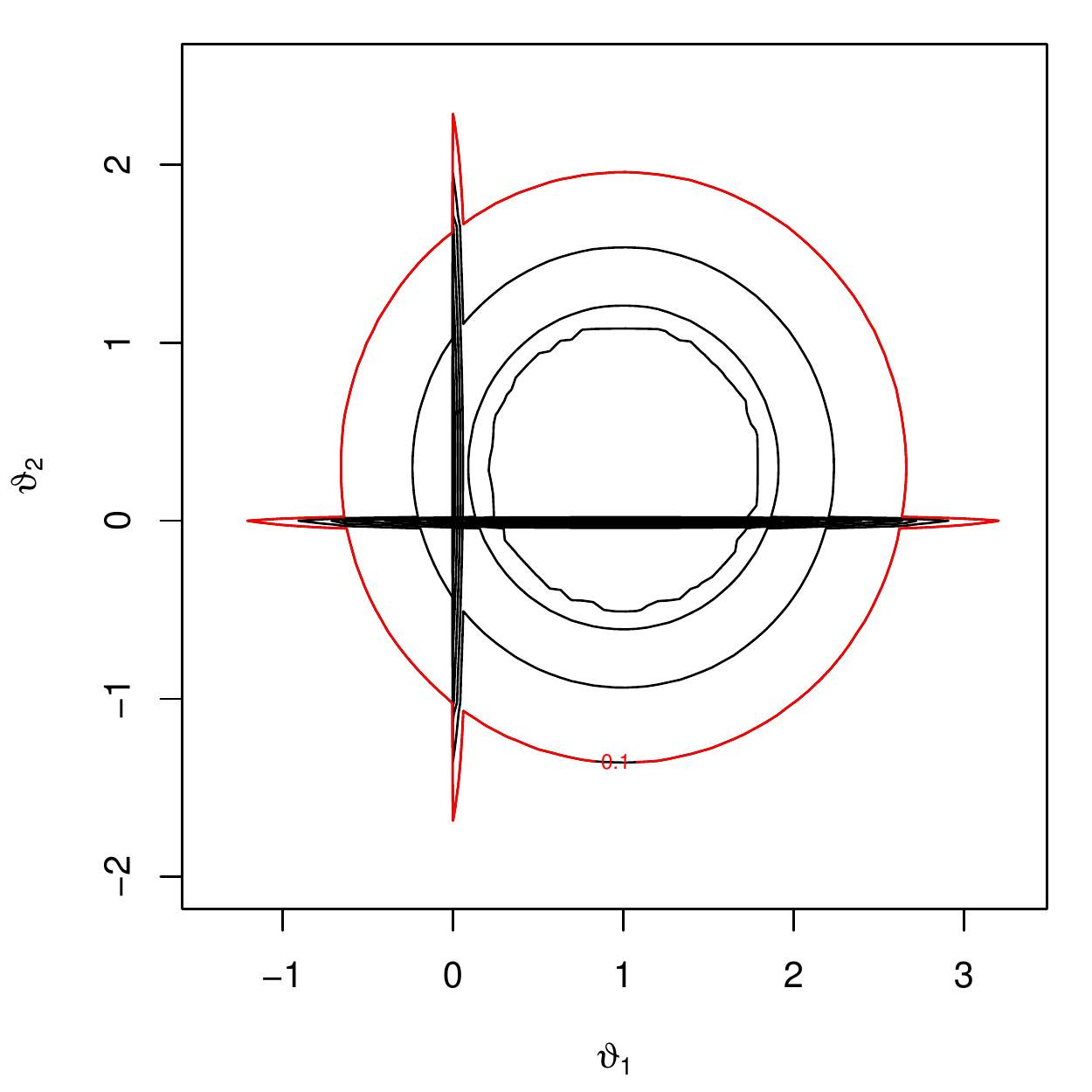}}}
\end{center}
\caption{Plots of the plausibility contour functions for $\theta=(\theta_1, \theta_2)$ based on the (a)~vacuous-prior IM, (b)~sparsity-prior consonant IM, and (c)~validified version of the IM in (b) with data $y=(1,0.3)$ and prior parameter $\varpi=0.5$. The corresponding 90\% plausibility regions are highlighted in red.}
\label{fig:sparse.contour}
\end{figure}




\section{Conclusion}
\label{S:discuss}

In this paper, I've considered a more general notion of validity for inferential models, one that covers the spectrum from no prior information, to partial prior information in the form of imprecise probabilities, to complete prior information in the form of a precise probability.  This is a substantial extension of the theory previously available in \citet{martin.nonadditive, imchar}.  I've also shown that achieving this more general notion of validity has a number of interesting and important statistical and behavioral consequences.  There are big-picture consequences thanks to the resolution of the Bayesian vs.~frequentist, two-theory problem in statistics.  But the definition and consequences are only meaningful if there's an IM construction that achieves it, and several different strategies are presented in Sections~\ref{S:achieving} and \ref{S:other}.  There are, unfortunately, two challenges.  First, the constructions for which validity is directly verifiable turn out to be unsatisfactory---they're generally less efficient than the basic, vacuous-prior IM that ignores the structural simplifications provided by the partial prior.  Second, for the constructions that tend to show a gain inefficiency compared to the vacuous-prior IM, only limited validity properties are currently available. Hope is not all lost, however, since the {\em validification} strategy presented in Section~\ref{S:validify} can be employed to take those efficient but not-currently-provably-valid and make them valid without sacrificing on efficiency.  At the end of the day, the partial prior-dependent IM that I'd currently recommend is that based on the consonance-preserving combination rule in Section~\ref{SS:consonant}, with the validification applied afterward for good measure.  But new and improved valid IM constructions have come to light.  In particular, Dominik Hose suggested to me a very general strategy for construction a (strongly) valid partial-prior-dependent IM: take the generator function $h_y$ in \eqref{eq:transform} to be  
\[ h_y(\vartheta) = L_y(\vartheta) \, q(\vartheta) / \textstyle\sup_{t \in \TT}\{ L_y(t) \, q(t)\}, \]
superficially Bayesian.  This turns out to be a powerful suggestion, with so much potential that it'll be the focus of Part~II of the series. 

There are still a number of interesting and important open question to be answered, and here I'll list a few.
\begin{itemize}
\item Strong validity for the two constructions presented in Section~\ref{S:other} may be out of reach, but ordinary validity seems possible.  If not, then it must be that the restriction \eqref{eq:dempster.restrict} of the candidate assertions is especially important or fundamental, so it'd be interesting to understand that better. 
\vspace{-2mm} 
\item The validification strategy is conceptually straightforward but computationally non-trivial.  I was able to handle this using a naive Monte Carlo strategy for the very special example in Section~\ref{SS:running}, but other examples would surely require more care.  If validification turns out to be important---that is, if the procedure in Section~\ref{SS:consonant} actually isn't valid---then general strategies for carrying out these computations would be of great practical importance.  
\vspace{-2mm} 
\item My recent work \citep{imdec} on decision-theoretic implications of validity focused exclusively on the vacuous-prior case.  Like in classical decision theory, there are limitations to what can be said when there's no prior information available.  For example, admissibility of ``optimal'' IM-based decisions is not guaranteed in the vacuous-prior case. However, I expect that the developments here in this paper on IMs with partial prior information can be leveraged to prove stronger decision-theoretic properties, e.g., I'm optimistic that results paralleling the classical admissibility of Bayes rules can be established in this IM framework. 
\vspace{-2mm} 
\item The result in Proposition~\ref{prop:dumb} provides a simple procedure for constructing tests with error rate control guarantees relative to the partial prior information.  This deserves a more thorough investigation, since, at the very least, it serves as a baseline, nominal level test against which other methods can be compared.  Furthermore, an interesting feature of this test is that it can discount an assertion that's supported by the data and not by the prior, but it can't mark up an assertion that's supported by the prior but not by the data.  As \citet{baker.truth} says ``Data doesn’t lie.~People do,''~so it's reasonable to not let the prior make an assertion that's incompatible with the data appear more plausible. 
\vspace{-2mm} 
\item The high-dimensional problems briefly discussed in Section~\ref{S:sparsity} make up an important class of applications of the new ideas here.  The consonant IM construction presented in Section~\ref{SS:consonant}, and its validified version, seems quite promising.  Indeed, at least superficially, multiplying the vacuous-prior IM plausibility function by the prior plausibility function closely resembles the Bayesian's likelihood-times-prior regularization.  Whether this particular construction is the best possible is, in my opinion, the most important of these open problems.  
\end{itemize}

The focus in this and many of my other recent papers on the topic has been on theory, but practical applications are important too.  While vacuous-prior IM solutions have been worked out for most of the standard textbook examples, there are important contributions there still to be made.  In particular, problems involving discrete data, e.g., contingency table data, are practically important and inherently more challenging.  To my knowledge, no satisfactory IM-style solution to the contingency table data problem is available.  That's one of the next tasks on my to-do list.  

I understand that the details I've presented here and elsewhere are unfamiliar, and that a lack of familiarity can be unsettling.  
Similarly, the two-theory problem and the negative impacts it has on statistics and science more generally are unsettling.  Rather than be discouraged by the difficulty of these problems or the unfamiliarity of the methods being described, I hope the reader will be encouraged by the opportunities these unsettling feelings create.  Let's ``settle the unsettling'' \citep{gong.meng.discuss}.

\section*{Acknowledgments}

This work is partially supported by the U.S.~National Science Foundation, grants DMS--1811802 and SES--2051225. 
Thanks to Michael Balch, Leonardo Cella, Harry Crane, Jasper De Bock, Dominik Hose (especially), Ruobin Gong, and Matthias Troffaes for helpful discussions, and to members of the {\em BELIEF 2021} program committee for their valuable feedback on a previous version of the manuscript.

\bibliographystyle{apalike}
\bibliography{/Users/rgmarti3/Dropbox/Research/mybib}

\begin{thebibliography}{}

\bibitem[Baker, 2017]{baker.truth}
Baker, L. (2017).
\newblock {\em Truth, {L}ies \& {S}tatistics: {H}ow to {L}ie with
  {S}tatistics}.
\newblock Independently published.

\bibitem[Balch, 2012]{balch2012}
Balch, M.~S. (2012).
\newblock Mathematical foundations for a theory of confidence structures.
\newblock {\em Internat. J. Approx. Reason.}, 53(7):1003--1019.

\bibitem[Balch et~al., 2019]{balch.martin.ferson.2017}
Balch, M.~S., Martin, R., and Ferson, S. (2019).
\newblock Satellite conjunction analysis and the false confidence theorem.
\newblock {\em Proc. Royal Soc. A}, 475(2227):2018.0565.

\bibitem[Belitser, 2017]{belitser.ddm}
Belitser, E. (2017).
\newblock On coverage and local radial rates of credible sets.
\newblock {\em Ann. Statist.}, 45(3):1124--1151.

\bibitem[Belitser and Nurushev, 2020]{belitser.nurushev.uq}
Belitser, E. and Nurushev, N. (2020).
\newblock Needles and straw in a haystack: {R}obust confidence for possibly
  sparse sequences.
\newblock {\em Bernoulli}, 26(1):191--225.

\bibitem[Berger, 2006]{berger2006}
Berger, J. (2006).
\newblock The case for objective {B}ayesian analysis.
\newblock {\em Bayesian Anal.}, 1(3):385--402.

\bibitem[Billingsley, 1995]{billingsley}
Billingsley, P. (1995).
\newblock {\em Probability and {M}easure}.
\newblock John Wiley \& Sons Inc., New York, third edition.

\bibitem[Cahoon and Martin, 2020]{immeta}
Cahoon, J. and Martin, R. (2020).
\newblock A generalized inferential model for meta-analyses based on few
  studies.
\newblock {\em Statistics and Applications}, 18(2):299--316.

\bibitem[Cahoon and Martin, 2021]{imcens}
Cahoon, J. and Martin, R. (2021).
\newblock Generalized inferential models for censored data.
\newblock {\em Internat. J. Approx. Reason.}, 137:51--66.

\bibitem[Cella and Martin, 2019]{imexpert}
Cella, L. and Martin, R. (2019).
\newblock Incorporating expert opinion in an inferential model while
  maintaining validity.
\newblock In De~Bock, J., {de Campos}, C.~P., {de Cooman}, G., Quaeghebeur, E.,
  and Wheeler, G., editors, {\em Proceedings of the Eleventh International
  Symposium on Imprecise Probabilities: Theories and Applications}, volume 103
  of {\em Proceedings of Machine Learning Research}, pages 68--77, Thagaste,
  Ghent, Belgium. PMLR.

\bibitem[Cella and Martin, 2021]{impred.isipta}
Cella, L. and Martin, R. (2021).
\newblock Valid inferential models for prediction in supervised learning
  problems.
\newblock In Cano, A., De~Bock, J., Miranda, E., and Moral, S., editors, {\em
  Proceedings of the Twelveth International Symposium on Imprecise Probability:
  Theories and Applications}, volume 147 of {\em Proceedings of Machine
  Learning Research}, pages 72--82. PMLR.

\bibitem[Cella and Martin, 2022a]{cella.martin.imrisk}
Cella, L. and Martin, R. (2022a).
\newblock Direct and approximately valid probabilistic inference on a class of
  statistical functionals.
\newblock {\em Internat. J. Approx. Reason.}, 151:205--224.

\bibitem[Cella and Martin, 2022b]{imconformal}
Cella, L. and Martin, R. (2022b).
\newblock Validity, consonant plausibility measures, and conformal prediction.
\newblock {\em Internat. J. Approx. Reason.}, 141:110--130.

\bibitem[Couso et~al., 2001]{cuoso.etal.2001}
Couso, I., Montes, S., and Gil, P. (2001).
\newblock The necessity of the strong {$\alpha$}-cuts of a fuzzy set.
\newblock {\em Internat. J. Uncertain. Fuzziness Knowledge-Based Systems},
  9(2):249--262.

\bibitem[Crane, 2018]{crane.fpp}
Crane, H. (2018).
\newblock The fundamental principle of probability: {R}esolving the replication
  crisis with skin in the game.
\newblock \url{https://researchers.one/articles/18.08.00013}.

\bibitem[Cuzzolin, 2021]{cuzzolin.book}
Cuzzolin, F. (2021).
\newblock {\em The {G}eometry of {U}ncertainty}.
\newblock Artificial Intelligence: Foundations, Theory, and Algorithms.
  Springer, Cham.

\bibitem[de~Finetti, 1937]{definetti1937}
de~Finetti, B. (1937).
\newblock La pr\'{e}vision : ses lois logiques, ses sources subjectives.
\newblock {\em Ann. Inst. H. Poincar\'{e}}, 7(1):1--68.

\bibitem[Dempster, 1968a]{dempster1968a}
Dempster, A.~P. (1968a).
\newblock A generalization of {B}ayesian inference. ({W}ith discussion).
\newblock {\em J. Roy. Statist. Soc. Ser. B}, 30:205--247.

\bibitem[Dempster, 1968b]{dempster1968b}
Dempster, A.~P. (1968b).
\newblock Upper and lower probabilities generated by a random closed interval.
\newblock {\em Ann. Math. Statist.}, 39:957--966.

\bibitem[Dempster, 2008]{dempster2008}
Dempster, A.~P. (2008).
\newblock The {D}empster--{S}hafer calculus for statisticians.
\newblock {\em Internat. J. Approx. Reason.}, 48(2):365--377.

\bibitem[Den{\oe}ux and Li, 2018]{denoeux.li.2018}
Den{\oe}ux, T. and Li, S. (2018).
\newblock Frequency-calibrated belief functions: review and new insights.
\newblock {\em Internat. J. Approx. Reason.}, 92:232--254.

\bibitem[Destercke and Dubois, 2014]{destercke.dubois.2014}
Destercke, S. and Dubois, D. (2014).
\newblock Special cases.
\newblock In {\em Introduction to {I}mprecise {P}robabilities}, Wiley Ser.
  Probab. Stat., pages 79--92. Wiley, Chichester.

\bibitem[Destercke et~al., 2009]{destercke.etal.2009}
Destercke, S., Dubois, D., and Chojnacki, E. (2009).
\newblock A consonant approximation of the product of independent consonant
  random sets.
\newblock {\em Internat. J. Uncertain. Fuzziness Knowledge-Based Systems},
  17(6):773--792.

\bibitem[Donoho and Johnstone, 1994]{donohojohnstone1994b}
Donoho, D.~L. and Johnstone, I.~M. (1994).
\newblock Minimax risk over {$l\sb p$}-balls for {$l\sb q$}-error.
\newblock {\em Probab. Theory Related Fields}, 99(2):277--303.

\bibitem[Doob, 1949]{doob1949}
Doob, J.~L. (1949).
\newblock Application of the theory of martingales.
\newblock In {\em Le {C}alcul des {P}robabilit\'es et ses {A}pplications},
  Colloques Internationaux du Centre National de la Recherche Scientifique, no.
  13, pages 23--27. Centre National de la Recherche Scientifique, Paris.

\bibitem[Dubois, 2006]{dubois2006}
Dubois, D. (2006).
\newblock Possibility theory and statistical reasoning.
\newblock {\em Comput. Statist. Data Anal.}, 51(1):47--69.

\bibitem[Dubois et~al., 2004]{dubois.etal.2004}
Dubois, D., Foulloy, L., Mauris, G., and Prade, H. (2004).
\newblock Probability-possibility transformations, triangular fuzzy sets, and
  probabilistic inequalities.
\newblock {\em Reliab. Comput.}, 10(4):273--297.

\bibitem[Dubois et~al., 2016]{dubois.fusion.2016}
Dubois, D., Liu, W., Ma, J., and Prade, H. (2016).
\newblock The basic principles of uncertain information fusion. an organised
  review of merging rules in different representation frameworks.
\newblock {\em Inf. Fus.}, 32:12--39.

\bibitem[Dubois and Prade, 1986]{dubois.prade.1986}
Dubois, D. and Prade, H. (1986).
\newblock The principle of minimum specificity as a basis for evidential
  reasoning.
\newblock In {\em Proceedings of International Conference on Information
  Processing and Management of Uncertainty in Knowledge-based Systems}, pages
  75--84. Springer.

\bibitem[Dubois and Prade, 1988a]{dubois.prade.book}
Dubois, D. and Prade, H. (1988a).
\newblock {\em Possibility {T}heory}.
\newblock Plenum Press, New York.

\bibitem[Dubois and Prade, 1988b]{dubois.prade.1988}
Dubois, D. and Prade, H. (1988b).
\newblock Representation and combination of uncertainty with belief functions
  and possibility measures.
\newblock {\em Comput. Intell.}, 4(3):244--264.

\bibitem[Efron, 2013a]{efron2013.ams}
Efron, B. (2013a).
\newblock A 250-year argument: belief, behavior, and the bootstrap.
\newblock {\em Bull. Amer. Math. Soc. (N.S.)}, 50(1):129--146.

\bibitem[Efron, 2013b]{efron.cd.discuss}
Efron, B. (2013b).
\newblock Discussion: ``{C}onfidence distribution, the frequentist distribution
  estimator of a parameter: a review'' [mr3047496].
\newblock {\em Int. Stat. Rev.}, 81(1):41--42.

\bibitem[Ermini~Leaf and Liu, 2012]{leafliu2012}
Ermini~Leaf, D. and Liu, C. (2012).
\newblock Inference about constrained parameters using the elastic belief
  method.
\newblock {\em Internat. J. Approx. Reason.}, 53(5):709--727.

\bibitem[Fisher, 1933]{fisher1933}
Fisher, R.~A. (1933).
\newblock The concepts of inverse probability and fiducial probability
  referring to unknown parameters.
\newblock {\em Proc. R. Soc. Lond. A.}, 139:343--348.

\bibitem[Fisher, 1935]{fisher1935a}
Fisher, R.~A. (1935).
\newblock The fiducial argument in statistical inference.
\newblock {\em Ann. Eugenics}, 6:391--398.

\bibitem[Fisher, 1973]{fisher1973}
Fisher, R.~A. (1973).
\newblock {\em Statistical Methods and Scientific Inference}.
\newblock Hafner Press, New York, 3rd edition.

\bibitem[Fraser, 1968]{fraser1968}
Fraser, D. A.~S. (1968).
\newblock {\em The {S}tructure of {I}nference}.
\newblock John Wiley \& Sons Inc., New York.

\bibitem[Fraser, 2011a]{fraser2011}
Fraser, D. A.~S. (2011a).
\newblock Is {B}ayes posterior just quick and dirty confidence?
\newblock {\em Statist. Sci.}, 26(3):299--316.

\bibitem[Fraser, 2011b]{fraser2011.rejoinder}
Fraser, D. A.~S. (2011b).
\newblock Rejoinder: ``{I}s {B}ayes posterior just quick and dirty
  confidence?''.
\newblock {\em Statist. Sci.}, 26(3):329--331.

\bibitem[Fraser, 2014]{fraser.copss}
Fraser, D. A.~S. (2014).
\newblock Why does statistics have two theories?
\newblock In Lin, X., Genest, C., Banks, D.~L., Molenberghs, G., Scott, D.~W.,
  and Wang, J.-L., editors, {\em Past, Present, and Future of Statistical
  Science}, chapter~22. Chapman \& Hall/CRC Press.

\bibitem[Ghosal and van~der Vaart, 2017]{ghosal.vaart.book}
Ghosal, S. and van~der Vaart, A. (2017).
\newblock {\em Fundamentals of Nonparametric {B}ayesian Inference}, volume~44
  of {\em Cambridge Series in Statistical and Probabilistic Mathematics}.
\newblock Cambridge University Press, Cambridge.

\bibitem[Gong and Meng, 2021]{gong.meng.update}
Gong, R. and Meng, X.-L. (2021).
\newblock Judicious judgment meets unsettling updating: dilation, sure loss and
  {S}impson's paradox.
\newblock {\em Statist. Sci.}, 36(2):169--190.

\bibitem[Gr\"unwald, 2018]{grunwald.safe}
Gr\"unwald, P. (2018).
\newblock Safe probability.
\newblock {\em J. Statist. Plann. Inference}, 195:47--63.

\bibitem[Hannig et~al., 2016]{hannig.review}
Hannig, J., Iyer, H., Lai, R. C.~S., and Lee, T. C.~M. (2016).
\newblock Generalized fiducial inference: a review and new results.
\newblock {\em J. Amer. Statist. Assoc.}, 111(515):1346--1361.

\bibitem[Hastie et~al., 2009]{hastie.tibshirani.friedman.2009}
Hastie, T., Tibshirani, R., and Friedman, J. (2009).
\newblock {\em The {E}lements of {S}tatistical {L}earning}.
\newblock Springer-Verlag, New York, 2nd edition.

\bibitem[Hose, 2022]{hose2022thesis}
Hose, D. (2022).
\newblock {\em Possibilistic {R}easoning with {I}mprecise {P}robabilities:
  {S}tatistical {I}nference and {D}ynamic {F}iltering}.
\newblock PhD thesis, University of Stuttgart.

\bibitem[Hose and Hanss, 2021]{hose.hanss.2021}
Hose, D. and Hanss, M. (2021).
\newblock A universal approach to imprecise probabilities in possibility
  theory.
\newblock {\em Internat. J. Approx. Reason.}, 133:133--158.

\bibitem[Kohlas and Monney, 1995]{kohlas.monney.hints}
Kohlas, J. and Monney, P.-A. (1995).
\newblock {\em A {M}athematical {T}heory of {H}ints}, volume 425 of {\em
  Lecture Notes in Economics and Mathematical Systems}.
\newblock Springer-Verlag, Berlin.

\bibitem[Kyburg, 1987]{kyburg1987}
Kyburg, Jr., H.~E. (1987).
\newblock Bayesian and non-{B}ayesian evidential updating.
\newblock {\em Artificial Intelligence}, 31(3):271--293.

\bibitem[Lewis, 1980]{lewis1980}
Lewis, D. (1980).
\newblock A subjectivist's guide to objective chance.
\newblock In Jeffrey, R.~C., editor, {\em Studies in {I}nductive {L}ogic and
  {P}robability}, volume~II, pages 263--293. University of California Press,
  Berkeley.

\bibitem[Lewis, 1994]{lewis1994}
Lewis, D. (1994).
\newblock Humean supervenience debugged.
\newblock {\em Mind}, 103:473--490.

\bibitem[Li, 1989]{li1989}
Li, K.-C. (1989).
\newblock Honest confidence regions for nonparametric regression.
\newblock {\em Ann. Statist.}, 17(3):1001--1008.

\bibitem[Liu and Martin, 2021a]{gong.meng.discuss}
Liu, C. and Martin, R. (2021a).
\newblock Comment: {S}ettle the unsettling---an inferential models perspective
  [mr4255191].
\newblock {\em Statist. Sci.}, 36(2):196--200.

\bibitem[Liu and Martin, 2021b]{imposs}
Liu, C. and Martin, R. (2021b).
\newblock Inferential models and possibility measures.
\newblock {\em Handbook of Bayesian, Fiducial, and Frequentist Inference}, to
  appear; {\tt arXiv:2008.06874}.

\bibitem[Martin, 2015]{plausfn}
Martin, R. (2015).
\newblock Plausibility functions and exact frequentist inference.
\newblock {\em J. Amer. Statist. Assoc.}, 110(512):1552--1561.

\bibitem[Martin, 2018]{gim}
Martin, R. (2018).
\newblock On an inferential model construction using generalized associations.
\newblock {\em J. Statist. Plann. Inference}, 195:105--115.

\bibitem[Martin, 2019]{martin.nonadditive}
Martin, R. (2019).
\newblock False confidence, non-additive beliefs, and valid statistical
  inference.
\newblock {\em Internat. J. Approx. Reason.}, 113:39--73.

\bibitem[Martin, 2021a]{imchar}
Martin, R. (2021a).
\newblock An imprecise-probabilistic characterization of frequentist
  statistical inference.
\newblock \url{https://researchers.one/articles/21.01.00002}.

\bibitem[Martin, 2021b]{imdec}
Martin, R. (2021b).
\newblock Inferential models and the decision-theoretic implications of the
  validity property.
\newblock \url{https://researchers.one/articles/21.12.00005}.

\bibitem[Martin et~al., 2021]{prsa.response}
Martin, R., Balch, M., and Ferson, S. (2021).
\newblock Response to the comment `{C}onfidence in confidence distributions!'.
\newblock {\em Proc. R. Soc. A.}, 477:20200579.

\bibitem[Martin and Liu, 2013]{imbasics}
Martin, R. and Liu, C. (2013).
\newblock Inferential models: a framework for prior-free posterior
  probabilistic inference.
\newblock {\em J. Amer. Statist. Assoc.}, 108(501):301--313.

\bibitem[Martin and Liu, 2014]{impval}
Martin, R. and Liu, C. (2014).
\newblock A note on p-values interpreted as plausibilities.
\newblock {\em Statist. Sinica}, 24(4):1703--1716.

\bibitem[Martin and Liu, 2015a]{imcond}
Martin, R. and Liu, C. (2015a).
\newblock Conditional inferential models: combining information for prior-free
  probabilistic inference.
\newblock {\em J. R. Stat. Soc. Ser. B. Stat. Methodol.}, 77(1):195--217.

\bibitem[Martin and Liu, 2015b]{imbook}
Martin, R. and Liu, C. (2015b).
\newblock {\em Inferential {M}odels}, volume 147 of {\em Monographs on
  Statistics and Applied Probability}.
\newblock CRC Press, Boca Raton, FL.

\bibitem[Martin and Liu, 2015c]{immarg}
Martin, R. and Liu, C. (2015c).
\newblock Marginal inferential models: prior-free probabilistic inference on
  interest parameters.
\newblock {\em J. Amer. Statist. Assoc.}, 110(512):1621--1631.

\bibitem[Martin and Walker, 2019]{martin.walker.deb}
Martin, R. and Walker, S.~G. (2019).
\newblock Data-dependent priors and their posterior concentration rates.
\newblock {\em Electron. J. Stat.}, 13(2):3049--3081.

\bibitem[Miller, 2018]{miller.doob}
Miller, J.~W. (2018).
\newblock A detailed treatment of {D}oob's theorem.
\newblock {\tt arXiv:1801.03122}.

\bibitem[Molchanov, 2005]{molchanov2005}
Molchanov, I. (2005).
\newblock {\em Theory of Random Sets}.
\newblock Probability and Its Applications (New York). Springer-Verlag London
  Ltd., London.

\bibitem[Nuzzo, 2014]{nuzzo2014}
Nuzzo, R. (2014).
\newblock Scientific method: Statistical errors.
\newblock {\em Nature}, 506:150--152.

\bibitem[Schweder and Hjort, 2016]{schweder.hjort.book}
Schweder, T. and Hjort, N.~L. (2016).
\newblock {\em Confidence, {L}ikelihood, {P}robability}, volume~41 of {\em
  Cambridge Series in Statistical and Probabilistic Mathematics}.
\newblock Cambridge University Press, New York.

\bibitem[Seidenfeld, 1992]{seidenfeld1992}
Seidenfeld, T. (1992).
\newblock R. {A}. {F}isher's fiducial argument and {B}ayes' theorem.
\newblock {\em Statist. Sci.}, 7(3):358--368.

\bibitem[Shafer, 1976]{shafer1976}
Shafer, G. (1976).
\newblock {\em A Mathematical Theory of Evidence}.
\newblock Princeton University Press, Princeton, N.J.

\bibitem[Shafer, 2021]{shafer.betting}
Shafer, G. (2021).
\newblock Testing by betting: a strategy for statistical and scientific
  communication.
\newblock {\em J. Roy. Statist. Soc. Ser. A}, 184(2):407--478.

\bibitem[Troffaes and de~Cooman, 2014]{lower.previsions.book}
Troffaes, M. C.~M. and de~Cooman, G. (2014).
\newblock {\em Lower {P}revisions}.
\newblock Wiley Series in Probability and Statistics. John Wiley \& Sons, Ltd.,
  Chichester.

\bibitem[Walley, 1991]{walley1991}
Walley, P. (1991).
\newblock {\em Statistical {R}easoning with {I}mprecise {P}robabilities},
  volume~42 of {\em Monographs on Statistics and Applied Probability}.
\newblock Chapman \& Hall Ltd., London.

\bibitem[Walley, 2002]{walley2002}
Walley, P. (2002).
\newblock Reconciling frequentist properties with the likelihood principle.
\newblock {\em J. Statist. Plann. Inference}, 105(1):35--65.

\bibitem[Wasserstein and Lazar, 2016]{wasserstein.lazar.asa}
Wasserstein, R.~L. and Lazar, N.~A. (2016).
\newblock The {ASA}'s statement on {$p$}-values: context, process, and purpose
  [{E}ditorial].
\newblock {\em Amer. Statist.}, 70(2):129--133.

\bibitem[Wasserstein et~al., 2019]{wasserstein.schirm.lazar.2019}
Wasserstein, R.~L., Schirm, A.~L., and Lazar, N.~A. (2019).
\newblock Moving to a world beyond ``{$p <0.05$}'' [{E}ditorial].
\newblock {\em Amer. Statist.}, 73(suppl. 1):1--19.

\bibitem[Xie and Singh, 2013]{xie.singh.2012}
Xie, M. and Singh, K. (2013).
\newblock Confidence distribution, the frequentist distribution estimator of a
  parameter: a review.
\newblock {\em Int. Stat. Rev.}, 81(1):3--39.

\bibitem[Zabell, 1992]{zabell1992}
Zabell, S.~L. (1992).
\newblock R. {A}. {F}isher and the fiducial argument.
\newblock {\em Statist. Sci.}, 7(3):369--387.

\bibitem[Zadeh, 1965]{zadeh1965}
Zadeh, L.~A. (1965).
\newblock Fuzzy sets.
\newblock {\em Information and Control}, 8:338--353.

\bibitem[Zadeh, 1978]{zadeh1978}
Zadeh, L.~A. (1978).
\newblock Fuzzy sets as a basis for a theory of possibility.
\newblock {\em Fuzzy Sets and Systems}, 1(1):3--28.

\end{thebibliography}

\end{document}